\let\pa\partial  
\let\na\nabla  
\let\eps\varepsilon  
\newcommand{\N}{{\mathbb N}}  
\newcommand{\R}{{\mathbb R}} 
\newcommand{\diver}{\operatorname{div}}  
\newcommand{\dom}{{\mathcal O}}
\newcommand{\T}{{\mathcal L}}
\newcommand{\F}{{\mathcal F}}
\newcommand{\B}{{\mathcal B}}
\newcommand{\Law}{{\mathcal L}}
\newcommand{\Prob}{\mathbb{P}}
\newcommand{\E}{\mathbb{E}}
\newcommand{\Fil}{\mathbb{F}}
\definecolor{darkblue}{rgb}{0.1,0.1,0.9}
\newtheorem{theorem}{Theorem}   
\newtheorem{lemma}[theorem]{Lemma}   
\newtheorem{proposition}[theorem]{Proposition}   
\newtheorem{remark}[theorem]{Remark}   
\newtheorem{corollary}[theorem]{Corollary}  
\newtheorem{definition}{Definition}
\begin{document}  

\title[Stochastic population cross-diffusion system]{Global martingale solutions 
for a stochastic population cross-diffusion system}

\author[G. Dhariwal]{Gaurav Dhariwal}
\address{Institute for Analysis and Scientific Computing, Vienna University of  
	Technology, Wiedner Hauptstra\ss e 8--10, 1040 Wien, Austria}
\email{gaurav.dhariwal@tuwien.ac.at} 

\author[A. J\"ungel]{Ansgar J\"ungel}
\address{Institute for Analysis and Scientific Computing, Vienna University of  
	Technology, Wiedner Hauptstra\ss e 8--10, 1040 Wien, Austria}
\email{juengel@tuwien.ac.at} 

\author[N. Zamponi]{Nicola Zamponi}
\address{Institute for Analysis and Scientific Computing, Vienna University of  
	Technology, Wiedner Hauptstra\ss e 8--10, 1040 Wien, Austria}
\email{nicola.zamponi@tuwien.ac.at} 

\date{\today}

\thanks{The authors acknowledge partial support from   
the Austrian Science Fund (FWF), grants F65, I3401, P27352, P30000, and W1245} 

\begin{abstract}
The existence of global nonnegative martingale solutions to a 
stochastic cross-diffusion system for an arbitrary but finite
number of interacting population species is shown.
The random influence of the environment is modeled by a multiplicative noise term.
The diffusion matrix is generally neither symmetric nor positive definite,
but it possesses a quadratic entropy structure. This structure allows us to 
work in a Hilbert space framework and to apply a stochastic Galerkin method.
The existence proof is based on energy-type estimates, the tightness criterion of
Brze\'zniak and co-workers, and Jakubowski's generalization of the 
Skorokhod theorem. The nonnegativity is proved by an extension of Stampacchia's
truncation method due to Chekroun, Park, and Temam.
\end{abstract}

\keywords{Shigesada-Kawasaki-Teramoto model, population dynamics, martingale solutions,
tightness, Skorokhod-Jakubowski Theorem, stochastic maximum principle.}  
 
\subjclass[2000]{60H15, 35R60, 60J10, 92D25}  

\maketitle


\section{Introduction}

The dynamics of interacting population species can be described macroscopically
by cross-diffusion equations. A well-known model example is the deterministic
Shigesada-Kawasaki-Teramoto population system \cite{SKT79}. It can be derived
formally from a random-walk model on lattices for transition rates
which depend linearly on the population densities \cite[Appendix A]{ZaJu17}. 
Generalized population cross-diffusion models are obtained when the dependence
of the transition rates on the densities is nonlinear. 
The existence of global weak solutions to these deterministic models was proved 
for an arbitrary number of species in \cite{CDJ18}. 
In this paper, we allow for a random influence of the environment 
and prove the existence of global nonnegative martingale solutions 
to the corresponding stochastic cross-diffusion system. 

More precisely, we consider the cross-diffusion equations 
\begin{equation}\label{1.eq}
  du_i - \diver\bigg(\sum_{j=1}^n A_{ij}(u)\na u_j\bigg)dt 
	= \sum_{j=1}^n\sigma_{ij}(u)dW_j(t)
	\quad\mbox{in }\dom,\ t>0,\ i=1,\ldots,n,
\end{equation}
with no-flux boundary and initial conditions
\begin{equation}\label{1.bic}
  \sum_{j=1}^n A_{ij}(u)\na u_j\cdot\nu = 0\quad\mbox{on }\pa\dom,\ t>0, \quad
	u_i(0)=u_i^0\quad\mbox{in }\dom,\ i=1,\ldots,n,
\end{equation}
where $\dom\subset\R^d$ with $d=2,3$ is a bounded domain with Lipschitz 
boundary, $\nu$ is the exterior unit normal vector to $\pa\dom$, and $u_i^0$ is
a possibly random initial datum.
The solution $u=(u_1,\ldots,u_n):\dom\times[0,T]\times\Omega\to\R^n$
models the density of the $i^{\text{th}}$ population species, where $x\in\dom$
represents the spatial variable, $t\in(0,T)$ the time, and $\omega\in\Omega$
the stochastic variable. The matrix $A(u)=(A_{ij}(u))$ is the diffusion matrix,
$\sigma_{ij}(u)$ is a multiplicative noise term, 
and $W=(W_1,\ldots,W_n)$ is an $n$-dimensional cylindrical Wiener process.
Details on the stochastic framework will be given in section \ref{sec.frame}.

The diffusion coefficients are given by
\begin{equation}\label{1.A}
  A_{ij}(u) = \delta_{ij}\bigg(a_{i0}+\sum_{k=1}^n a_{ik}u_k^2\bigg) + 2a_{ij}u_iu_j, 
	\quad i,j=1,\ldots,n,
\end{equation}
where $a_{i0}>0$ and $a_{ij}>0$. 
This model is derived from an on-lattice model with transition rates
$p_i(u)$, which depend quadratically on the densities, i.e.\
$p_i(u)=a_{i0}+\sum_{k=1}^n a_{ik}u_k^2$ for $i=1,\ldots,n$ \cite{ZaJu17}.
This quadratic structure is essential for our analysis. To understand this, we need to
explain the entropy structure of equations \eqref{1.eq}.

\subsection{Entropy structure}

Generally, the diffusion matrix in \eqref{1.eq}, originating from general transition 
rates in the lattice model, is neither symmetric nor
positive definite which significantly complicates the analysis. However, the
equations possess a formal gradient-flow or entropy structure under certain conditions.
For the sake of simplicity, we sketch this structure in the deterministic context
only and refer to \cite[Chapter~4]{Jue16} for details.
By entropy structure, we mean that there exists a so-called entropy density 
$h:\R_+^n\to\R$ such that, still in the deterministic context, system \eqref{1.eq}
in the entropy variables $w_i:=\pa h/\pa u_i$, $i=1,\ldots,n$,
has a positive semi-definite diffusion matrix $B=(B_{ij})$,
\begin{equation}\label{1.B}
  \pa_t u_i(w) - \diver\bigg(\sum_{j=1}^n B_{ij}\na w_j\bigg) = 0,
\end{equation}
where $B=A(u)h''(u)^{-1}$ is the product of $A(u)$
and the inverse of the Hessian of $h(u)$, and $u(w)=(h')^{-1}(w)$ is the
back transformation. When the transition rates are given by
$p_i(u)=a_{i0}+\sum_{k=1}^n a_{ik}u_k^s$ for some $s\ge 1$, the entropy
density can be chosen as $h(u)=\sum_{i=1}^n \pi_i h_s(u_i)ds$, where
$\pi_i>0$ are some numbers and
$$
  h_s(z) = \left\{\begin{array}{ll}
	z(\log z-1)+1 &\quad\mbox{for } s=1, \\
	z^s/s &\quad\mbox{for }s\neq 1.
	\end{array}\right.
$$
It was shown in \cite{ChJu04} that $B=(B_{ij})$ in \eqref{1.B} is positive 
semi-definite in the two-species case $n=2$ with $\pi_1=\pi_2=1$. This property 
generally does not hold for the $n$-species system. It turns out \cite{CDJ18} 
that $B$ is symmetric, positive semi-definite if the numbers $\pi_i$ are chosen 
such that
$$
  \pi_i a_{ij} = \pi_j a_{ji} \quad\mbox{for all }i,j=1,\ldots,n.
$$
This condition is recognized as the detailed-balance condition
for the Markov chain associated to $(a_{ij})$ and $(\pi_1,\ldots,\pi_n)$ is the
reversible measure. The detailed-balance condition is sufficient but 
not necessary for the positive semi-definiteness of $B$; 
in fact, when self-diffusion dominates
cross-diffusion (see \eqref{1.eta2} for the precise statement) then $B$ is still
positive semi-definite.

The entropy structure also yields a priori estimates. Indeed, let
$H(u)=\int_\dom h(u)dx$ be the so-called entropy.
A computation shows that, still in the absence of the stochastic term,
$$
  \frac{dH}{dt} + \int_\dom\sum_{i,j=1}^n 
	\frac{\pa^2 h}{\pa u_i\pa u_j}(u)A_{ij}(u)\na u_i\cdot\na u_j dx = 0.
$$
Since $B=A(u)h''(u)^{-1}$ is positive semi-definite, this holds true for
$h''(u)A(u)$. Thus, taking into account the special structure of $A(u)$, this
yields gradient estimates (see Lemma \ref{lem.pd} below). 

The gradient-flow structure is the key of the analysis of the
deterministic analog to \eqref{1.eq}, but there are severe difficulties in the
stochastic context. Indeed, neither semigroup techniques \cite{DaZa14,Kru14} 
nor monotonicity arguments \cite{LiRo13} can be applied because of the
properties of the differential operator in \eqref{1.eq}. Stochastic Galerkin methods 
usually work in Hilbert spaces, and generally they cannot
be used since the transformation to entropy variables is nonlinear. 
In order to overcome
these difficulties, we consider quadratic transition rates with $s=2$ which 
makes the transformation to entropy variable linear, 
$$
  w_i = \frac{\pa h}{\pa u_i} = \pi_i h_2'(u_i) = \pi_i u_i.
$$
Still, the diffusion matrix $A(u)$ is not positive definite, but the new diffusion
matrix $B=A(u)\operatorname{diag}(1/\pi_1,\ldots,1/\pi_n)$ is positive semi-definite;
see Lemma \ref{lem.pd}.
This allows us to combine entropy methods for diffusive equations and
stochastic techniques.

\subsection{State of the art}

Before stating our main existence result, let us review the literature.
Fundamental results on stochastic partial differential equations of monotone type
were obtained already in the 1970s by Pardoux \cite{Par76}. More recently,
abstract stochastic evolution equations with locally monotone nonlinearities
\cite{LiRo13} or maximal monotone operators \cite{BaRo18} were
analyzed. The existence of (mild or pathwise strong) solutions to 
quasilinear stochastic evolution equations was proved in, e.g., \cite{DeSt04,HoZh17}.
For these solutions, the driving noise is given in advance. A weaker concept is
given by martingale solutions, where the stochastic basis is unknown a priori
and is given as part of the solution. Existence proofs of such
solutions to nonlinear stochastic evolution equations can be found in 
\cite{BrGa99,ChGo95}. 

Stochastic reaction-diffusion equations are a special class of evolution equations, and
they are investigated in many papers starting from the 1980s \cite{Fla91,FoMi86}. 
There are less results on {\em systems} of stochastic reaction-diffusion equations. 
In \cite{Cer03}, the existence and uniqueness of mild solutions with Lipschitz
continuous multiplicative noise was shown. The result was generalized in \cite{Kun15}
to H\"older continuous multiplicative noise. The existence of maximal pathwise
solutions to stochastic reaction-diffusion systems with polynomial reaction terms
was proved in \cite{NgPh16}. 
More general quasilinear systems were investigated recently in
\cite{KuNe18}, proving the existence of local pathwise mild solutions,
including the Shigesada-Kawasaki-Teramoto cross-diffusion system. The local-in-time
results are not surprising since even in the deterministic case, certain
reaction terms may lead to finite-time blow-up of solutions.
The work \cite{MaMa02} also analyzes population systems
and provides the existence of pathwise unique solutions, but only for two species 
and for Lipschitz continuous nonlinearities. 

Up to our knowledge, 
the population model \eqref{1.eq} with coefficients \eqref{1.A} was not studied
in the literature. In this paper, we prove the existence of global martingale
solutions using the techniques of \cite{BrMo14,BrOn10}. 
We show that the solutions are nonnegative under a natural
condition on the operators $\sigma_{ij}(u)$ using the stochastic maximum principle
of \cite{CPT16}. Since even the uniqueness of weak solutions to the deterministic
analog of \eqref{1.eq}-\eqref{1.A} is not known (see the partial result in
\cite{JuZa16}), we cannot expect to obtain pathwise unique strong solutions.

\subsection{Stochastic framework and main results}\label{sec.frame}

Let $(\Omega,\F,\Prob)$ be a probability space endowed with a complete
right continuous filtration $\Fil=(\F_t)_{t\ge 0}$ and let $H$ be a Hilbert space.
The space $L^2(\dom)$ is the vector space of all square integrable functions
$u:\dom\to\R$ with the inner product $(\cdot,\cdot)_{L^2(\dom)}$.
We fix a Hilbert basis $(e_k)_{k\in\N}$ of $L^2(\dom)$.
The space $L^2(\Omega;H)$ consists of all $H$-valued random variables $u$ with
$$
  \E\|u\|_H^2 := \int_\Omega\|u(\omega)\|_H^2 \Prob(d\omega) < \infty.
$$
Furthermore, the space $H^1(\dom)$ contains all functions $u\in L^2(\dom)$
such that the distributional derivatives $\pa u/\pa x_1,\ldots,\pa u/\pa x_d$ 
belong to $L^2(\dom)$.
Let $Y$ be any separable Hilbert space with orthonormal basis
$(\eta_k)_{k\in\N}$. We denote by 
$$
  \T_2(Y;L^2(\dom)) = \bigg\{L:Y\to L^2(\dom)\mbox{ linear continuous: }
  \sum_{k=1}^\infty\|L\eta_k\|_{L^2(\dom)}^2<\infty\bigg\}
$$
the space of Hilbert-Schmidt operators from $Y$ to $L^2(\dom)$ 
endowed with the norm
$$
  \|L\|_{\T_2(Y;L^2(\dom))}^2 := \sum_{k=1}^\infty\|L\eta_k\|_{L^2(\dom)}^2.
$$

Let $(\beta_{jk})_{j=1,\cdots,n,\, k\in\N}$ be a sequence of independent one-dimensional
Brownian motions and for $j=1,\ldots,n$, let
$W_j(x,t,\omega)=\sum_{k\in\N}\eta_k(x)\beta_{jk}(t,\omega)$ 
be a cylindrical Brownian motion. If $Y_0\supset Y$ is a second auxiliary Hilbert space
such that the map $Y\ni u\mapsto u\in Y_0$ is Hilbert-Schmidt, the series
$W_j=\sum_{k\in\N}\eta_k\beta_{jk}$ converges in $\T_2(\Omega;Y_0)$.

The multiplicative noise terms 
$\sigma := \sigma_{ij}(u,t,\omega):L^2(\dom)\times[0,T]\times\Omega 
\to \T_2(Y; L^2(\dom))$
are assumed to be $\B(L^2(\dom)\otimes[0,T]\otimes\F;\B(\T_2(Y;L^2(\dom))))$-measurable 
and $\Fil$-adapted with the property that 
there exists one constant $C_\sigma>0$ such that for all $u$, $v\in L^2(\dom)$
and $i,j=1,\ldots,n$,
\begin{equation}\label{1.sigma}
\begin{aligned}
  \|\sigma_{ij}(u)\|_{\T_2(Y;L^2(\dom))}^2 
	&\le C_\sigma\big(1+\|u\|^2_{L^2(\dom)}\big), \\
  \|\sigma_{ij}(u)-\sigma_{ij}(v)\|_{\T_2(Y;L^2(\dom))}^2 
	&\le C_\sigma\|u-v\|^2_{L^2(\dom)}.
\end{aligned}
\end{equation}
Here, the $L^2(\dom)$ norm of the function $u=(u_1,\ldots,u_n)$ is understood as
$\|u\|_{L^2(\dom)}^2=\sum_{i=1}^n\|u_i\|_{L^2(\dom)}^2$, and we use this notation
also for other vector-valued or tensor-valued functions.
The expression $\sigma_{ij}(u)dW_j(t)$ formally means that
\begin{equation}\label{1.sigmadW}
  \sigma_{ij}(u)dW_j(t) 
	= \sum_{k,\ell\in\N}\sigma_{ij}^{k\ell}(u)e_\ell d\beta_{jk}(t), \quad\mbox{where }
	\sigma_{ij}^{k\ell}(u) := \big(\sigma_{ij}(u)\eta_k,e_\ell\big)_{L^2(\dom)}.
\end{equation}

Next, we define our concept of solution.

\begin{definition}
Let $T>0$ be arbitrary.
We say that the system $(\widetilde U,\widetilde W,\widetilde u)$ is a {\em global
martingale solution} to \eqref{1.eq}-\eqref{1.A}
if $\widetilde U=(\widetilde\Omega,\widetilde\F,\widetilde\Prob,\widetilde\Fil)$
is a stochastic basis with filtration $\widetilde\Fil=(\widetilde\F_t)_{t\in[0,T]}$,
$\widetilde W$ is a cylindrical Wiener process, and 
$\widetilde u(t)=(\widetilde u_1(t),\ldots,\widetilde u_n(t))$ 
is an $\widetilde\F_t$-adapted stochastic process for all $t\in[0,T]$ such that
for all $i=1,\ldots,n$,
$$
  \widetilde u_i\in L^2(\widetilde\Omega;C^0([0,T];L^2_w(\dom)))\cap
	L^2(\widetilde\Omega;L^2(0,T;H^1(\dom))),
$$
the law of $\widetilde u_i(0)$ is the same as for $u^0_i$, 
and $\widetilde u$ satisfies for all $\phi\in H^1(\dom)$ and all $i=1,\ldots,n$,
\begin{align*}
  (\widetilde u_i(t),\phi)_{L^2(\dom)}
	&= (\widetilde u_{i}(0),\phi)_{L^2(\dom)} 
	- \sum_{j=1}^n\int_0^t\big\langle\diver\big(A_{ij}(\widetilde u(s))
	\na\widetilde u_j(s)\big),\phi\big\rangle ds \\
  &\phantom{xx}{}+ \bigg(\sum_{j=1}^n\int_0^t\sigma_{ij}(\widetilde u(s))
	d\widetilde W_j(s),\phi\bigg)_{L^2(\dom)}.
\end{align*}
\end{definition}

The brackets $\langle\cdot,\cdot\rangle$ signify the duality pairing between 
$H^1(\dom)'$ and $H^1(\dom)$, i.e.
$$
  \big\langle\diver\big(A_{ij}(\widetilde u)
	\na\widetilde u_j\big),\phi\big\rangle
	= -\int_\dom A_{ij}(\widetilde u)	\na\widetilde u_j\cdot\na\phi dx.
$$
As mentioned before, the new diffusion matrix $B$ in \eqref{1.B} is
positive definite only under an additional assumption, namely either
\begin{align}
  & \pi_ia_{ij} = \pi_ja_{ji}\mbox{ for }i\neq j\quad\mbox{and}\quad
	\alpha_1 := \min_{i=1,\ldots,n}\bigg(a_{ii}-\frac13\sum_{j=1,\,j\neq i}^n a_{ij}
	\bigg) > 0, \quad\mbox{or} \label{1.eta1} \\
  & \alpha_2 := \min_{i=1,\ldots,n}\bigg(a_{ii} - \frac13\sum_{j=1,\,j\neq i}^n
	\big((a_{ij}+a_{ji}) - 2\sqrt{a_{ij}a_{ji}}\big)\bigg) > 0. \label{1.eta2}
\end{align}
Our main result is as follows.

\begin{theorem}[Existence of global martingale solution]\label{thm.ex}
Let $T>0$ be arbitrary, $d\le 3$, and $u_0\in L^2(\dom)$.
Let $\sigma=(\sigma_{ij})_{i,j=1}^n$ 
with $\sigma_{ij}:L^2(\dom)\times[0,T]\times\Omega\to\T_2(Y;L^2(\dom))$
satisfy \eqref{1.sigma}, $a_{i0}>0$, $a_{ij}>0$ for $i,j=1,\ldots,n$, 
and let either \eqref{1.eta1} or \eqref{1.eta2} hold. 
Then there exists a global martingale solution to \eqref{1.eq}-\eqref{1.A}.
If additionally, $u_i^0\ge 0$ a.e.\ in $\dom$, $\Prob$-a.s.\ for $i=1,\ldots,n$ and
\begin{equation}\label{1.sigma2}
  \sum_{j=1}^n\|\sigma_{ij}(u)\|_{\T_2(Y;L^2(\dom))} \le C\|u_i\|_{L^2(\dom)},
\end{equation}
then the population densities are nonnegative $\Prob$-a.s.
\end{theorem}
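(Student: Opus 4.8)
The plan is to construct a solution by a stochastic Galerkin approximation in $L^2(\dom)$, to derive a priori estimates from the quadratic entropy $h(u)=\tfrac12\sum_{i=1}^n\pi_iu_i^2$ (for which the entropy variable $w_i=\pi_iu_i$ is linear), to pass to the limit by a tightness/Skorokhod argument, and finally to prove nonnegativity a posteriori on the limiting process. For the Galerkin step, let $P_N$ be the $L^2(\dom)$-orthogonal projection onto $H_N:=\operatorname{span}(e_1,\dots,e_N)$ (with $e_k\in H^1(\dom)$) and look for $u^N=(u^N_1,\dots,u^N_n)$ with $u^N_i(t)\in H_N$ solving, for all $k\le N$ and $i=1,\dots,n$,
\[
  d(u^N_i,e_k)_{L^2(\dom)} = -\sum_{j=1}^n\big(A_{ij}(u^N)\na u^N_j,\na e_k\big)_{L^2(\dom)}\,dt + \sum_{j=1}^n\big(\sigma_{ij}(u^N)\,dW_j,e_k\big)_{L^2(\dom)},
\]
with $u^N_i(0)=P_Nu^0_i$. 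By \eqref{1.A} the drift is polynomial in the coordinates $(u^N_i,e_k)$ and by \eqref{1.sigma} the diffusion coefficient is Lipschitz, so this finite-dimensional It\^o system has a unique local strong solution; global existence follows from the next estimate.

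Applying It\^o's formula to $H(u^N(t))=\tfrac12\sum_{i=1}^n\pi_i\|u^N_i(t)\|_{L^2(\dom)}^2$, the drift term produces $-\int_\dom\sum_{i,j=1}^n\pi_iA_{ij}(u^N)\na u^N_i\cdot\na u^N_j\,dx$, which by Lemma \ref{lem.pd} and either \eqref{1.eta1} or \eqref{1.eta2} is bounded above by $-c\sum_{i=1}^n\big(\|\na u^N_i\|_{L^2(\dom)}^2+\|\na\big((u^N_i)^2\big)\|_{L^2(\dom)}^2\big)$ for some $c>0$, while the It\^o correction $\tfrac12\sum_{i,j=1}^n\pi_i\|P_N\sigma_{ij}(u^N)\|_{\T_2(Y;L^2(\dom))}^2$ is at most $C(1+\|u^N\|_{L^2(\dom)}^2)$ by \eqref{1.sigma}. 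Taking expectations and using Gronwall's lemma gives $\sup_N\E\sup_{t\le T}\|u^N(t)\|_{L^2(\dom)}^2<\infty$ and $\sup_N\E\int_0^T\sum_i\big(\|u^N_i\|_{H^1(\dom)}^2+\|\na\big((u^N_i)^2\big)\|_{L^2(\dom)}^2\big)\,dt<\infty$, and applying It\^o's formula to $\|u^N\|_{L^2(\dom)}^{2p}$ together with the Burkholder--Davis--Gundy inequality yields $\sup_N\E\sup_{t\le T}\|u^N(t)\|_{L^2(\dom)}^{2p}<\infty$ for every $p\ge1$. Because $d\le3$, these bounds make $A_{ij}(u^N)\na u^N_j$ bounded in $L^1(\Omega\times(0,T);L^{3/2}(\dom))$ (with all moments), so the drift part of $u^N_i$ lies in a bounded subset of $W^{1,1}(0,T;(W^{1,3}(\dom))')$ while the stochastic part is bounded in $L^2(\Omega;W^{\theta,2}(0,T;L^2(\dom)))$ for every $\theta<\tfrac12$.

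The uniform bounds together with the compactness criterion of \cite{BrMo14,BrOn10} show that the laws of $(u^N)$ are tight on $\mathcal Z:=C^0([0,T];L^2_w(\dom))\cap L^2(0,T;L^2(\dom))\cap L^2_w(0,T;H^1(\dom))$, while the law of the fixed cylindrical Wiener process is tight on $C^0([0,T];Y_0)$. As the topology of $\mathcal Z$ admits a countable family of continuous functionals separating points, Jakubowski's generalization of the Skorokhod theorem provides a probability space $(\widetilde\Omega,\widetilde\F,\widetilde\Prob)$ carrying $(\widetilde u^N,\widetilde W^N)$ with the same laws as $(u^N,W)$ and converging $\widetilde\Prob$-a.s.\ in $\mathcal Z\times C^0([0,T];Y_0)$ to some $(\widetilde u,\widetilde W)$; with $\widetilde\Fil$ the normal filtration generated by $(\widetilde u,\widetilde W)$, the process $\widetilde W$ is a cylindrical Wiener process over $\widetilde\Fil$ by the L\'evy characterization. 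Passing to the limit in the weak formulation is routine for the linear terms; for the diffusion term one uses the $\widetilde\Prob$-a.s.\ strong convergence $\widetilde u^N\to\widetilde u$ in $L^2(0,T;L^2(\dom))$ together with the uniform moments (Vitali's theorem) to get $A_{ij}(\widetilde u^N)\na\phi\to A_{ij}(\widetilde u)\na\phi$ strongly in $L^2(\widetilde\Omega\times(0,T);L^2(\dom))$ for $\phi$ in a dense subset of $H^1(\dom)$, which against the weak convergence $\na\widetilde u^N_j\rightharpoonup\na\widetilde u_j$ identifies the limit of $(A_{ij}(\widetilde u^N)\na\widetilde u^N_j,\na\phi)_{L^2(\dom)}=(\na\widetilde u^N_j,A_{ij}(\widetilde u^N)\na\phi)_{L^2(\dom)}$ and, by equi-integrability, inside the expectation; the stochastic integral passes to the limit by the standard convergence lemma for It\^o integrals of \cite{BrMo14} and \eqref{1.sigma}. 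This produces a global martingale solution $(\widetilde U,\widetilde W,\widetilde u)$ with $\widetilde U=(\widetilde\Omega,\widetilde\F,\widetilde\Prob,\widetilde\Fil)$; the regularity $\widetilde u_i\in L^2(\widetilde\Omega;C^0([0,T];L^2_w(\dom))\cap L^2(0,T;H^1(\dom)))$ follows from the a priori bounds by lower semicontinuity and Fatou's lemma, and $\widetilde u_i(0)$ has the law of $u^0_i$ since $P_Nu^0_i\to u^0_i$ in $L^2(\dom)$. The main obstacle here is the identification of the nonlinear diffusion term, which needs strong $L^2(0,T;L^2(\dom))$-convergence of $\widetilde u^N$ with enough integrability in $\omega$: this is available precisely because the entropy estimate provides the extra bound on $\na\big((u^N_i)^2\big)$ and because $d\le3$ keeps the relevant Sobolev products subcritical.

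For the nonnegativity statement, assume in addition $u^0_i\ge0$ a.e.\ in $\dom$, $\Prob$-a.s., for all $i$, and \eqref{1.sigma2}, and work on the stochastic basis just constructed. Following \cite{CPT16}, apply their extension of It\^o's formula --- which compensates the lack of $C^2$-regularity of the functional by a Stampacchia-type truncation at level $\eps>0$ and the limit $\eps\to0$ --- to $\Phi(v):=\tfrac12\|v^-\|_{L^2(\dom)}^2$ along $t\mapsto\widetilde u_i(t)$; since $\widetilde u_i(0)=u^0_i\ge0$ we have $\Phi(\widetilde u_i(0))=0$. The diffusion term contributes $-\int_{\{\widetilde u_i<0\}}\sum_{j=1}^nA_{ij}(\widetilde u)\na\widetilde u_i\cdot\na\widetilde u_j\,dx$, whose diagonal part $-\int_{\{\widetilde u_i<0\}}A_{ii}(\widetilde u)|\na\widetilde u_i|^2\,dx\le0$ is dissipative (with $A_{ii}(\widetilde u)\ge a_{i0}>0$), and whose off-diagonal parts, which on $\{\widetilde u_i<0\}$ carry the factor $\widetilde u_i=-\widetilde u_i^-$ coming from $A_{ij}=2a_{ij}\widetilde u_i\widetilde u_j$, are controlled by this dissipation together with the conditions on the $a_{ij}$ so as to leave only a remainder bounded by $C\|\widetilde u_i^-\|_{L^2(\dom)}^2$, exactly as in the deterministic nonnegativity analysis. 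The It\^o correction from the noise equals $\sum_{j=1}^n\int_{\{\widetilde u_i<0\}}\sum_{k\in\N}\big|(\sigma_{ij}(\widetilde u)\eta_k)(x)\big|^2\,dx$, which by \eqref{1.sigma2} is also bounded by $C\|\widetilde u_i^-\|_{L^2(\dom)}^2$: the localization to $\{\widetilde u_i<0\}$ combined with \eqref{1.sigma2} is precisely what renders the noise harmless where $\widetilde u_i$ vanishes. Taking expectations yields $\E\Phi(\widetilde u_i(t))\le C\int_0^t\E\Phi(\widetilde u_i(s))\,ds$, and Gronwall's lemma forces $\E\Phi(\widetilde u_i(t))=0$, i.e.\ $\widetilde u_i\ge0$ a.e.\ in $\dom\times(0,T)$, $\widetilde\Prob$-a.s., for every $i$. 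The delicate points in this last step are the rigorous justification of the It\^o formula for the non-smooth $\Phi$ and the sign bookkeeping for the cross-diffusion contribution; both are handled by the method of \cite{CPT16} and the structure of $A$.
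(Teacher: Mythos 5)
Your existence argument follows essentially the same route as the paper: Galerkin approximation in $H_N$, It\^o's formula for the quadratic entropy combined with the positive definiteness of $PA(u)$ from Lemma \ref{lem.pd}, higher moments via Burkholder--Davis--Gundy, tightness on a topological space of the type \eqref{3.Z}, the Skorokhod--Jakubowski theorem, and identification of the nonlinear drift by pairing the strong $L^2(0,T;L^2(\dom))$ convergence (plus equi-integrability in $\omega$, via Vitali and the Gagliardo--Nirenberg bound on $(u^{(N)})^2$) against the weak convergence of $\na\widetilde u_j^{(N)}$. Your replacement of the Aldous condition by fractional time regularity of the drift and noise parts is a standard variant and unproblematic; that half of the proposal is fine.

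The nonnegativity part, however, has a genuine gap. You run the Stampacchia--It\^o argument of \cite{CPT16} \emph{a posteriori} on the limit process $\widetilde u$ of the untruncated system, and you claim that on $\{\widetilde u_i<0\}$ the off-diagonal contributions $-2a_{ij}\int_{\{\widetilde u_i<0\}}\widetilde u_i\widetilde u_j\,\na\widetilde u_i\cdot\na\widetilde u_j\,dx$ are ``controlled by the dissipation and the conditions on the $a_{ij}$'' up to a remainder $C\|\widetilde u_i^-\|_{L^2(\dom)}^2$, ``exactly as in the deterministic analysis.'' This is not justified and does not go through with the available regularity: absorbing $\na\widetilde u_i$ into the diagonal dissipation $a_{i0}\int_{\{\widetilde u_i<0\}}|\na\widetilde u_i|^2dx$ by Young's inequality leaves $\int_{\{\widetilde u_i<0\}}(\widetilde u_i^-)^2\,\widetilde u_j^2|\na\widetilde u_j|^2dx$, which involves the gradient of the \emph{other} species and is only known to be integrable in $x$ (since $u_j\na u_j\in L^2$), not bounded pointwise; it cannot be estimated by $C\|\widetilde u_i^-\|_{L^2(\dom)}^2$, and conditions \eqref{1.eta1}--\eqref{1.eta2} do not help here because $\psi_\eps(u_i)$ destroys the symmetric quadratic-form structure that Lemma \ref{lem.pd} exploits. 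There is no ``deterministic nonnegativity analysis'' of this naive form for such cross-diffusion systems either. The paper's proof circumvents exactly this obstruction by a different construction: the off-diagonal coefficients are truncated, $A_{ij}^+(u)=\delta_{ij}(a_{i0}+\sum_k a_{ik}u_k^2)+2a_{ij}u_i^+u_j$, the whole Galerkin/limit procedure is carried out for the truncated system, and in the It\^o identity \eqref{3.Feps} the off-diagonal term \emph{vanishes identically} because $\psi_\eps(u_i^{(N)})$ is supported in $\{u_i^{(N)}\le0\}$ while $A_{ij}^+(u^{(N)})=0$ there; only then do the noise term (via \eqref{1.sigma2}) and Gronwall give $\widetilde u_i\ge0$, and since $A^+=A$ on nonnegative states the limit solves the original system. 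A second, related weakness of your version is that you apply a CPT-type It\^o formula for the non-smooth functional $\Phi(v)=\tfrac12\|v^-\|_{L^2(\dom)}^2$ directly to the limiting martingale solution, whose drift only lies in a negative Sobolev space; the paper instead applies the finite-dimensional It\^o formula at the Galerkin level and passes to the limits $N\to\infty$ and then $\eps\to0$. Without the coefficient truncation (or some substitute idea), the step bounding the cross-diffusion contribution fails, so the nonnegativity claim is not established by your argument.
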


\begin{remark}[Discussion of the assumptions]\label{rem.assump}\rm
(i) We can also choose random initial data, see Remark \ref{rem.ic}.
We need additionally that $\E\|u^0\|_{L^2(\dom)}^p<\infty$ for $p=24/(4-d)$.
This condition is needed to derive a higher-order estimate for $u_i$.
It can be weakened to smaller values of $p$ by refining the Gagliardo-Nirenberg 
argument in the proof of Lemma \ref{lem.3}.

(ii) Assumption \eqref{1.sigma} on $\sigma_{ij}$ seems to be quite natural.
In \cite{Kun15}, the multiplicative noise was assumed to be only 
H\"older continuous, but the matrix $(\sigma_{ij}(u))$ is needed to be diagonal,
which we do not assume. Condition \eqref{1.sigma2} implies that 
$\sum_{j=1}^n\sigma_{ij}(u)=0$ if $u_i=0$, which is a natural condition to obtain
the nonnegativity of $u_i$.

(iii) The existence of solutions to the deterministic version of 
\eqref{1.eq}-\eqref{1.A} can be shown also for
vanishing coefficients $a_{i0}=0$ \cite{CDJ18}. This seems to be not
possible in the stochastic framework, since the condition $a_{i0}>0$ is needed
to derive estimates for $\na u_i$ in $L^2(\dom)$ $\Prob$-a.s., and these
estimates are necessary to work in the Hilbert space $H^1(\dom)$.

(iv) Conditions \eqref{1.eta1} and \eqref{1.eta2} on the matrix coefficients are
probably not optimal. For local-in-time existence of solutions to the determinstic
analog of \eqref{1.eq}, only the positivity of the real parts of the eigenvalues
of $A(u)$ is needed \cite{Ama90}. This condition is generally not sufficient
to ensure global solvability. A sufficient condition for the global existence
for general quasilinear
evolution equations is provided by uniform $W^{1,p}(\dom)$ bounds with $p>d$ 
\cite[Theorem 15.3]{Ama93}, but it is difficult to prove this regularity for
solutions to cross-diffusion systems. Conditions \eqref{1.eta1} 
and \eqref{1.eta2} are currently the best available assumptions to guarantee 
the existence of global solutions, even in the deterministic framework. 
\qed
\end{remark}

\subsection{Ideas of the proof of Theorem \ref{thm.ex}}

We sketch the main steps of the proof. The full proof is given in section \ref{sec.ex}.
First, we show the existence of a pathwise unique strong solution $u^{(N)}$ to a
stochastic Galerkin approximation of \eqref{1.eq}-\eqref{1.A}, 
where $N\in\N$ is the Galerkin dimension. 
Estimates uniform in $N$ are derived from a stochastic version of the entropy inequality
(which is made rigorous using It\^o's formula in section \ref{sec.unif})
\begin{align*}
  &\E H(u^{(N)}(t)) - \E H(u^{(N)}(0))
	+ \sum_{i,j=1}^n\E\int_0^t\int_\dom \pi_iA_{ij}(u^{(N)})\na u_i^{(N)}\cdot
	\na u_j^{(N)} dxds \\
	&\le \frac12\E\int_0^t\|P^{1/2}\Pi_N\sigma(u^{(N)})\|_{\T_2(Y;L^2(\dom))}^2 ds
	+ \sum_{i,j=1}^n\E\int_0^t\int_\dom\pi_i u^{(N)}_i\sigma_{ij}(u^{(N)})dW_j(s) dx,
\end{align*}
where $\Pi_N$ is the projection on the finite-dimensional Galerkin space, 
$$
  H(u) = \sum_{i=1}^n\int_\dom \pi_i h_2(u_i)dx 
	= \sum_{i=1}^n\frac{\pi_i}{2}\int_\dom u_i^2 dx = \frac12\|P^{1/2}u\|_{L^2(\dom)}^2
$$
is the quadratic entropy, and $P=\operatorname{diag}(\pi_1,\ldots,\pi_n)$,
$P^{1/2}=\operatorname{diag}(\pi_1^{1/2},\ldots,\pi_n^{1/2})$.
Since $PA(u^{(N)})$ is positive definite, the last term on the left-hand side
yields uniform gradient estimates. The first integral on the right-hand side
is bounded from above by the entropy $H$ (up to some additive constant), 
using assumption \eqref{1.sigma}, and the second integral is estimated using
the Burkholder-Davis-Gundy inequality (see Proposition \ref{prop.bdg} in the appendix).

Next, the tightness of the laws $\Law(u^{(N)})$ in the topological space $Z_T$,
defined in \eqref{3.Z} below,
is proved by applying a criterion of Brze\'zniak, Goldys, and Jegaraj \cite{BGJ13}.
Because of the low regularity properties of the solutions, $Z_T$
cannot be chosen to be a metric space and we cannot apply the Skorokhod representation
theorem, as usually done in the literature (e.g.\ \cite{DHV16,NgPh16}).  
This problem is overcome by using 
Jakubowski's generalization of the Skorokhod theorem,
which holds for topological spaces with a separating-points property
(Theorem \ref{thm.skoro}).
Then there exists a subsequence of $(u^{(N)})$ (not relabeled), another probability space,
and random variables $(\widetilde u^{(N)},\widetilde W^{(N)})$ having the same law
as $(u^{(N)},W)$ and $(\widetilde u^{(N)},\widetilde W^{(N)})$ converges
to $(\widetilde u,\widetilde W)$ in the topology of $Z_T$. 
Because of the gradient estimates,
we conclude in particular the strong convergence $\widetilde u^{(N)}\to
\widetilde u$ in $L^2(\dom\times(0,T))$ $\Prob$-a.s. This, together with
further convergences resulting from the relative compactness in $Z_T$,
allows us to pass to the limit $N\to\infty$ in the Galerkin approximation,
showing that $(\widetilde u,\widetilde W)$ is a global martingale solution
to \eqref{1.eq}. 

From the application viewpoint, 
we expect that the population densities $u_i(t)$ are nonnegative 
$\Prob$-a.s.\ if this holds initially. The problem is that
generally, maximum principle arguments cannot be applied to
cross-diffusion systems. System \eqref{1.eq}, \eqref{1.A}, however, possesses a special
structure. Indeed, we may write \eqref{1.eq} as
$$
  du_i - \diver\bigg(\bigg(a_{i0}+\sum_{k=1}^n a_{ik}u_k^2\bigg)\na u_i 
	+ u_iF_i[u]\bigg) = \sum_{j=1}^n\sigma_{ij}(u)dW_j(t),
$$
and $F_i$ depends on $u_j$ and $\na u_j$ for $j\neq i$. The term $u_iF_i[u]$
can be interpreted as a drift term which vanishes if $u_i=0$. If we assume that
$\sigma_{ij}(u)=0$ if $u_i=0$ then a maximum principle can be applied. 

More precisely, we employ the stochastic Stampacchia-type maximum principle 
due to Chekroun, Park, and Temam \cite{CPT16}. 
The idea is to regularize the test function 
$(\widetilde u_i^{(N)})^-=\max\{0,-\widetilde u_i^{(N)}\}$
by some smooth function $F_\eps(\widetilde u_i^{(N)})$, to apply the It\^o formula for 
$\E\int F_\eps(\widetilde u_i^{(N)})dx$, and then to pass to the limits $N\to\infty$ and
$\eps\to 0$ leading to the inequality
$$
  \E\|\widetilde u_i(t)^-\|_{L^2(\dom)}^2 
	\le \E\int_0^t\|\widetilde u_i(s)^-\|_{L^2(\dom)}^2 ds.
$$
Gronwall's lemma show that $\widetilde u_i(t)^-=0$ a.e.\ in $\dom$,
which proves the nonnegativity of $\widetilde u_i$ $\Prob$-a.s.

In order to make the manuscript accessible also to non-experts of stochastic
partial differential equations, we recall some known results from stochastic analysis used
in this paper in Appendix \ref{app}. As the tightness criterion of \cite{BGJ13}
is probably less known, we present the details directly in the proof of Theorem
\ref{thm.ex} in section \ref{sec.tight}.


\section{Proof of the existence theorem}\label{sec.ex}

\subsection{An algebraic property}\label{sec.alg}

We recall the following result on the positive definite\-ness of the
new diffusion matrix, taken from \cite[Lemma 3]{CDJ18} by choosing $s=2$. 

\begin{lemma}\label{lem.pd}
Let $\pi_1,\ldots,\pi_n>0$
and $P=\operatorname{diag}(\pi_1,\ldots,\pi_n)\in\R^{n\times n}$. 
Let either condition \eqref{1.eta1} or \eqref{1.eta2} hold.
Then $PA(u)$ is positive definite, i.e., 
it holds for any $z=(z_1,\ldots,z_n)\in\R^n$ and $u=(u_1,\ldots,u_n)\in\R^n$,
$$
  \sum_{i,j=1}^n \pi_iA_{ij}(u)z_iz_j
	\ge \sum_{i=1}^n\pi_i a_{i0}z_i^2 + 3\alpha\sum_{i=1}^n \pi_i u_i^2z_i^2,
$$
where $\alpha=\alpha_1$ if \eqref{1.eta1} holds and 
$\alpha=\alpha_2$ if \eqref{1.eta2} is
satisfied. In the latter case, we may choose $\pi_i=1$ for all $i=1,\ldots,n$.
\end{lemma}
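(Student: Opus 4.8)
Since the statement is exactly the $s=2$ instance of \cite[Lemma~3]{CDJ18}, it would be enough to cite that result; for completeness I would still include the short self-contained computation. Starting from \eqref{1.A} and reindexing the term $\sum_k a_{ik}u_k^2$, one gets
\[
  \sum_{i,j=1}^n \pi_i A_{ij}(u)z_iz_j
  = \sum_{i=1}^n \pi_i a_{i0}z_i^2
  + \sum_{i,j=1}^n \pi_i a_{ij}\big(u_j^2 z_i^2 + 2u_iu_jz_iz_j\big).
\]
The first sum is already the first term in the claim. In the second sum the diagonal indices $i=j$ contribute exactly $3\sum_{i=1}^n \pi_i a_{ii}(u_iz_i)^2$, the ``self-diffusion reservoir'' that must absorb the off-diagonal cross terms. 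So it remains to bound $\sum_{i\neq j}\pi_i a_{ij}(u_j^2 z_i^2 + 2u_iu_jz_iz_j)$ from below, and I would do this by grouping it over unordered pairs $\{i,j\}$: the $\{i,j\}$-summand involves only the four scalars $u_iz_i,\ u_jz_j,\ u_iz_j,\ u_jz_i$, and, since these are the entries of the rank-one matrix $\binom{u_i}{u_j}(z_i\,z_j)$, they satisfy the single identity $(u_iz_i)(u_jz_j)=(u_iz_j)(u_jz_i)$. Writing $v_i:=u_iz_i$, this identity is the mechanism that ties the ``cross'' quantities $u_iz_j,\ u_jz_i$ to the ``diagonal'' ones $v_i,\ v_j$ that the reservoir controls, via $|v_iv_j|\le\tfrac12(v_i^2+v_j^2)$.

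In the detailed-balance case \eqref{1.eta1} I would use $\pi_i a_{ij}=\pi_j a_{ji}$ to symmetrise the $\{i,j\}$-summand, so that it equals $\pi_i a_{ij}\big((u_iz_j+u_jz_i)^2+2v_iv_j\big)$; discarding the non-negative square and using $2v_iv_j\ge-(v_i^2+v_j^2)$ bounds it below by $-\pi_i a_{ij}(v_i^2+v_j^2)$. Summing over pairs and combining with $3\sum_i \pi_i a_{ii}v_i^2$ leaves the coefficient $3\big(a_{ii}-\tfrac13\sum_{j\neq i}a_{ij}\big)\ge 3\alpha_1$ in front of $\pi_i v_i^2$, which is the assertion with $\alpha=\alpha_1$.

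In the case \eqref{1.eta2} one takes $\pi_i=1$, and the $\{i,j\}$-summand reads $a_{ij}(u_jz_i)^2+a_{ji}(u_iz_j)^2+2(a_{ij}+a_{ji})v_iv_j$. Here the product $(u_jz_i)(u_iz_j)=v_iv_j$ is prescribed while the two factors are otherwise free, so the minimum of the first two terms over that constraint is the weighted AM--GM value $2\sqrt{a_{ij}a_{ji}}\,|v_iv_j|$; hence the summand is at least $2\sqrt{a_{ij}a_{ji}}\,|v_iv_j|+2(a_{ij}+a_{ji})v_iv_j$, which is non-negative when $v_iv_j\ge 0$ and, after one more use of $2|v_iv_j|\le v_i^2+v_j^2$ when $v_iv_j<0$, is at least $-c_{ij}(v_i^2+v_j^2)$ for some $c_{ij}\ge 0$ depending only on $a_{ij}$ and $a_{ji}$. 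Summing over pairs and absorbing into $3\sum_i a_{ii}v_i^2$ gives the coefficient $3\big(a_{ii}-\tfrac13\sum_{j\neq i}c_{ij}\big)$, which is $\ge 3\alpha_2$ by \eqref{1.eta2}.

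The only genuinely delicate point is this last step: one must split the term $2(a_{ij}+a_{ji})v_iv_j$ --- equivalently choose the weights in the AM--GM inequality --- so that $c_{ij}$ is no larger than what \eqref{1.eta2} permits, and check that the extremal admissible rank-one configuration, namely the one realising equality in the weighted AM--GM with $v_iv_j<0$, actually occurs, so that no slack is wasted in the final absorption into the self-diffusion reservoir. Everything else is bookkeeping, and $v_i^2=u_i^2z_i^2$ turns the resulting coefficient into the stated form.
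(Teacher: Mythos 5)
Your algebraic setup (the reindexing of $\sum_k a_{ik}u_k^2$, the diagonal reservoir $3\sum_i\pi_i a_{ii}u_i^2z_i^2$, the observation that the only relation among $v_i=u_iz_i$, $v_j=u_jz_j$, $u_iz_j$, $u_jz_i$ is $(u_iz_j)(u_jz_i)=v_iv_j$) is correct, and your treatment of the detailed-balance case \eqref{1.eta1} is complete and gives exactly the claimed constant $3\alpha_1$. Note, however, that the paper itself offers no argument at all for this lemma: it simply quotes \cite[Lemma 3]{CDJ18} with $s=2$, so there is no ``paper proof'' to compare with beyond the citation you also propose.

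The genuine gap is in the case \eqref{1.eta2}, and it is precisely the step you defer as ``the only genuinely delicate point''. Your pairwise estimate gives, for $v_iv_j<0$, the bound $2\sqrt{a_{ij}a_{ji}}\,|v_iv_j|+2(a_{ij}+a_{ji})v_iv_j=-2\big[(a_{ij}+a_{ji})-\sqrt{a_{ij}a_{ji}}\big]|v_iv_j|\ge -\big[(a_{ij}+a_{ji})-\sqrt{a_{ij}a_{ji}}\big](v_i^2+v_j^2)$, i.e.\ $c_{ij}=(a_{ij}+a_{ji})-\sqrt{a_{ij}a_{ji}}$, which is strictly larger than the value $(a_{ij}+a_{ji})-2\sqrt{a_{ij}a_{ji}}$ that \eqref{1.eta2} would require whenever $a_{ij}a_{ji}>0$; and no cleverer choice of weights can reach that smaller value, because the extremal rank-one configuration is attainable. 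Concretely, take $n=2$, $\pi_1=\pi_2=1$, $a_{11}=a_{22}=1$, $a_{12}=a_{21}=4$, $u=(1,1)$, $z=(1,-1)$: condition \eqref{1.eta2} holds with $\alpha_2=1$, yet from \eqref{1.A} one computes $\sum_{i,j}A_{ij}(u)z_iz_j=(a_{10}+7)+(a_{20}+7)-8-8=a_{10}+a_{20}-2$, while the asserted lower bound is $a_{10}+a_{20}+6$; for $a_{10}+a_{20}<2$ the matrix $PA(u)$ is not even positive semi-definite. So the step ``$c_{ij}$ is no larger than what \eqref{1.eta2} permits'' cannot be checked --- it is false --- and your case-2 argument cannot be completed for the statement as printed. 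What your (sharp) argument does prove is the lemma with $\alpha_2$ replaced by $\min_i\big(a_{ii}-\tfrac13\sum_{j\ne i}\big(a_{ij}+a_{ji}-\sqrt{a_{ij}a_{ji}}\big)\big)$, which coincides with $\alpha_1$ in the symmetric case; this strongly suggests a transcription slip in \eqref{1.eta2} relative to \cite{CDJ18}. The right move is therefore not to hunt for better AM--GM weights, but to check Lemma 3 of \cite{CDJ18} and either quote its hypothesis verbatim or state the corrected constant.
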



\subsection{Stochastic Galerkin approximation}\label{sec.gal}

We fix an orthonormal basis $(e_k)_{k\ge 1}$ of $L^2(\dom)$ and a number $N\in\N$
and set $H_N=\operatorname{span}\{e_1,\ldots,e_N\}$. We introduce the
projection operator $\Pi_N:L^2(\dom)\to H_N$,
$$
  \Pi_N(v) = \sum_{i=1}^N(v,e_i)_{L^2(\dom)}e_i, \quad v\in L^2(\dom).
$$
The approximate problem is the following system of stochastic
differential equations,
\begin{align}
  & du_i^{(N)} - \Pi_N\diver\bigg(\sum_{j=1}^n A_{ij}(u^{(N)})\na u_j^{(N)}
	\bigg) dt
	= \Pi_N\bigg(\sum_{j=1}^n\sigma_{ij}(u^{(N)})\bigg)dW_j(t),
	\label{2.approx1} \\
	& u_i^{(N)}(0) = \Pi_N(u_i^0), \quad i=1,\ldots,n. \label{2.approx2}
\end{align}

\begin{lemma}\label{lem.ex}
Let Assumptions \eqref{1.eta1} or \eqref{1.eta2} hold. Then there
exists a pathwise unique strong solution to \eqref{2.approx1}-\eqref{2.approx2}.
\end{lemma}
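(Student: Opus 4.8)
The plan is to recognize \eqref{2.approx1}--\eqref{2.approx2} as a finite-dimensional system of Itô stochastic differential equations for the coefficient vector, and to apply the standard existence-and-uniqueness theory for SDEs with locally Lipschitz coefficients together with an a priori estimate that rules out explosion. First I would fix $N$ and write $u_i^{(N)}(t) = \sum_{k=1}^N c_{ik}(t) e_k$, so that the unknown becomes the process $c = (c_{ik})_{i=1,\dots,n,\,k=1,\dots,N}$ taking values in $\R^{nN}$. Projecting \eqref{2.approx1} onto each basis element $e_\ell$ and using \eqref{1.sigmadW} turns the equation into
\begin{align*}
  dc_{i\ell} &= -\sum_{j=1}^n\bigg(A_{ij}(u^{(N)})\na u_j^{(N)},\na e_\ell\bigg)_{L^2(\dom)} dt
  + \sum_{k\in\N}\big(\sigma_{ij}(u^{(N)})\eta_k,e_\ell\big)_{L^2(\dom)}\,d\beta_{jk}(t),
\end{align*}
where by \eqref{1.A} the drift coefficient is a polynomial (of degree three) in the components $c_{jk}$, hence smooth, and the diffusion coefficient, viewed as a map into the Hilbert-Schmidt operators, is locally Lipschitz in $c$ because $\sigma_{ij}$ is Lipschitz on $L^2(\dom)$ by \eqref{1.sigma} and the finite-dimensional projections $v\mapsto (v,e_\ell)_{L^2(\dom)}$ are continuous. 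Standard SDE theory (see Appendix \ref{app}) then gives a pathwise unique strong solution up to a possibly finite explosion time $\tau_N$.

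Next I would show $\tau_N = \infty$ a.s.\ by deriving a uniform-in-time second-moment bound on the solution before explosion. Applying Itô's formula to $\|P^{1/2} u^{(N)}(t)\|_{L^2(\dom)}^2 = 2H(u^{(N)}(t))$ on the stopped interval $[0,t\wedge\tau_N^m]$, where $\tau_N^m\uparrow\tau_N$ is a localizing sequence, and using that $\Pi_N$ is self-adjoint and that $\na e_\ell$ pairs correctly, the drift contribution equals $-2\sum_{i,j}\int_\dom \pi_i A_{ij}(u^{(N)})\na u_i^{(N)}\cdot\na u_j^{(N)}\,dx\,ds$, which is nonpositive by Lemma \ref{lem.pd}; the Itô correction term is $\int_0^{t}\|P^{1/2}\Pi_N\sigma(u^{(N)})\|_{\T_2(Y;L^2(\dom))}^2\,ds$, which by \eqref{1.sigma} is bounded by $C\int_0^t(1+\|u^{(N)}\|_{L^2(\dom)}^2)\,ds$; and the stochastic integral has zero expectation after stopping. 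Taking expectations and applying Gronwall's lemma yields $\E\|u^{(N)}(t\wedge\tau_N^m)\|_{L^2(\dom)}^2 \le C(T)$ uniformly in $m$; letting $m\to\infty$ forces $\Prob(\tau_N\le T)=0$ for every $T$, hence global existence.

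The main obstacle, and the only place requiring genuine care rather than a direct citation, is verifying that the diffusion coefficient of the SDE is globally well-defined and at least locally Lipschitz as a map from $\R^{nN}$ into the space of $nN\times\infty$ matrices (equivalently, into $\T_2(Y;H_N)$), since the noise is cylindrical and a priori infinite-dimensional. This is handled by noting that the projection $\Pi_N\sigma_{ij}(u)$ lands in the finite-dimensional space $H_N$, so its Hilbert-Schmidt norm into $H_N$ equals $\sum_{\ell=1}^N\|(\sigma_{ij}(u)\cdot,e_\ell)_{L^2(\dom)}\|_{Y^*}^2$, which is controlled by $\|\sigma_{ij}(u)\|_{\T_2(Y;L^2(\dom))}^2$; the growth and Lipschitz bounds \eqref{1.sigma} on $L^2(\dom)$ then transfer, via the continuity of $u^{(N)}\mapsto c$ and back, to the desired bounds on $\R^{nN}$. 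Everything else — measurability and adaptedness of the coefficients, the It\^o-formula computation, and the Gronwall closure — is routine, so I would keep those steps brief and refer to the appendix for the abstract finite-dimensional SDE result.
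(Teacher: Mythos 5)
Your argument is correct, but it follows a different route from the paper. The paper does not pass through local Lipschitz theory plus an explosion-time argument: it rewrites the Galerkin system as $\pi\cdot du = a(u)\,dt + b(u)\,dW(t)$ and directly verifies the hypotheses of the finite-dimensional Krylov/Pr\'ev\^ot--R\"ockner existence theorem stated as Theorem \ref{thm.sde} in the appendix, namely local weak monotonicity (using the local Lipschitz continuity of $A_{ij}$, the equivalence of norms on $H_N$, and the positive definiteness of $PA$ from Lemma \ref{lem.pd}) and global weak coercivity (again from Lemma \ref{lem.pd} together with the linear growth bound \eqref{1.sigma} on $\sigma$); that abstract theorem then delivers a global pathwise unique strong solution in one step. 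Your proposal instead obtains a local solution from locally Lipschitz coefficients and then rules out explosion by stopping, It\^o's formula for $\|P^{1/2}u^{(N)}\|_{L^2(\dom)}^2$, Lemma \ref{lem.pd}, and Gronwall --- which is essentially a hands-on reproof of what the weak coercivity hypothesis buys inside the cited theorem. Both arguments rest on exactly the same two structural inputs (positive definiteness of $PA$ and \eqref{1.sigma}), so the mathematics is sound; the trade-off is that the paper's route is shorter because the non-explosion is absorbed into the abstract result, while yours is more elementary and self-contained. One small caveat: the appendix result you point to (Theorem \ref{thm.sde}) is the monotone--coercive theorem, not a local-existence statement for locally Lipschitz random coefficients, so for your first step you would have to invoke the standard localization/truncation argument (or note that locally Lipschitz coefficients satisfy the local weak monotonicity condition \eqref{2.ab2} and supply the coercivity bound anyway), rather than cite that theorem as giving a solution only up to an explosion time.
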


\begin{proof}
We apply Theorem \ref{thm.sde} in Appendix \ref{app} to
\begin{equation}\label{2.pi}
  \pi\cdot du = a(u)dt + b(u)dW(t), \quad t>0, \quad u(0)=\Pi_N (u^0),
\end{equation}
where
\begin{align*}
  & a=(a_1,\ldots,a_n):H_N\to\R^n, \quad
	a_i(u)=\Pi_N\diver\bigg(\sum_{j=1}^n \pi_iA_{ij}(u)\na u_j\bigg), \\
  & b_{ij}:H_N \to\T_2(Y;H_N), \quad b_{ij}(u) = \pi_i\Pi_N\sigma_{ij}(u),
\end{align*}	
and the numbers $\pi_1,\ldots,\pi_n>0$ are given by \eqref{1.eta1}.
Observe that this problem is equivalent to \eqref{2.approx1} after 
componentwise division by $\pi_i$.
It is sufficient to verify Assumptions
\eqref{2.ab1}-\eqref{2.ab2}. Let $R>0$, $T>0$, and $\omega\in\Omega$ and let
$u$, $v\in H_N$ with $\|u\|_{H_N}$, $\|v\|_{H_N}\le R$. Then, using the
positive definiteness of $PA$, according to Lemma \ref{lem.pd}, 
and the equivalence of norms on $H_N$, 
\begin{align*}
  (a(u)-a(v),u-v)_{H_N} 
	&= -\sum_{i,j=1}^n\int_{\dom}\pi_iA_{ij}(u)\na(u_i-v_i)\cdot\na(u_j-v_j)dx \\
	&\phantom{xx}{}
	+ \sum_{i,j=1}^n\int_{\dom}\pi_i(A_{ij}(u)-A_{ij}(v))\na(u_i-v_i)\cdot\na v_j dx \\
	&\le C\|A(u)-A(v)\|_{L^2(\dom)}\|\na(u-v)\|_{L^2(\dom)}\|\na v\|_{L^\infty(\dom)} \\
	&\le C(N,R)\|u-v\|_{H_N}^2,
\end{align*}
where the constant $C(N,R)>0$ depends on $N$ and $R$.
In the last step we have used the fact that $A_{ij}(u)$ is locally Lipschitz continuous.
Hence, together with assumption \eqref{1.sigma} on $\sigma$, the local weak
monotonicity condition \eqref{2.ab1} holds. To verify the weak coercivity
condition \eqref{2.ab2}, we take $u\in H_N$ with $\|u\|_{H_N}\le R$ and
employ again the positive definiteness of $PA$:
\begin{align*}
  (a(u),u)_{H_N} + \|b(u)\|_{\T_2(Y;H_N)}^2
	&= -\sum_{i,j=1}^n\int_{\dom}\pi_iA_{ij}(u)\na u_i\cdot\na u_j dx
	+ \|P^{1/2}\sigma(u)\|_{\T_2(Y;H_N)}^2 \\
	&\le C_\sigma(1+\|u\|_{H_N}^2),
\end{align*}
where we recall that $P^{1/2}=\operatorname{diag}(\pi_1^{1/2},\ldots,\pi_n^{1/2})$.
Therefore, the lemma follows after applying Theorem \ref{thm.sde}.
\end{proof}


\subsection{Uniform estimates}\label{sec.unif}

We prove some energy-type estimates uniform in $N$.

\begin{lemma}[A priori estimates]\label{lem.est1}
Let $T>0$ and let $u^{(N)}$ be the pathwise unique strong solution 
to \eqref{2.approx1}-\eqref{2.approx2} on $[0,T]$. Then there exists a constant
$C_1>0$ which depends on $\E\|u^0\|_{L^2(\dom)}^2$, $C_\sigma$, and 
$T$ but not on $N$ such that
\begin{align}
  \sup_{N\in\N}\E\bigg(\sup_{t\in(0,T)}\|u^{(N)}\|_{L^2(\dom)}^2\bigg)
	&\le C_1, \label{3.est1} \\
  \sup_{N\in\N}\E\bigg(\int_0^T\|\na u^{(N)}\|_{L^2(\dom)}^2 dt\bigg)
	&\le C_1, \label{3.est2} \\
	\alpha\sup_{N\in\N}\E\bigg(\int_0^T\big\|\na (u^{(N)})^2\big\|_{L^2(\dom)}^2 dt\bigg)
	&\le C_1, \label{3.est3}
\end{align}
and $\alpha=\alpha_1$ if \eqref{1.eta1} holds, 
$\alpha=\alpha_2$ if \eqref{1.eta2} holds.
\end{lemma}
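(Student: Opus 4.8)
The plan is to apply It\^o's formula to the quadratic entropy $H(u^{(N)}(t)) = \tfrac12\|P^{1/2}u^{(N)}(t)\|_{L^2(\dom)}^2$ along the Galerkin solution. Since $u^{(N)}$ is a pathwise unique strong solution of the finite-dimensional SDE \eqref{2.approx1}-\eqref{2.approx2}, the It\^o formula for the function $x\mapsto\tfrac12\|P^{1/2}x\|^2$ is justified and produces, after using the duality pairing $\langle\Pi_N\diver(A_{ij}(u^{(N)})\na u_j^{(N)}),u_i^{(N)}\rangle = -\int_\dom A_{ij}(u^{(N)})\na u_i^{(N)}\cdot\na u_j^{(N)}\,dx$ (the projection $\Pi_N$ is self-adjoint and $u_i^{(N)}\in H_N$), the identity
\begin{align*}
  H(u^{(N)}(t)) - H(u^{(N)}(0))
  &+ \sum_{i,j=1}^n\int_0^t\int_\dom \pi_iA_{ij}(u^{(N)})\na u_i^{(N)}\cdot\na u_j^{(N)}\,dx\,ds \\
  &= \frac12\int_0^t\|P^{1/2}\Pi_N\sigma(u^{(N)})\|_{\T_2(Y;L^2(\dom))}^2\,ds
  + \sum_{i,j=1}^n\int_0^t\int_\dom \pi_i u_i^{(N)}\sigma_{ij}(u^{(N)})\,dW_j(s)\,dx.
\end{align*}
This is exactly the stochastic entropy inequality quoted in the introduction (here with equality, since $PA(u)$-positivity is only used afterwards).

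Next I would take expectations. The stochastic integral is a genuine martingale once we know $\E\int_0^t\|\cdot\|^2<\infty$, which follows on the finite Galerkin level from \eqref{1.sigma} and the a priori bound on $\|u^{(N)}\|_{L^2}$; hence its expectation vanishes. For the diffusion term on the left I invoke Lemma \ref{lem.pd}: $\sum_{i,j}\pi_iA_{ij}(u^{(N)})\na u_i^{(N)}\cdot\na u_j^{(N)} \ge \sum_i\pi_i a_{i0}|\na u_i^{(N)}|^2 + 3\alpha\sum_i\pi_i|u_i^{(N)}\na u_i^{(N)}|^2$, and $u_i\na u_i = \tfrac12\na(u_i^2)$, so this controls both $\|\na u^{(N)}\|_{L^2}^2$ and $\alpha\|\na(u^{(N)})^2\|_{L^2}^2$ from below with positive constants $\min_i\pi_i a_{i0}>0$ and $\tfrac34\alpha\min_i\pi_i$. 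On the right, the noise term is bounded using \eqref{1.sigma} and $\|\Pi_N\|\le 1$ by $\tfrac12 C_\sigma\,n^2(1+\|u^{(N)}\|_{L^2}^2) \le C(1 + H(u^{(N)}))$. After integrating the resulting differential inequality for $\E H(u^{(N)}(t))$ and applying Gronwall's lemma on $[0,T]$, one gets $\sup_{t\le T}\E H(u^{(N)}(t))\le C_1$ with $C_1$ depending only on $\E\|u^0\|_{L^2}^2$, $C_\sigma$, $T$; feeding this back into the integrated diffusion term yields the two gradient bounds \eqref{3.est2} and \eqref{3.est3} but with the expectation outside the supremum over $t$.

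The remaining point, and the main technical obstacle, is upgrading the $L^2$-bound to $\E(\sup_{t\in(0,T)}\|u^{(N)}\|_{L^2}^2)\le C_1$ as stated in \eqref{3.est1}, i.e.\ moving the supremum inside the expectation. For this I would return to the entropy identity before taking expectations, move the supremum over $t\in[0,T]$ inside, bound the deterministic terms as above, and estimate the supremum of the stochastic integral by the Burkholder-Davis-Gundy inequality (Proposition \ref{prop.bdg}):
$$
  \E\sup_{t\le T}\bigg|\sum_{i,j}\int_0^t\int_\dom\pi_i u_i^{(N)}\sigma_{ij}(u^{(N)})\,dW_j\,dx\bigg|
  \le C\,\E\bigg(\int_0^T\|P^{1/2}u^{(N)}\|_{L^2}^2\,\|P^{1/2}\Pi_N\sigma(u^{(N)})\|_{\T_2}^2\,ds\bigg)^{1/2}.
$$
Using \eqref{1.sigma} this is bounded by $C\,\E\big(\sup_{t\le T}\|u^{(N)}\|_{L^2}^2\int_0^T(1+\|u^{(N)}\|_{L^2}^2)\,ds\big)^{1/2}$, and a Young inequality $ab\le \tfrac12\delta a^2 + \tfrac1{2\delta}b^2$ absorbs $\tfrac12\E\sup_{t\le T}\|u^{(N)}\|_{L^2}^2$ into the left-hand side; the leftover factor $\E\int_0^T(1+\|u^{(N)}\|_{L^2}^2)\,ds$ is already controlled by the time-integrated $L^2$-bound from the previous step. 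Combining everything and using that $\Pi_N u_i^0$ has $L^2$-norm bounded by that of $u_i^0$ gives \eqref{3.est1}, and the gradient estimates \eqref{3.est2}-\eqref{3.est3} then hold with constants independent of $N$. Throughout, one must be slightly careful that every bound is genuinely $N$-independent — which it is, because $\|\Pi_N\|_{L^2\to L^2}\le 1$ and Lemma \ref{lem.pd} is dimension-free — and that the BDG constant depends only on the exponent, not on $N$.
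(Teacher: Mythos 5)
Your proposal is correct and follows essentially the same route as the paper: It\^o's formula for the quadratic entropy $\tfrac12\|P^{1/2}u^{(N)}\|_{L^2(\dom)}^2$, the lower bound on the diffusion term from Lemma \ref{lem.pd}, the linear-growth assumption \eqref{1.sigma}, the Burkholder-Davis-Gundy inequality with Young's inequality to absorb $\E\sup_t\|u^{(N)}\|_{L^2(\dom)}^2$, and Gronwall. The only (harmless) difference is organizational: you first apply Gronwall to $\sup_t\E\|u^{(N)}(t)\|_{L^2(\dom)}^2$ after killing the martingale term in expectation and then upgrade to $\E\sup_t$ via BDG without a second Gronwall, whereas the paper takes the supremum and expectation at once and applies Gronwall directly to $\E\big(\sup_{t\in[0,\tau]}\|u^{(N)}\|_{L^2(\dom)}^2\big)$; both yield the same $N$-independent constant.
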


We remark that \eqref{3.est1} shows that $(u^{(N)})$ is bounded in
$L^2(\dom\times(0,T)\times\Omega)$, so together with \eqref{3.est2}, we infer
a uniform bound for $u^{(N)}$ in $L^2((0,T)\times\Omega;H^1(\dom))$.

\begin{proof}
We apply the It\^o formula (Theorem \ref{thm.ito}) to the 
process $X(t)=u^{(N)}(t)$, where $u^{(N)}$ solves \eqref{2.pi}:
\begin{align}
  \frac12\|&P^{1/2}u^{(N)}(t)\|_{L^2(\dom)}^2
	- \frac12\|\Pi_N(P^{1/2}u^0)\|_{L^2(\dom)}^2 \nonumber \\
	&= \sum_{i,j=1}^n\int_0^t\big(u_i^{(N)}(s),
	\Pi_N\diver(\pi_i A_{ij}(u^{(N)}(s))
	\na u_j^{(N)}(s))\big)_{L^2(\dom)}ds \nonumber \\
	&\phantom{xx}{}+ \frac12\int_0^t\big\|\Pi_N(P^{1/2}\sigma(u^{(N)}(s)))
	\big\|_{\T_2(Y;L^2(\dom))}^2ds \nonumber \\
	&\phantom{xx}{}
	+ \sum_{i,j=1}^n\int_0^t\big(u_i^{(N)}(s),\Pi_N(\pi_i\sigma_{ij}(u^{(N)}(s)))dW_j(s)
	\big)_{L^2(\dom)} \nonumber \\
	&= -\sum_{i,j=1}^n\int_0^t\big(\na u_i^{(N)}(s),\pi_iA_{ij}(u^{(N)}(s))
	\na u_j^{(N)}(s)\big)_{L^2(\dom)}ds \nonumber \\
	&\phantom{xx}{}+ \frac12\int_0^t\big\|\Pi_N(P^{1/2}\sigma(u^{(N)}(s)))
	\big\|_{\T_2(Y;L^2(\dom))}^2ds \nonumber \\
	&\phantom{xx}{}+ \sum_{i,j=1}^n\int_0^t\pi_i \big(u_i^{(N)}(s),
	\sigma_{ij}(u^{(N)}(s))dW_j(s)\big)_{L^2(\dom)}. \label{2.aux1}
\end{align}
The first term on the right-hand side can be estimated by using Lemma \ref{lem.pd}:
\begin{align*}
  \sum_{i,j=1}^n&\big(\na u_i^{(N)}(s),\pi_iA_{ij}(u^{(N)}(s))
	\na u_j^{(N)}\big)_{L^2(\dom)} \\
	&\ge \sum_{i=1}^n\pi_i a_{i0}\int_{\dom}|\na u_i^{(N)}|^2dx
	+ 3\alpha\sum_{i=1}^n\pi_i\int_{\dom}|u_i^{(N)}|^2|\na u_i^{(N)}|^2 dx \\
	&\ge C\|\na u^{(N)}\|_{L^2(\dom)}^2 + C\alpha\|\na (u^{(N)})^2\|_{L^2(\dom)}^2,
\end{align*}
where $(u^{(N)})^2=((u_1^{(N)})^2,\ldots,(u_n^{(N)})^2)$ and here and in the
following, $C>0$ is a generic constant independent of $N$ with values changing
from line to line. Therefore, \eqref{2.aux1} becomes
\begin{align}
  \frac12\|&P^{1/2}u^{(N)}(t)\|_{L^2(\dom)}^2
	+ C\int_0^t\|\na u^{(N)}(s)\|_{L^2(\dom)}^2ds
	+ C\alpha\int_0^t\|\na(u^{(N)}(s)^2)\|_{L^2(\dom)}^2ds \nonumber \\
	&\le \frac12\|P^{1/2}u^0\|_{L^2(\dom)}^2
	+ \frac12\int_0^t\big\|P^{1/2}\sigma(u^{(N)}(s))\big\|_{\T_2(Y;L^2(\dom))}^2 ds 
	\label{3.aux2} \\
	&\phantom{xx}{}+ \sum_{i,j=1}^n\int_0^t\pi_i\big(u_i^{(N)}(s),
	\sigma_{ij}(u^{(N)}(s))dW_j(s)\big)_{L^2(\dom)}.
	\nonumber
\end{align}

For the second integral on the right-hand side, we take into account
assumption \eqref{1.sigma}:
\begin{align*}
  \frac12\int_0^t&\big\|P^{1/2}\sigma(u^{(N)}(s))\big\|_{\T_2(Y;L^2(\dom))}^2 ds
	\le C\int_0^t\big\|\sigma(u^{(N)}(s))\big\|_{\T_2(Y;L^2(\dom))}^2 ds \\
	&\le C\int_0^t\big(1 + \|u^{(N)}\|_{L^2(\dom)}^2\big)ds 
	= Ct + C\int_0^t\|u^{(N)}\|_{L^2(\dom)}^2 ds.
\end{align*}
To estimate the last integral in \eqref{3.aux2}, we observe that,
since the process $u^{(N)}$ is $H_N$-valued and a solution to \eqref{2.approx1},
the process
$$
  \mu^{(N)}(t) = \sum_{i,j=1}^n\int_0^t\pi_i\big(u_i^{(N)},
	\sigma_{ij}(u^{(N)}(s))dW_j(s)\big)_{L^2(\dom)}, \quad t\in[0,T],
$$
is an $\F_t$-martingale. Then, by the Burkholder-Davis-Gundy inequality 
(see Proposition \ref{prop.bdg}), we have
\begin{align*}
  \E\bigg(\sup_{t\in(0,T)}&\bigg|\sum_{i,j=1}^n\int_0^t\pi_i\big(u_i^{(N)},
	\sigma_{ij}(u^{(N)}(s))dW_j(s)\big)_{L^2(\dom)}\bigg|\bigg) \\
	&\le C\E\bigg(\int_0^T\|u^{(N)}(s)\|_{L^2(\dom)}^2
	\big\|\sigma(u^{(N)}(s))\big\|_{\T_2(Y;L^2(\dom))}^2\bigg)^{1/2},
\end{align*}
and by the H\"older inequality, assumption \eqref{1.sigma} on $\sigma$, and
the Young inequality, we obtain
\begin{align}
  \E&\sup_{t\in(0,T)}\bigg|\sum_{i,j=1}^n\int_0^t\pi_i\big(u_i^{(N)},
	\sigma_{ij}(u^{(N)}(s))dW_j(s)\big)_{L^2(\dom)}\bigg| \nonumber \\
	&\le C\E\bigg\{\bigg(\sup_{t\in[0,T]}\|u^{(N)}(t)\|_{L^2(\dom)}^2\bigg)^{1/2}
	C_\sigma^{1/2}
	\bigg(\int_0^T\big(1+\|u^{(N)}\|_{L^2(\dom)}^2\big)ds\bigg)^{1/2}\bigg\}
	\nonumber \\
  &\le \frac14\E\bigg(\sup_{t\in[0,T]}\|u^{(N)}(t)\|_{L^2(\dom)}^2\bigg)
	+ C\bigg(T + \E\int_0^T\|u^{(N)}\|_{L^2(\dom)}^2 ds\bigg). \label{3.dW}
\end{align}
We take in \eqref{3.aux2} the supremum over $t\in(0,T)$ and the mathematical
expectation and use the inequality 
$\|P^{1/2}u^{(N)}\|_{L^2(\dom)}\ge C\|u^{(N)}\|_{L^2(\dom)}$ for some constant $C>0$
only depending on $\pi_1,\ldots,\pi_n$
and the previous estimates to conclude that 
\begin{align}
  \frac14\E\bigg(&\sup_{t\in[0,T]}\|u^{(N)}(t)\|_{L^2(\dom)}^2\bigg)
	+ C\E\int_0^t\|\na u^{(N)}(s)\|_{L^2(\dom)}^2 ds
	+ C\alpha\E\int_0^t\|\na(u^{(N)}(s)^2)\|_{L^2(\dom)}^2ds \nonumber \\
	&\le CT + C\E\big(\|u^0\|_{L^2(\dom)}^2\big)
	+ C\int_0^T\E\bigg(\sup_{t\in[0,\tau]}\|u^{(N)}(\tau)\|_{L^2(\dom)}^2\bigg)ds.
	\label{3.aux3}
\end{align}
We infer from the Gronwall lemma that
$$
  \sup_{N\in\N}\E\bigg(\sup_{t\in[0,T]}\|u^{(N)}(t)\|_{L^2(\dom)}^2\bigg)
	\le C,
$$
where $C>0$ depends on $\E\|u^0\|_{L^2(\dom)}^2$, $C_\sigma$, and $T$.
This proves \eqref{3.est1}. Inserting the previous estimate into \eqref{3.aux3},
we deduce immediately estimates \eqref{3.est2} and \eqref{3.est3}.
\end{proof}

We need a higher-order moment estimate,
which is proved in the following lemma. 

\begin{lemma}\label{lem.estp}
Let $T>0$ and let $u^{(N)}$ be the pathwise unique strong solution 
to \eqref{2.approx1}-\eqref{2.approx2} on $[0,T]$. Furthermore, let $p>2$
and $\E\|u^0\|_{L^2(\dom)}^p<\infty$. Then there exists a constant
$C_2>0$ which depends on $p$, $\E\|u^0\|_{L^2(\dom)}^p$, $C_\sigma$, and 
$T$ but not on $N$ such that
\begin{equation}\label{3.estp}
  \sup_{N\in\N}\E\bigg(\sup_{t\in(0,T)}\|u^{(N)}\|_{L^2(\dom)}^p\bigg) \le C_2.
\end{equation}
\end{lemma}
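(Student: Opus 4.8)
The plan is to repeat the It\^o computation behind Lemma~\ref{lem.est1}, now applied to a $(p/2)$-power of the real-valued process $Y^{(N)}(t):=\|P^{1/2}u^{(N)}(t)\|_{L^2(\dom)}^2$, and then to close the estimate with the Burkholder--Davis--Gundy and Gronwall lemmas. Recall from the It\^o identity \eqref{2.aux1} that $\frac12\,dY^{(N)}$ equals the gradient term $-\sum_{i,j}(\na u_i^{(N)},\pi_iA_{ij}(u^{(N)})\na u_j^{(N)})_{L^2(\dom)}\,dt$ (which is nonpositive by Lemma~\ref{lem.pd}), plus $\frac12\|\Pi_N(P^{1/2}\sigma(u^{(N)}))\|_{\T_2(Y;L^2(\dom))}^2\,dt$, plus $dM^{(N)}$, where $M^{(N)}(t)=\sum_{i,j}\int_0^t\pi_i(u_i^{(N)},\sigma_{ij}(u^{(N)})\,dW_j)_{L^2(\dom)}$ is the $\F_t$-martingale identified there. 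By \eqref{1.sigmadW}, \eqref{1.sigma}, and the bound $\|P^{1/2}v\|_{L^2(\dom)}\ge c\|v\|_{L^2(\dom)}$, one has the pathwise estimates $\|\Pi_N(P^{1/2}\sigma(u^{(N)}))\|_{\T_2(Y;L^2(\dom))}^2\le C(1+Y^{(N)})$ and $d\langle M^{(N)}\rangle\le C\,Y^{(N)}(1+Y^{(N)})\,dt$. I would apply It\^o's formula (Theorem~\ref{thm.ito}) to $\phi_\delta(y)=(y+\delta)^{p/2}$, $\delta>0$, composed with $Y^{(N)}$ -- the shift is needed because for $2<p<4$ the map $y\mapsto y^{p/2}$ is not $C^2$ at the origin, and $\delta\downarrow0$ at the very end by monotone convergence -- discard the gradient contribution (it is multiplied by the nonnegative factor $\frac p2(Y^{(N)}+\delta)^{p/2-1}$), and dominate the remaining $dt$-term $\frac p2(Y^{(N)}+\delta)^{p/2-1}\|\Pi_N(P^{1/2}\sigma(u^{(N)}))\|^2$ together with the It\^o correction $p(\frac p2-1)(Y^{(N)}+\delta)^{p/2-2}\,d\langle M^{(N)}\rangle$. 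Using $Y^{(N)}\le Y^{(N)}+\delta$ and $p>2$ (so $p/2-1>0$), both are $\le C(1+(Y^{(N)}+\delta)^{p/2})$, so one reaches
$$
  (Y^{(N)}(t)+\delta)^{p/2}\le(Y^{(N)}(0)+\delta)^{p/2}+C\int_0^t\big(1+(Y^{(N)}(s)+\delta)^{p/2}\big)\,ds+p\int_0^t(Y^{(N)}(s)+\delta)^{p/2-1}\,dM^{(N)}(s).
$$

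To keep every quantity finite a priori I would localize with the stopping times $\tau_R=\inf\{t\in[0,T]:\|u^{(N)}(t)\|_{L^2(\dom)}\ge R\}$ (with $\inf\emptyset:=T$), replace $t$ by $t\wedge\tau_R$, and then take $\sup_{t\in[0,T]}$ and $\E$. On $[0,\tau_R]$ the integrand of the stochastic term is bounded, so $M^{(N)}(\cdot\wedge\tau_R)$ is a square-integrable martingale and the Burkholder--Davis--Gundy inequality (Proposition~\ref{prop.bdg}) together with $d\langle M^{(N)}\rangle\le C\,Y^{(N)}(1+Y^{(N)})\,dt$ bounds $p\,\E\sup_{t\le T}|\int_0^{t\wedge\tau_R}(Y^{(N)}+\delta)^{p/2-1}\,dM^{(N)}|$ by $C\,\E(\int_0^{T\wedge\tau_R}(Y^{(N)}+\delta)^{p-1}(1+(Y^{(N)}+\delta))\,ds)^{1/2}$; pulling $\sup_{s\le T\wedge\tau_R}(Y^{(N)}+\delta)^{p/2}$ out of the integral by H\"older and then applying Young's inequality gives the bound $\frac12\E\sup_{t\le T\wedge\tau_R}(Y^{(N)}+\delta)^{p/2}+C(T+\E\int_0^T\sup_{s'\le s\wedge\tau_R}(Y^{(N)}(s')+\delta)^{p/2}\,ds)$. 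Absorbing the first summand into the left-hand side and using $Y^{(N)}(0)\le C\|u^0\|_{L^2(\dom)}^2$, hence $\E(Y^{(N)}(0)+\delta)^{p/2}\le C(1+\E\|u^0\|_{L^2(\dom)}^p)$, the function $g_R(t):=\E\sup_{s\le t\wedge\tau_R}(Y^{(N)}(s)+\delta)^{p/2}$ -- which is finite, being $\le(CR^2+\delta)^{p/2}$ -- satisfies $g_R(t)\le C+C\int_0^t g_R(s)\,ds$ with $C$ independent of $N$, $R$, $\delta$. Gronwall's lemma bounds $g_R(T)$ uniformly; letting $R\to\infty$ ($\tau_R\uparrow T$ $\Prob$-a.s.\ by \eqref{3.est1}) via Fatou's lemma, then $\delta\downarrow0$ via monotone convergence, and recalling $\|u^{(N)}\|_{L^2(\dom)}^2\le C\,Y^{(N)}$, gives \eqref{3.estp}.

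I expect the real difficulty to lie not in any single estimate but in the bookkeeping that makes the cycle close: the It\^o correction term and the bracket inside the Burkholder--Davis--Gundy estimate must both be controlled by the \emph{same} quadratic-in-$\|u^{(N)}\|_{L^2(\dom)}^2$ quantity supplied by \eqref{1.sigma}, so that one of them can be absorbed into $\E\sup_t(\cdot)^{p/2}$ and the other fed into Gronwall's lemma. The localization by $\tau_R$ is essential here: unlike for the first moment in Lemma~\ref{lem.est1}, where the energy identity itself already delivers an a priori bound, finiteness of the $p$-th moment cannot be presupposed for a drift growing cubically in $u^{(N)}$. Apart from these points the argument is the same It\^o--Burkholder-Davis-Gundy--Young--Gronwall loop carried out in Lemma~\ref{lem.est1}, merely with one higher power.
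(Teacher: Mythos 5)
Your argument is correct, but it takes a genuinely different route from the paper. The paper never applies It\^o's formula a second time: it takes the pathwise energy inequality \eqref{3.aux2} (the output of the single It\^o application behind Lemma \ref{lem.est1}), drops the nonnegative gradient terms, takes $\sup_{t}$, raises the whole inequality to the power $p/2$, applies H\"older, and then estimates the resulting term $\E\big(\sup_t|M^{(N)}(t)|\big)^{p/2}$ by the Burkholder--Davis--Gundy inequality at exponent $p/2$, followed by Young and Gronwall. You instead apply the one-dimensional It\^o formula to $(Y^{(N)}+\delta)^{p/2}$ with $Y^{(N)}=\|P^{1/2}u^{(N)}\|_{L^2(\dom)}^2$, handle the It\^o correction via the bracket bound $d\langle M^{(N)}\rangle\le CY^{(N)}(1+Y^{(N)})dt$, localize with stopping times, and use BDG at exponent $1$ for the new martingale $\int(Y^{(N)}+\delta)^{p/2-1}dM^{(N)}$. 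Both loops close with the same Young--Gronwall mechanism, and your exponents and constants check out (the regularization $\delta$ is indeed needed only for $2<p<4$, and $\tau_R\uparrow T$ already follows from pathwise continuity of the $H_N$-valued solution, so the appeal to \eqref{3.est1} is superfluous). What the paper's route buys is brevity: no second It\^o application, no correction term, no $\delta$-regularization or stopping times. What your route buys is rigor at one point the paper glosses over: absorbing $\tfrac12\E\sup_t(\cdot)^{p/2}$ into the left-hand side presupposes that this quantity is finite, which the paper does not justify, whereas your localization by $\tau_R$ (with the bound $g_R(T)\le(CR^2+\delta)^{p/2}<\infty$ before Gronwall, then Fatou/monotone convergence in $R$ and $\delta$) makes the absorption legitimate. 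So the proposal is a valid, somewhat more classical and more careful proof of \eqref{3.estp}.
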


\begin{proof}
We take the supremum over $t\in(0,T)$ in \eqref{3.aux2} and neglect the second and
third terms on the left-hand side. Then, raising both sides to the the power $p/2$
and applying the H\"older inequality, we find that
\begin{align*}
  \sup_{t\in(0,T)}\|u^{(N)}\|_{L^2(\dom)}^p
	&\le C\|u^0\|_{L^2(\dom)}^p 
	+ CT^{p/2-1}\int_0^T\big\|\sigma(u^{(N)}(s))\big\|_{\T_2(Y;L^2(\dom))}^p ds \\
	&\phantom{xx}{}+ C\bigg(\sup_{t\in(0,T)}\sum_{i,j=1}^n\int_0^t\big(u_i^{(N)}(s),
	\pi_i\sigma_{ij}(u^{(N)}(s))dW_j(s)\big)_{L^2(\dom)}\bigg)^{p/2}.
\end{align*}
Taking the mathematical expectation and using assumption \eqref{1.sigma}, it follows that
\begin{align}
  \E\bigg(&\sup_{t\in(0,T)}\|u^{(N)}\|_{L^2(\dom)}^p\bigg)
	\le C + C\E\|u^0\|_{L^2(\dom)}^p
	+ C\E\int_0^T\|u^{(N)}(s)\|_{L^2(\dom)}^{p} ds \nonumber \\
  &\phantom{xx}{}+ C\E\bigg(\sup_{t\in(0,T)}\sum_{i,j=1}^n\int_0^t\big(u_i^{(N)}(s),
	\pi_i\sigma_{ij}(u^{(N)}(s))dW_j(s)\big)_{L^2(\dom)}\bigg)^{p/2}. \label{3.aux4}
\end{align}
For the last term, we use the Burkholder-Davis-Gundy and Young inequalities,
\begin{align*}
  \E\bigg(&\sup_{t\in(0,T)}\sum_{i,j=1}^n\int_0^t\big(u^{(N)}(s),
	\pi_i\sigma_{ij}(u^{(N)}(s))dW_j(s)\big)_{L^2(\dom)}\bigg)^{p/2} \\
	&\le C\E\bigg(\int_0^T\|u^{(N)}(s)\|_{L^2(\dom)}^2
	\big\|\sigma(u^{(N)}(s))\big\|_{\T_2(Y;L^2(\dom))}^2 ds\bigg)^{p/4} \\
  &\le C\E\bigg\{\bigg(\sup_{t\in[0,T]}\|u^{(N)}(t)\|_{L^2(\dom)}^2\bigg)^{p/4}
	C_\sigma^{p/4}\bigg(\int_0^T\big(1+\|u^{(N)}\|_{L^2(\dom)}^2\big)ds
	\bigg)^{p/4}\bigg\} \\
	&\le C\E\bigg\{\bigg(\sup_{t\in[0,T]}\|u^{(N)}(t)\|_{L^2(\dom)}^p\bigg)^{1/2}
	\bigg(\int_0^T\big(1+\|u^{(N)}\|_{L^2(\dom)}^p\big)ds\bigg)^{1/2}\bigg\} \\
	&\le \frac12\E\bigg(\sup_{t\in[0,T]}\|u^{(N)}(t)\|_{L^2(\dom)}^p\bigg)
	+ C\E\int_0^T\big(1+\|u^{(N)}\|_{L^2(\dom)}^p\big)ds.
\end{align*}
Inserting this estimate into \eqref{3.aux4} and observing that the first term
on the right-hand side of the previous inequality 
can be absorbed by the first term on the left-hand side of
\eqref{3.aux4}, we infer that
\begin{align*}
  \E\bigg(\sup_{t\in(0,T)}\|u^{(N)}\|_{L^2(\dom)}^p\bigg)
	&\le C + C\E\|u^0\|_{L^2(\dom)}^p
	+ C\E\int_0^T\sup_{\tau\in(0,s)}\|u^{(N)}(\tau)\|_{L^2(\dom)}^{p} ds \\
	&\phantom{xx}{}+ C\E\int_0^T\big(1+\|u^{(N)}\|_{L^2(\dom)}^p\big)ds.
\end{align*}
Then the Gronwall inequality implies that
$$
  \E\bigg(\sup_{t\in(0,T)}\|u^{(N)}\|_{L^2(\dom)}^p\bigg)	\le C,
$$
which concludes the proof.
\end{proof}

The previous lemma allows us to improve slightly the regularity of $u^{(N)}$.

\begin{lemma}\label{lem.3}
Let $T>0$ and let $u^{(N)}$ be the pathwise unique strong solution 
to \eqref{2.approx1}-\eqref{2.approx2} on $[0,T]$. Then
$(u_i^{(N)})^2\in L^3((0,T)\times\Omega;L^2(\dom))$ for $i=1,\ldots,N$
and, for some constant $C_3>0$,
$$
  \E\int_0^T\|(u^{(N)})^2\|_{L^2(\dom)}^3 dt \le C_3,
$$
where $(u^{(N)})^2$ is the vector with the coefficients $(u_i^{(N)})^2$ for
$i=1,\ldots,N$. 
\end{lemma}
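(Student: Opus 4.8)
The plan is to bound $\E\int_0^T\|(u_i^{(N)})^2\|_{L^2(\dom)}^3\,dt$ uniformly in $N$ by interpolating the gradient estimate \eqref{3.est3} for $v:=(u_i^{(N)})^2$ against the high-moment bound of Lemma \ref{lem.estp}. First I would record that $\|v\|_{L^1(\dom)}=\|u_i^{(N)}\|_{L^2(\dom)}^2$ and that, since $\alpha>0$, estimate \eqref{3.est3} provides a bound for $\na v$ in $L^2((0,T)\times\Omega;L^2(\dom))$ uniform in $N$. Since $u^0\in L^2(\dom)$ is deterministic, all moments $\E\|u^0\|_{L^2(\dom)}^p=\|u^0\|_{L^2(\dom)}^p$ are finite, so Lemma \ref{lem.estp} applies for every $p>2$; I will use it with the critical exponent $p:=24/(4-d)$, which is $>2$ for $d\le 3$. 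The Gagliardo--Nirenberg inequality on the bounded Lipschitz domain $\dom\subset\R^d$ gives, with $\theta:=d/(d+2)\in(0,1)$,
\[
  \|v\|_{L^2(\dom)} \le C\|\na v\|_{L^2(\dom)}^{\theta}\|v\|_{L^1(\dom)}^{1-\theta} + C\|v\|_{L^1(\dom)},
\]
and, raising to the third power and inserting $\|v\|_{L^1(\dom)}=\|u_i^{(N)}\|_{L^2(\dom)}^2$,
\[
  \|v\|_{L^2(\dom)}^3 \le C\|\na v\|_{L^2(\dom)}^{3\theta}\,\|u_i^{(N)}\|_{L^2(\dom)}^{6(1-\theta)} + C\,\|u_i^{(N)}\|_{L^2(\dom)}^{6}.
\]

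The exponent of the gradient is $a:=3\theta=3d/(d+2)$, and $a<2$ is equivalent to $d<4$; this is precisely where the restriction $d\le 3$ enters. For the first term I would bring the factor $\|u_i^{(N)}\|_{L^2(\dom)}^{6(1-\theta)}\le\big(\sup_{t\in(0,T)}\|u^{(N)}(t)\|_{L^2(\dom)}\big)^{6(1-\theta)}$ out of the time integral, apply H\"older in $t$ (legitimate since $a<2$) to bound $\int_0^T\|\na v\|_{L^2(\dom)}^{a}\,dt\le T^{1-a/2}\big(\int_0^T\|\na v\|_{L^2(\dom)}^2\,dt\big)^{a/2}$, and then apply H\"older in $\omega$ between $\sup_{t}\|u^{(N)}\|_{L^2(\dom)}$, controlled in $L^p(\Omega)$ by Lemma \ref{lem.estp}, and $\int_0^T\|\na v\|_{L^2(\dom)}^2\,dt$, controlled in $L^1(\Omega)$ by \eqref{3.est3}. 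A short computation shows that the two H\"older exponents are compatible exactly when $p\ge 2\cdot 6(1-\theta)/(2-a)=24/(4-d)$, which is why this moment is needed; for $p=24/(4-d)$ the $\na v$-factor enters the $\omega$-H\"older only to the first power, so it is absorbed by \eqref{3.est3}. The second term requires nothing new: $\E\int_0^T\|u_i^{(N)}\|_{L^2(\dom)}^{6}\,dt\le T\,\E\big(\sup_{t\in(0,T)}\|u^{(N)}\|_{L^2(\dom)}^{6}\big)$, which is finite by Lemma \ref{lem.estp} since $24/(4-d)\ge 6$ for $d\le 3$.

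Summing over $i=1,\dots,n$ and using $\|(u^{(N)})^2\|_{L^2(\dom)}^3\le C(n)\sum_{i=1}^n\|(u_i^{(N)})^2\|_{L^2(\dom)}^3$ then yields the claimed estimate, with a constant $C_3$ depending on $\E\|u^0\|_{L^2(\dom)}^{24/(4-d)}$, $C_\sigma$, $T$, $\alpha$, $n$, and $\dom$. I expect the main obstacle to be purely the exponent bookkeeping: one must choose the Gagliardo--Nirenberg parameter $\theta$ and the two H\"older pairs so that the powers of $\na v$, of $\sup_{t}\|u^{(N)}\|_{L^2(\dom)}$, and of $\int_0^T\|\na v\|_{L^2(\dom)}^2\,dt$ match exactly the bounds available from Lemmas \ref{lem.est1} and \ref{lem.estp}, and to verify that the borderline choice $p=24/(4-d)$ is consistent with $d\le 3$ (equivalently $a=3\theta<2$). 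As noted in Remark \ref{rem.assump}(i), a more refined interpolation would lower the moment of $u^0$ required.
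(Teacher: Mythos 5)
Your proposal is correct and follows essentially the same route as the paper: a Gagliardo--Nirenberg interpolation with $\theta=d/(2+d)$ applied to $(u_i^{(N)})^2$, followed by H\"older in time and then in $\omega$ with the borderline moment $p=24/(4-d)$ from Lemma \ref{lem.estp} and the gradient bound \eqref{3.est3}. The only (harmless) difference is that you use the homogeneous form of Gagliardo--Nirenberg with $\|\na v\|_{L^2}$ plus an additive $\|v\|_{L^1}$ term, whereas the paper interpolates against $\|(u^{(N)})^2\|_{H^1}$; your variant in fact makes the use of \eqref{3.est3} more explicit.
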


\begin{proof}
By the Gagliardo-Nirenberg inequality with $\theta=d/(2+d)$ and the H\"older
inequality with $q=2(2+d)/(3d)$ and $q'=2(2+d)/(4-d)$ (here, we need that $d\le 3$), 
we find that
\begin{align*}
  \E\int_0^T\|(u^{(N)})^2\|_{L^2(\dom)}^3 dt 
	&\le C\E\int_0^T\|(u^{(N)})^2\|_{H^1(\dom)}^{3d/(2+d)}
	\|(u^{(N)})^2\|_{L^1(\dom)}^{6/(2+d)}dt \\
	&\le C\E\bigg(\sup_{t\in(0,T)}\|u^{(N)}\|_{L^2(\dom)}^{12/(2+d)}
	\int_0^T\|(u^{(N)})^2\|_{H^1(\dom)}^{3d/(2+d)}dt\bigg) \\
	&\le C\bigg\{\E\bigg(\sup_{t\in(0,T)}\|u^{(N)}\|_{L^2(\dom)}^{24/(4-d)}
	\bigg)\bigg\}^{1/q'}
	\bigg\{\E\int_0^T\|(u^{(N)})^2\|_{H^1(\dom)}^2 dt\bigg\}^{1/q}.
\end{align*}
The first factor is uniformly bounded by \eqref{3.estp} with $p=24/(4-d)$
and the second factor is uniformly bounded as a consequence of \eqref{3.est1}
and \eqref{3.est2}.
\end{proof}


\subsection{Tightness}\label{sec.tight}

The aim of this subsection is to prove that the sequence of laws of $u^{(N)}$ 
is tight on a certain topological space. For this, we introduce the following
spaces:
\begin{itemize}
\item $C^0([0,T];H^3(\dom)')$ is the space of continuous functions
$u:[0,T]\to H^3(\dom)'$ with the topology $\mathcal{T}_1$ induced by the norm
$\|u\|_{C^0([0,T];H^3(\dom)')}=\sup_{t\in(0,T)}\|u(t)\|_{H^3(\dom)'}$;
\item $L^2_w(0,T;H^1(\dom))$ is the space $L^2(0,T;H^1(\dom))$ with the weak
topology $\mathcal{T}_2$;
\item $L^2(0,T;L^2(\dom))$ is the space of square integrable functions 
$u:(0,T)\to L^2(\dom)$ with the topology $\mathcal{T}_3$ induced by the norm
$\|\cdot\|_{L^2(0,T;L^2(\dom))}$;
\item $C^0([0,T];L^2_w(\dom))$ is the space of weakly continuous functions
$u:[0,T]\to L^2(\dom)$ endowed with the weakest topology $\mathcal{T}_4$ such that
for all $h\in L^2(\dom)$, the mappings
$$
  C^0([0,T];L_w^2(\dom))\to C^0([0,T];\R), \quad u\mapsto (u(\cdot),h)_{L^2(\dom)},
$$
are continuous.
\end{itemize}

In particular, convergence in $C^0([0,T];L^2_w(\dom))$ means the following:
$u_n\to u$ in $C^0([0,T];$ $L^2_w(\dom))$ as $n\to\infty$ holds if and only if
$$
  \lim_{n\to\infty}\sup_{t\in(0,T)}|(u_n(t)-u(t),h)_{L^2(\dom)}| = 0
	\quad\mbox{for all }h\in L^2(\dom).
$$

We need another space: Let $r>0$ and $B:=\{u\in L^2(\dom):\|u\|_{L^2(\dom)}\le r\}$.
Let $q$ be the metric compatible with the weak topology on $B$. We define the
following subspace of $C^0([0,T];L^2_w(\dom))$:
\begin{equation}\label{3.C0Bw}
\begin{aligned}
  C^0([0,T];B_w) &= \mbox{set of all weakly continuous functions }
	u:[0,T]\to L^2(\dom)\\[-1mm]
  &\phantom{xx}\mbox{ such that }
  \textstyle\sup_{t\in(0,T)}\|u(t)\|_{L^2(\dom)}\le r.
\end{aligned}
\end{equation}
This space is metrizable with the metric
$q^*(u,v)=\sup_{t\in(0,T)}q(u(t),v(t))$ \cite[Theorem 3.29]{Bre11}.
By the Banach-Alaoglu theorem, $B_w$ is compact \cite[Theorem 3.16]{Bre11},
so, $(C^0([0,T];B_w),q^*)$ is a complete metric space.

The following lemma ensures that any sequence in $C^0([0,T];B)$ which converges
in some space $C^0([0,T];U')$ with $U\subset H^1(\dom)$ is also convergent
in $C^0([0,T];B_w)$. We apply this lemma with $U=H^3(\dom)$.

\begin{lemma}[Lemma 2.1 in \cite{BrMo14}]\label{lem.aux}
Let $u_n:[0,T]\to L^2(\dom)$ ($n\in\N$) be functions satisfying
\begin{align*}
  & \sup_{n\in\N}\sup_{t\in(0,T)}\|u_n(t)\|_{L^2(\dom)}\le r, \\
  & u_n\to u \quad\mbox{in }C^0([0,T];U') \quad\mbox{as }n\to\infty,
\end{align*}
where $U\subset H^1(\dom)$ and $U'$ is the dual space of $U$.
Then $u_n$, $u\in C^0([0,T];B_w)$ and $u_n\to u$ in $C^0([0,T];B_w)$ as
$n\to\infty$.
\end{lemma}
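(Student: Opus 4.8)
The plan is to combine two facts: the uniform $L^2(\dom)$ bound along the sequence, and the convergence of $u_n$ in the weak dual space $C^0([0,T];U')$, to deduce both membership in and convergence within $C^0([0,T];B_w)$. First I would check that each $u_n$, and the limit $u$, actually lies in $C^0([0,T];B_w)$. For $u_n$ this is essentially the hypothesis $\sup_{t\in(0,T)}\|u_n(t)\|_{L^2(\dom)}\le r$ together with the requirement that $u_n$ be weakly continuous as an $L^2(\dom)$-valued map; the latter follows because $u_n\in C^0([0,T];U')$ and $U$ is dense in $L^2(\dom)$ (since $U\subset H^1(\dom)\subset L^2(\dom)$ densely), so for $h\in L^2(\dom)$ one approximates $h$ by elements of $U$ and uses the uniform bound to pass from $U$-testing to $L^2(\dom)$-testing, an $\eps/3$ argument. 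For the limit $u$, I would first argue that for each fixed $t\in[0,T]$ the bounded sequence $(u_n(t))$ in $L^2(\dom)$ has, along a subsequence, a weak limit which must coincide with $u(t)$ viewed in $U'$; hence $u(t)\in L^2(\dom)$ with $\|u(t)\|_{L^2(\dom)}\le r$ by weak lower semicontinuity of the norm, so the whole of $u(t)$ already equals its $U'$-representative and $u:[0,T]\to L^2(\dom)$ is well defined with values in $B$.

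The second step is to upgrade the $C^0([0,T];U')$ convergence to $C^0([0,T];B_w)$ convergence, i.e.\ to show
\[
  \lim_{n\to\infty}\sup_{t\in(0,T)}|(u_n(t)-u(t),h)_{L^2(\dom)}| = 0
  \quad\mbox{for every }h\in L^2(\dom),
\]
equivalently $q^*(u_n,u)\to 0$. Fix $h\in L^2(\dom)$ and $\eps>0$. Pick $g\in U$ with $\|h-g\|_{L^2(\dom)}\le\eps$. Then for every $t$,
\[
  |(u_n(t)-u(t),h)_{L^2(\dom)}|
  \le |(u_n(t)-u(t),g)_{L^2(\dom)}| + |(u_n(t)-u(t),h-g)_{L^2(\dom)}|,
\]
and the second term is bounded by $\|u_n(t)-u(t)\|_{L^2(\dom)}\|h-g\|_{L^2(\dom)}\le 2r\eps$ uniformly in $t$ and $n$, using $\|u_n(t)\|_{L^2(\dom)},\|u(t)\|_{L^2(\dom)}\le r$. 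The first term, after taking $\sup_{t\in(0,T)}$, is bounded by $\|g\|_U\,\|u_n-u\|_{C^0([0,T];U')}$ (since $(v,g)_{L^2(\dom)}=\langle v,g\rangle_{U',U}$ for $v\in L^2(\dom)\subset U'$), which tends to $0$ as $n\to\infty$ by hypothesis. Hence $\limsup_{n\to\infty}\sup_{t\in(0,T)}|(u_n(t)-u(t),h)_{L^2(\dom)}|\le 2r\eps$, and since $\eps>0$ was arbitrary the limit is $0$. This is exactly convergence in $C^0([0,T];B_w)$ by the characterization of its topology via the metric $q^*$ recalled above.

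The main obstacle, such as it is, is bookkeeping the identification of the $U'$-valued limit with an $L^2(\dom)$-valued function and checking weak continuity of $u$ rather than any real difficulty: one must make sure that the weak-$L^2$ limit of $u_n(t)$ does not depend on the chosen subsequence and genuinely equals $u(t)$. This is handled by noting that convergence in $U'$ already pins down the value of $\langle u(t),g\rangle$ for all $g\in U$, and that a density argument then determines the weak-$L^2$ limit uniquely; weak continuity $t\mapsto(u(t),h)_{L^2(\dom)}$ follows from the uniform convergence just proved, as a uniform limit of the continuous functions $t\mapsto(u_n(t),h)_{L^2(\dom)}$. Everything else is the two $\eps/3$-style estimates above, and no compactness beyond Banach--Alaoglu (already invoked in the construction of $C^0([0,T];B_w)$) is needed.
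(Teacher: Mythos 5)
Your proof is correct and is essentially the argument behind the result the paper simply cites (Lemma 2.1 of \cite{BrMo14}, given without proof here): uniform $L^2(\dom)$ bounds plus $C^0([0,T];U')$ convergence, a density/$\eps$-splitting of test functions, and weak lower semicontinuity to place $u(t)$ in $B$. Only two small points are worth making explicit: the density of $U$ in $L^2(\dom)$ is implicitly assumed (harmless, since in the application $U=H^3(\dom)$), and the final passage from $\sup_{t}|(u_n(t)-u(t),h)_{L^2(\dom)}|\to 0$ for every $h$ to $q^*(u_n,u)\to 0$ is justified by writing $q$ via a countable normalized family in $L^2(\dom)$ and using the uniform bound $2r$ to sum the series.
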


We define the space
\begin{equation}\label{3.Z}
  Z_T:=C^0([0,T];H^3(\dom)')\cap L_w^2(0,T;H^1(\dom))\cap L^2(0,T;L^2(\dom))
	\cap C^0([0,T];L_w^2(\dom)),
\end{equation}
endowed with the topology $\mathcal{T}$ which is the maximum of the topologies
$\mathcal{T}_i$, $i=1,2,3,4$, of the corresponding spaces. On this space, we can
formulate a compactness criterion which is analogous to the result
due to Mikulevcius and Rozowskii \cite{MiRo05}. 

\begin{lemma}[Compactness criterion]\label{lem.comp}
Let $(Z_T,\mathcal{T})$ be as defined in \eqref{3.Z}. A set $K\subset Z_T$
is $\mathcal{T}$-relatively compact if the following three conditions hold:
\begin{enumerate}
\item $\sup_{u\in K}\sup_{t\in(0,T)}\|u(t)\|_{L^2(\dom)}<\infty$,
\item $K$ is bounded in $L^2(0,T;H^1(\dom))$, and
\item $\lim_{\delta\to 0}\sup_{u\in K}\sup_{s,t\in(0,T),\,|s-t|\le\delta}
\|u(s)-u(t)\|_{H^3(\dom)'}=0$.
\end{enumerate}
\end{lemma}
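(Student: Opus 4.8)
The plan is to reduce the assertion to a statement about sequences and then, starting from an arbitrary sequence $(u_n)\subset K$, to extract one subsequence that converges to a single limit $u$ with respect to each of the four topologies $\mathcal{T}_1,\ldots,\mathcal{T}_4$ whose maximum is $\mathcal{T}$. The reduction is legitimate because conditions (1) and (2) confine $K$ to the product of the metrizable ball $C^0([0,T];B_w)$ (with $r$ the supremum in (1); see \eqref{3.C0Bw}) and a bounded subset of $L^2(0,T;H^1(\dom))$, which is a separable Hilbert space and hence carries a metrizable weak topology on bounded sets; together with the genuine metric spaces underlying $\mathcal{T}_1$ and $\mathcal{T}_3$, the trace of $\mathcal{T}$ on the relevant bounded set is metrizable, so $\mathcal{T}$-relative compactness of $K$ is equivalent to sequential relative compactness.

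First I would obtain $\mathcal{T}_1$-convergence by the Arzel\`a--Ascoli theorem in $C^0([0,T];H^3(\dom)')$: the embedding $H^3(\dom)\hookrightarrow L^2(\dom)$ is compact, hence so is the adjoint embedding $L^2(\dom)\hookrightarrow H^3(\dom)'$; thus by (1) the set $\{u_n(t):n\in\N\}$ is relatively compact in $H^3(\dom)'$ for every $t\in[0,T]$, while (3) provides equicontinuity. This yields a subsequence (not relabeled) and $u\in C^0([0,T];H^3(\dom)')$ with $u_n\to u$ in $\mathcal{T}_1$. Convergence in $\mathcal{T}_4$ follows at once: since $\sup_n\sup_t\|u_n(t)\|_{L^2(\dom)}\le r$ and $H^3(\dom)\subset H^1(\dom)$, Lemma \ref{lem.aux} applied with $U=H^3(\dom)$ gives $u,u_n\in C^0([0,T];B_w)$ and $u_n\to u$ in $C^0([0,T];B_w)$, which is precisely convergence in $\mathcal{T}_4$.

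The core of the argument is the strong convergence in $L^2(0,T;L^2(\dom))$ (topology $\mathcal{T}_3$), which I would establish by an Aubin--Lions--Simon type estimate. Ehrling's lemma for the embeddings $H^1(\dom)\hookrightarrow L^2(\dom)\hookrightarrow H^3(\dom)'$, the first of which is compact, gives for every $\eps>0$ a constant $C_\eps>0$ such that $\|w\|_{L^2(\dom)}\le\eps\|w\|_{H^1(\dom)}+C_\eps\|w\|_{H^3(\dom)'}$ for all $w\in H^1(\dom)$. Applying this to $w=u_n(t)-u_m(t)$, squaring, and integrating over $(0,T)$ yields
$$\int_0^T\|u_n-u_m\|_{L^2(\dom)}^2\,dt \le 8\eps^2\sup_{k\in\N}\|u_k\|_{L^2(0,T;H^1(\dom))}^2 + 2C_\eps^2 T\,\|u_n-u_m\|_{C^0([0,T];H^3(\dom)')}^2.$$
By (2) the first term is uniformly small as $\eps\to0$, and by the $\mathcal{T}_1$-convergence the second term tends to $0$ as $n,m\to\infty$; hence $(u_n)$ is Cauchy, thus convergent, in $L^2(0,T;L^2(\dom))$, and its limit must coincide with $u$. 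Finally, for $\mathcal{T}_2$ I would invoke (2) and the reflexivity of $L^2(0,T;H^1(\dom))$ to get weak subsequential limits, all of which equal $u$ because of the strong $L^2(0,T;L^2(\dom))$ convergence just obtained; uniqueness of the limit then yields $u_n\rightharpoonup u$ in $L^2(0,T;H^1(\dom))$. Collecting these four convergences shows $u_n\to u$ in $(Z_T,\mathcal{T})$ with $u\in Z_T$, so $K$ is $\mathcal{T}$-relatively compact.

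I expect the $\mathcal{T}_3$-step to be the main obstacle: upgrading the weak and pointwise information to strong convergence in $L^2(0,T;L^2(\dom))$ is exactly where the compactness of the intermediate embedding $H^1(\dom)\hookrightarrow L^2(\dom)$ is used, balanced against the uniform $L^2(0,T;H^1(\dom))$ bound (2) and the $H^3(\dom)'$-equicontinuity (3) in the roles of the two ``endpoints'' of an Aubin--Lions--Simon scheme. A secondary, purely topological nuisance is that $\mathcal{T}$ is not metrizable globally; this is circumvented, as noted above, by using (1)--(2) to trap $K$ in a metrizable subset of $Z_T$, so that compactness can be verified along sequences.
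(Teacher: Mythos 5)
Your proof is correct and follows essentially the same route as the paper, which simply cites \cite[Lemma 2.3]{BrMo14} together with Dubinskii's theorem: the compact embedding $H^1(\dom)\hookrightarrow L^2(\dom)$ combined with the bound (2) and the $H^3(\dom)'$-equicontinuity (3) yields the strong convergences in $C^0([0,T];H^3(\dom)')$ and $L^2(0,T;L^2(\dom))$, the bound (2) gives the weak convergence in $L^2(0,T;H^1(\dom))$, and Lemma \ref{lem.aux} handles $C^0([0,T];B_w)$. The only difference is that you unpack the cited Dubinskii/Simon-type compactness explicitly via Arzel\`a--Ascoli and Ehrling's lemma (and address the metrizability of $\mathcal{T}$ on the bounded set), which makes the argument self-contained but not substantively different.
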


We refer to \cite[Lemma 2.3]{BrMo14} for a proof. The result follows since
the embeddings $H^1(\dom)\hookrightarrow L^2(\dom)\hookrightarrow H^3(\dom)'$
are continuous and the embedding $H^1(\dom)\hookrightarrow L^2(\dom)$ is compact,
such that we can apply Dubinskii's Theorem \cite{Dub65} (also see \cite{Sim87})
to a sequence $(u_n)_{n\in\N}\subset K$
to conclude that there exists a subsequence of $(u_n)_{n\in\N}$ that is convergent in
$C^0([0,T];H^3(\dom)')$. By Lemma \ref{lem.aux}, this subsequence is also convergent
in $C^0([0,T];B_w)$. 

The compactness criterion in Lemma \ref{lem.comp} allows for a proof of the
following tightness criterion taken from \cite[Corollary 2.6]{BrMo14}.

\begin{theorem}[Tightness criterion]\label{thm.tight}
Let $H$, $V$, and $U$ be separable Hilbert spaces such that the embeddings
$U\hookrightarrow V\hookrightarrow H$ are dense and continuous and the
embedding $V\hookrightarrow H$ is compact. Furthermore, let
$(X_n)_{n\in\N}$ be a sequence of continuous $\Fil$-adapted $U'$-valued
stochastic processes such that
\begin{enumerate}
\item there exists $C>0$ such that
$$
  \sup_{n\in\N}\E\bigg(\sup_{t\in(0,T)}\|X_n(t)\|_H^2\bigg) \le C,
$$
\item there exists $C>0$ such that
$$
  \sup_{n\in\N}\E\bigg(\int_0^T\|X_n(t)\|_V^2 dt\bigg) \le C,
$$
\item $(X_n)_{n\in\N}$ satisfies the Aldous condition in $U'$
(see Definition \ref{def.ald} in the appendix).
\end{enumerate}
Furthermore, let $\Prob_n$ be the law of $X_n$ on $Z_T$. Then $(\Prob_n)_{n\in\N}$ 
is tight on $Z_T$.
\end{theorem}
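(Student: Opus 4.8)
The plan is to derive tightness from the deterministic compactness criterion of Lemma~\ref{lem.comp}, applied in its abstract form with $L^2(\dom)$, $H^1(\dom)$, $H^3(\dom)'$ replaced by $H$, $V$, $U'$ respectively (the proof of Lemma~\ref{lem.comp} uses only denseness, continuity and compactness of the embeddings, which are exactly the hypotheses here). Recall that $(\Prob_n)_{n\in\N}$ is tight on $(Z_T,\mathcal T)$ provided that for each $\eps>0$ there is a $\mathcal T$-compact set $K_\eps\subset Z_T$ with $\inf_{n\in\N}\Prob_n(K_\eps)\ge 1-\eps$. I would fix $\eps>0$ and build $K_\eps$ as the $\mathcal T$-closure of an intersection $K_\eps^1\cap K_\eps^2\cap K_\eps^3$ of three sets, one for each of the three conditions in Lemma~\ref{lem.comp}, distributing a probability budget of $\eps/3$ to each. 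One first checks that, thanks to assumptions (1)--(2) and the continuity of the paths, $X_n$ takes values in $Z_T$ $\Prob$-a.s., so that the laws $\Prob_n$ on $Z_T$ are well defined.

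For the first two sets, Chebyshev's inequality in $L^2(\Omega)$ applied to assumption (1) gives $R_1=R_1(\eps)$ with $\sup_{n\in\N}\Prob(\sup_{t\in(0,T)}\|X_n(t)\|_H>R_1)\le\eps/3$, and Chebyshev's inequality in $L^1(\Omega)$ applied to assumption (2) gives $R_2=R_2(\eps)$ with $\sup_{n\in\N}\Prob(\int_0^T\|X_n(t)\|_V^2\,dt>R_2)\le\eps/3$; set $K_\eps^1=\{u:\sup_{t\in(0,T)}\|u(t)\|_H\le R_1\}$ and $K_\eps^2=\{u:\int_0^T\|u(t)\|_V^2\,dt\le R_2\}$. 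These realize conditions (1) and (2) of Lemma~\ref{lem.comp} with probability at least $1-\eps/3$ each.

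The heart of the matter is the third set. For $u\in C^0([0,T];U')$ put $w_T(u,\delta):=\sup_{s,t\in[0,T],\,|s-t|\le\delta}\|u(s)-u(t)\|_{U'}$. I would show that the Aldous condition (3) produces sequences $\delta_k\downarrow 0$ and $\eta_k\downarrow 0$ such that $\sup_{n\in\N}\Prob(w_T(X_n,\delta_k)>\eta_k)\le(\eps/3)2^{-k}$ for all $k\in\N$. Concretely, fix $\eta_k=1/k$ and $\eps_k=(\eps/3)2^{-k}$; the Aldous condition supplies $\delta_k>0$ controlling $\Prob(\|X_n(\tau+\theta)-X_n(\tau)\|_{U'}\ge\eta_k/3)$ uniformly over $n$, over $\theta\in[0,\delta_k]$, and over stopping times $\tau\le T$. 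One then upgrades this increment bound to a bound on the full oscillation: partition $[0,T]$ into $\lceil T/\delta_k\rceil$ intervals of length $\le\delta_k$; any pair $s,t$ with $|s-t|\le\delta_k$ lies in at most two consecutive intervals, so $\|u(s)-u(t)\|_{U'}$ is dominated by oscillations within single subintervals, which in turn one bounds by increments measured from the left endpoint after introducing the first time the oscillation over such a subinterval exceeds $\eta_k$ (a stopping time), so that the Aldous condition applies. This is precisely the continuous-path analogue of the classical modulus-of-continuity lemmas of Aldous and Joffe--M\'etivier, and is the content of \cite[Corollary~2.6]{BrMo14}. Setting $K_\eps^3:=\bigcap_{k\in\N}\{u:w_T(u,\delta_k)\le\eta_k\}$, every $u\in K_\eps^3$ satisfies $\lim_{\delta\to0}w_T(u,\delta)=0$, which is condition (3) of Lemma~\ref{lem.comp}, and $\sup_{n\in\N}\Prob(X_n\notin K_\eps^3)\le\sum_{k\ge1}(\eps/3)2^{-k}=\eps/3$. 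Finally, with $K_\eps$ the $\mathcal T$-closure of $K_\eps^1\cap K_\eps^2\cap K_\eps^3$, Lemma~\ref{lem.comp} gives that $K_\eps$ is $\mathcal T$-compact, while $\sup_{n\in\N}\Prob_n(Z_T\setminus K_\eps)\le\eps$; hence $(\Prob_n)$ is tight. I expect the only genuine obstacle to be this third step, namely converting the stopping-time increment control of the Aldous condition into a uniform bound on the path modulus $w_T$; the first two steps are routine Chebyshev estimates.
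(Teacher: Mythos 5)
Your proposal is correct and follows essentially the route the paper indicates: the paper does not write out a proof but cites the result as \cite[Corollary 2.6]{BrMo14}, remarking that it follows from the compactness criterion of Lemma \ref{lem.comp}, and your argument (Chebyshev bounds realizing conditions (1)--(2), the Aldous-to-modulus-of-continuity step for condition (3), then taking the $\mathcal{T}$-closure of the intersection) is precisely that proof. The only cosmetic point is that condition (3) of Lemma \ref{lem.comp} asks for the modulus to vanish uniformly over the set, not just pointwise, which your construction in fact delivers since the same pairs $(\delta_k,\eta_k)$ work simultaneously for all $u\in K_\eps^3$.
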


The main result of this subsection is the tightness of the laws 
$\Law(u^{(N)})$ of the solutions $u^{(N)}$
to \eqref{2.approx1}-\eqref{2.approx2}. 

\begin{lemma}\label{lem.ut}
The set of measures $\{\Law(u^{(N)}):N\in\N\}$ is tight on $(Z_T,\mathcal{T})$.
\end{lemma}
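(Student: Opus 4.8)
The plan is to verify the three hypotheses of the tightness criterion in Theorem~\ref{thm.tight} for the sequence $X_n=u^{(N)}$, applied with the Gelfand triple $U=H^3(\dom)\hookrightarrow V=H^1(\dom)\hookrightarrow H=L^2(\dom)$. The first two conditions are immediate: condition~(1) of Theorem~\ref{thm.tight} is exactly estimate~\eqref{3.est1}, and condition~(2) is exactly estimate~\eqref{3.est2}, both established uniformly in $N$ in Lemma~\ref{lem.est1}. So the only real work is to verify the third hypothesis, namely that $(u^{(N)})_{N\in\N}$ satisfies the Aldous condition in $H^3(\dom)'$.

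To check the Aldous condition, I would fix a sequence of stopping times $(\tau_N)$ bounded by $T$ and a sequence $\theta_N\downarrow 0$, and estimate $\E\|u^{(N)}(\tau_N+\theta_N)-u^{(N)}(\tau_N)\|_{H^3(\dom)'}$ (or the corresponding probability via Chebyshev), showing it tends to $0$ uniformly in $N$. Using the mild/integral form of the Galerkin equation~\eqref{2.approx1}, the increment splits into a deterministic drift part $\int_{\tau_N}^{\tau_N+\theta_N}\Pi_N\diver(\sum_j A_{ij}(u^{(N)})\na u_j^{(N)})\,ds$ and a stochastic part $\int_{\tau_N}^{\tau_N+\theta_N}\Pi_N(\sum_j\sigma_{ij}(u^{(N)}))\,dW_j$. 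For the drift term, I would pair with a test function $\phi\in H^3(\dom)$, integrate by parts to move the divergence onto $\na\phi$, and bound $\|A_{ij}(u^{(N)})\na u_j^{(N)}\|_{L^1(\dom)}$ using the structure~\eqref{1.A}: the quadratic growth of $A_{ij}$ means one needs $\|u^{(N)}\|_{L^4}^2\|\na u^{(N)}\|_{L^2}$ type quantities, which are controlled by combining the gradient bound~\eqref{3.est2}, the higher-moment bound~\eqref{3.estp}, and the bound on $\|(u^{(N)})^2\|_{L^2}$ from Lemma~\ref{lem.3} together with $H^1\hookrightarrow L^6$ in $d\le 3$ (this is precisely where $H^3(\dom)'$, with $H^3\hookrightarrow W^{1,\infty}$, gives enough room); the H\"older-in-time factor $\theta_N$ (or $\theta_N^{1/2}$ via Cauchy--Schwarz in $s$) then forces this contribution to zero. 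For the stochastic term, I would use the It\^o isometry together with assumption~\eqref{1.sigma} and estimate~\eqref{3.est1} to get $\E\|\int_{\tau_N}^{\tau_N+\theta_N}\Pi_N\sigma\,dW\|_{H^3(\dom)'}^2 \le C\,\E\int_{\tau_N}^{\tau_N+\theta_N}\|\sigma(u^{(N)})\|_{\T_2(Y;L^2(\dom))}^2\,ds \le C\theta_N(1+\sup_N\E\sup_t\|u^{(N)}\|_{L^2}^2)$, which again vanishes as $\theta_N\to 0$ uniformly in $N$; the stopping times cause no difficulty because of the optional-sampling/strong-Markov-type bounds built into the BDG and It\^o-isometry estimates, exactly as recalled in the appendix.

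Once the Aldous condition is verified, Theorem~\ref{thm.tight} directly yields that the laws $\Prob_N=\Law(u^{(N)})$ are tight on $(Z_T,\mathcal T)$, which is the assertion of the lemma. The main obstacle I anticipate is the drift term's time-increment estimate: one must be careful that the nonlinear flux $A_{ij}(u^{(N)})\na u_j^{(N)}$, which is cubic in $u$ and its gradient, is integrable in space-time uniformly in $N$ — this is exactly why Lemma~\ref{lem.3} (and hence the higher moment bound of Lemma~\ref{lem.estp} with $p=24/(4-d)$) was proved beforehand, and the argument has to invoke it at the right point rather than relying on $L^2$-bounds alone. The stochastic part, by contrast, is routine given~\eqref{1.sigma} and~\eqref{3.est1}.
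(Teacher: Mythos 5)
Your proposal is correct and follows essentially the same route as the paper: conditions (1) and (2) of Theorem \ref{thm.tight} from \eqref{3.est1}--\eqref{3.est2}, and the Aldous condition in $H^3(\dom)'$ by splitting the increment into drift and noise, bounding the drift via the quadratic structure of $A_{ij}$, the embedding $H^3(\dom)\hookrightarrow W^{1,\infty}(\dom)$, Lemma \ref{lem.3} and the gradient bound, and the noise via the It\^o isometry with \eqref{1.sigma}, concluding by Chebyshev. The only (harmless) deviation is your slightly more direct bound of the stochastic increment using the sup-in-time moment, where the paper uses a H\"older-in-time argument giving $\theta^{1/3}$.
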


\begin{proof}
The idea of the proof is to apply Theorem \ref{thm.tight} with
$U=H^3(\dom)$, $V=H^1(\dom)$, and $H=L^2(\dom)$. 
In view of estimates \eqref{3.est1} and \eqref{3.est2}, conditions (1) and (2)
of Theorem \ref{thm.tight} are fulfilled. It remains to show that
$(u^{(N)})_{N\in\N}$ satisfies the Aldous condition in $H^3(\dom)'$. To this end,
let $(\tau_N)_{N\in\N}$ be a sequence of $\Fil$-stopping times such 
that $0\le\tau_N\le T$. Let $t\in[0,T]$ and $\phi\in H^3(\dom)$. 
Then \eqref{2.approx1} can be written as
\begin{align}
  \langle u_i^{(N)}(t),\phi\rangle
	&= \langle \Pi_N(u_i^0),\phi\rangle
	- \sum_{j=1}^n\int_0^t\big\langle A_{ij}(u^{(N)})\na u_j^{(N)},\na\Pi_N\phi
	\big\rangle ds \nonumber \\
	&\phantom{xx}{}
	+ \sum_{j=1}^n\bigg\langle\int_0^t\Pi_N\big(\sigma_{ij}(u^{(N)}(s))\big)dW_j(s),\phi
	\bigg\rangle \nonumber \\
  &=: J_1^{(N)} + J_2^{(N)}(t) + J_3^{(N)}(t), \label{3.J}
\end{align}
where $\langle\cdot,\cdot\rangle$ is the dual pairing between 
$H^3(\dom)'$ and $H^3(\dom)$.
We estimate each term on the right-hand side individually. 

First, consider the term involving the diffusion coefficients. Let $\theta>0$. Then,
using the (at most) quadratic dependence of $A_{ij}$ on $u_k$ and the
continuous embedding $H^3(\dom)\hookrightarrow W^{1,\infty}(\dom)$
(this is another instance where we use $d\le 3$), we find that
\begin{align*}
  \E\bigg|&\int_{\tau_N}^{\tau_N+\theta}
	\big\langle A_{ij}(u^{(N)})\na u_j^{(N)},\na\Pi_N\phi\big\rangle ds\bigg| \\
	&\le \E\int_{\tau_N}^{\tau_N+\theta}\|A_{ij}(u^{(N)})\|_{L^2(\dom)}
	\|\na u_j^{(N)}\|_{L^2(\dom)}\|\na\phi\|_{L^\infty(\dom)} ds \\
	&\le \E\bigg(\int_{\tau_N}^{\tau_N+\theta}\big(1+\|(u^{(N)})^2\|_{L^2(\dom)}\big)
	\|\na u^{(N)}\|_{L^2(\dom)}ds\bigg)\|\phi\|_{H^3(\dom)} \\
	&\le \E\bigg(\big(\theta^{1/2} + \theta^{1/6}\|(u^{(N)})^2\|_{L^3(0,T;L^2(\dom))}\big)
	\|\na u^{(N)}\|_{L^2(0,T;L^2(\dom))}\bigg)\|\phi\|_{H^3(\dom)} \\
	&\le \bigg\{\theta^{1/2}
	+ \theta^{1/6}\bigg(\E\bigg(\int_0^T\|(u^{(N)})^2\|_{L^2(\dom)}^3 dt\bigg)^{2/3}
	\bigg)^{1/2} \\
	&\phantom{xx}{}\times\bigg(\E\int_0^T\|\na u^{(N)}\|_{L^2(\dom)}^2 dt\bigg)^{1/2}\bigg\}
	\|\phi\|_{H^3(\dom)},
\end{align*}
where in the last two inequalities we applied the H\"older inequality with respect to
time and then with respect to the random variable. The vector 
$(u^{(N)}))^2$ consists of elements $(u_i^{(N)}))^2$ for $i=1,\ldots,N$. 
Taking into account the estimates
from Lemmas \ref{lem.est1} and \ref{lem.3}, we deduce that
\begin{equation}\label{3.theta1}
  \E\bigg|\int_{\tau_N}^{\tau_N+\theta}
	\big\langle A_{ij}(u^{(N)})\na u_j^{(N)},\na\Pi_N\phi\big\rangle ds\bigg| 
	\le C\theta^{1/6}\|\phi\|_{H^3(\dom)}. 
\end{equation}
For the stochastic term, we use assumption \eqref{1.sigma} on $\sigma$,
the It\^{o} isometry (see Proposition \ref{prop.iso}), 
and the H\"older inequality to obtain
\begin{align}
  \E\bigg|\bigg\langle &\int_{\tau_N}^{\tau_N+\theta}
	\Pi_N(\sigma_{ij}(u^{(N)}(s)))dW_j(s),\phi\bigg\rangle\bigg|^2 \nonumber \\
	&\le \E\bigg(\int_{\tau_N}^{\tau_N+\theta}\|\sigma(u^{(N)})\|_{\T_2(Y;L^2(\dom))}^2
	dt\bigg)\|\phi\|_{L^2(\dom)}^2 \nonumber \\
	&\le C_\sigma\E\bigg(\int_{\tau_N}^{\tau_N+\theta}\big(1+\|u^{(N)}\|_{L^2(\dom)}^2
	\big)dt\bigg)\|\phi\|_{L^2(\dom)}^2 \nonumber \\
	&\le C\bigg(\theta + \theta^{1/3}\bigg(\E\int_0^T\|u^{(N)}\|_{L^2(\dom)}^3 dt
	\bigg)^{2/3}\bigg)\|\phi\|_{L^2(\dom)}^2 
  \le C\theta^{1/3}\|\phi\|_{L^2(\dom)}^2. \label{3.theta2}
\end{align}

Next, let $\kappa>0$ and $\eps>0$. By the definition of the $H^3(\dom)'$ norm, the 
Chebyshev inequality, and estimate \eqref{3.theta1}, we have
\begin{align*}
  \Prob\Big\{ & \|J_2^{(N)}(\tau_N+\theta)-J_2^{(N)}(\tau_N)\|_{H^3(\dom)'}\ge\kappa\Big\}
  \le \frac{1}{\kappa}\E\|J_2^{(N)}(\tau_N+\theta)-J_2^{(N)}(\tau_N)\|_{H^3(\dom)'} \\
	&= \frac{1}{\kappa}\sup_{\|\phi\|_{H^3(\dom)}=1}\E
	\big|\big\langle J_2^{(N)}(\tau_N+\theta)-J_2^{(N)}(\tau_N),\phi\big\rangle\big|
	\le \frac{C\theta^{1/6}}{\kappa}.
\end{align*}
Thus, choosing $\delta_1=(\kappa\eps/C)^6$, we infer that
$$
  \sup_{N\in\N}\sup_{0<\theta<\delta_1}
	\Prob\Big\{\|J_2^{(N)}(\tau_N+\theta)-J_2^{(N)}(\tau_N)
	\|_{H^3(\dom)'}\ge\kappa\Big\} \le \eps.
$$
In a similar way, it follows that
\begin{align*}
  \Prob\Big\{\|J_3^{(N)}(\tau_N+\theta)-J_3^{(N)}(\tau_N)\|_{H^3(\dom)'}\ge\kappa\Big\}
  &\le \frac{1}{\kappa^2}\E\|J_3^{(N)}(\tau_N+\theta)
	-J_3^{(N)}(\tau_N)\|_{H^3(\dom)'}^2 \\
	&\le \frac{C_2\theta^{1/3}}{\kappa^2},
\end{align*}
and choosing $\delta_2=(\kappa^2\eps/C)^3$ gives
$$
  \sup_{N\in\N}\sup_{0<\theta<\delta_1}
	\Prob\Big\{\|J_3^{(N)}(\tau_N+\theta_2)-J_3^{(N)}(\tau_N)
	\|_{H^3(\dom)'}\ge\kappa\Big\} \le \eps.
$$
This shows that the Aldous condition holds for all three terms $J_i^{(N)}$,
$i=1,2,3$. Consequently, in view of \eqref{3.J},
it also holds for $(u^{(N)})_{N\in\N}$. We conclude the proof
by invoking Theorem \ref{thm.tight}.
\end{proof}


\subsection{Convergence of the approximate solutions}\label{sec.conv}

First, we show that the space $Z_T$, defined in \eqref{3.Z}, verifies the
assumption of the Skorokhod-Jakubowski theorem (see Theorem \ref{thm.skoro} in the
appendix). More precisely, we prove that on each space in definition \eqref{3.Z}
of $Z_T$, there exists a countable set of continuous real-valued functions
separating points. 

\begin{lemma}\label{lem.Z}
The topological space $Z_T$, defined in \eqref{3.Z}, satisfies the assumption of
Theorem \ref{thm.skoro}.
\end{lemma}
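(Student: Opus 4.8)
The plan is to verify, space by space, the hypothesis of the Skorokhod--Jakubowski theorem, namely the existence of a countable family of $\mathcal{T}$-continuous real-valued functions on $Z_T$ that separates points. Since $\mathcal{T}$ is the maximum of the four topologies $\mathcal{T}_1,\dots,\mathcal{T}_4$, every function continuous with respect to one of the $\mathcal{T}_i$ is automatically $\mathcal{T}$-continuous; so it suffices to produce such a separating family on each of the four constituent spaces and then take the union. Because the union of finitely many countable sets is countable, this yields a countable $\mathcal{T}$-continuous family which separates points of $Z_T$ (two distinct elements of $Z_T$ are distinct as elements of at least one constituent space, hence are separated by a function from that space's family).

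First, for $C^0([0,T];H^3(\dom)')$: this is a separable Banach space (its dual is separable, and a standard argument shows the space of continuous paths into a separable space is separable), so by the Hahn--Banach theorem one picks a countable dense sequence $(g_k)$ in the dual and sets $f_k(u)=\langle g_k,u\rangle_{C^0([0,T];H^3(\dom)')',\,C^0([0,T];H^3(\dom)')}$; these are $\mathcal{T}_1$-continuous and separate points. For $L^2_w(0,T;H^1(\dom))$: fix a countable dense set $(\psi_k)$ in $L^2(0,T;H^1(\dom))$ and set $f_k(u) = \int_0^T (u(t),\psi_k(t))_{H^1(\dom)}\,dt$; each is by definition weakly continuous, and density of $(\psi_k)$ guarantees they separate points. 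For $L^2(0,T;L^2(\dom))$: this is a separable Hilbert space, so again a countable dense family $(\chi_k)$ gives separating continuous functionals $f_k(u)=\int_0^T(u(t),\chi_k(t))_{L^2(\dom)}\,dt$.

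The delicate case is $C^0([0,T];L^2_w(\dom))$, whose topology $\mathcal{T}_4$ is not a norm topology. Here the plan is to fix a countable dense set $(h_m)$ in $L^2(\dom)$ and a countable dense set $(t_\ell)$ in $[0,T]$, and to take the countable family of evaluation-pairing functionals $u \mapsto (u(t_\ell),h_m)_{L^2(\dom)}$. Each such map is $\mathcal{T}_4$-continuous by the very definition of $\mathcal{T}_4$ (for fixed $h_m$ the map $u\mapsto(u(\cdot),h_m)_{L^2(\dom)}$ is continuous into $C^0([0,T];\R)$, and point evaluation at $t_\ell$ is continuous on $C^0([0,T];\R)$). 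To see this family separates points: if $u\neq v$ in $C^0([0,T];L^2_w(\dom))$ then $u(t)\neq v(t)$ in $L^2(\dom)$ for some $t\in[0,T]$, hence $(u(t)-v(t),h)_{L^2(\dom)}\neq 0$ for some $h\in L^2(\dom)$; by weak continuity in $t$ and density of $(t_\ell)$ and $(h_m)$ one finds indices with $(u(t_\ell),h_m)_{L^2(\dom)}\neq (v(t_\ell),h_m)_{L^2(\dom)}$. I expect this separating-points verification for $\mathcal{T}_4$ to be the only step requiring genuine care; the other three are routine consequences of separability of Hilbert/Banach spaces. Finally, the union of the four countable families is the desired countable $\mathcal{T}$-continuous separating family, so Theorem \ref{thm.skoro} applies to $Z_T$.
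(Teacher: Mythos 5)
Your overall strategy is the same as the paper's: exhibit, on each of the four constituent spaces, a countable family of real-valued functions that are continuous for the corresponding topology $\mathcal{T}_i$ (hence for $\mathcal{T}$, which is finer) and separate points, then take the union. Your treatments of $L^2_w(0,T;H^1(\dom))$, of $L^2(0,T;L^2(\dom))$, and of $C^0([0,T];L^2_w(\dom))$ coincide with the paper's (for the last space the paper uses exactly your family $u\mapsto (u(t),w_m)_{L^2(\dom)}$ with $t\in\mathbb{Q}_T$ and $(w_m)$ a dense sequence), whereas for the two complete, separable, metrizable factors the paper simply cites the general fact (Badrikian) that such spaces always satisfy the hypothesis of the Skorokhod--Jakubowski theorem.

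The one step that would fail as written is your treatment of $C^0([0,T];H^3(\dom)')$: you propose to ``pick a countable dense sequence $(g_k)$ in the dual'' of this path space. Separability of a Banach space does not imply separability of its dual ($\ell^1$ versus $\ell^\infty$), and here the dual is a space of vector-valued measures that is genuinely non-separable in norm: already for $C^0([0,T];\R)$ the dual contains the Dirac measures $\delta_t$, $t\in[0,T]$, at mutual distance $2$. So no countable dense sequence in the dual exists. The conclusion you need is nevertheless true, and the repair is standard: take a countable dense set $(x_k)$ in $C^0([0,T];H^3(\dom)')$ and, by Hahn--Banach, norming functionals $g_k$ with $\|g_k\|=1$, $g_k(x_k)=\|x_k\|$, which already separate points; or use the countable family $u\mapsto\langle u(t_\ell),\phi_m\rangle$ with $t_\ell\in\mathbb{Q}_T$ and $(\phi_m)$ dense in $H^3(\dom)$, in parallel with your argument for $C^0([0,T];L^2_w(\dom))$; or, as the paper does, invoke the general fact that every separable, completely metrizable space admits a countable separating family of continuous functions (for instance the distances to a countable dense set), which handles $C^0([0,T];H^3(\dom)')$ and $L^2(0,T;L^2(\dom))$ simultaneously. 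With that correction your proof is complete and matches the paper's.
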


\begin{proof}
Since the spaces $C^0([0,T];H^3(\dom)')$ and $L^2(0,T;L^2(\dom))$
are separable, metrizable, and complete, the assumption of Theorem \ref{thm.skoro} 
is satisfied; see \cite[Expos\'e 8]{Bad70}.
For the space $L_w^2(0,T;H^1(\dom))$, it is sufficient to define
$$
  f_m(u) = \int_0^T (u(t),v_m(t))_{H^1(\dom)}dt\in\R, \quad 
	\mbox{where }u\in L_w^2(0,T;H^1(\dom)),\ m\in\N,
$$
and $(v_m)_{m\in\N}$ is a dense subset of $L^2(0,T;H^1(\dom))$. 

It remains to consider the space $C^0([0,T];L_w^2(\dom))$. Let $(w_m)_{m\in\N}$
be a dense subset of $L^2(0,T;L^2(\dom))$ and let $\mathbb{Q}_T$ be the set of
rational numbers from the interval $[0,T]$. Then the family
$\{f_{m,t}:m\in\N,$ $t\in\mathbb{Q}_T\}$, defined by
$$
  f_{m,t}(u) = (u(t),w_m)_{L^2(\dom)}\in\R, \quad\mbox{where }
	u\in C^0([0,T];L_w^2(\dom)),\ m\in\N,\ t\in\mathbb{Q}_T,
$$
consists of continuous functions separating points in $C^0([0,T];L_w^2(\dom))$.
\end{proof}

In view of Lemma \ref{lem.Z} and Theorem \ref{thm.skoro}, we infer the following result.

\begin{corollary}\label{coro.Z}
Let $(\eta_n)_{n\in\N}$ be a sequence of $Z_T$-valued random variables such that
their laws $\Law(\eta_n)$ on $(Z_T,\mathcal{T})$ form a tight sequence of probability
measures. Then there exists a subsequence $(\eta_k)_{k\in\N}$, which is not
relabeled, a probability space $(\widetilde\Omega,\widetilde\F,\widetilde\Prob)$,
and $Z_T$-valued random variables $\widetilde\eta$, $\widetilde\eta_k$ with $k\in\N$
such that the variables $\eta_k$ and $\widetilde\eta_k$ have the same laws on $Z_T$
and $(\widetilde\eta_k)_{k\in\N}$ converges to $\widetilde\eta$ a.s.\ on 
$\widetilde\Omega$.
\end{corollary}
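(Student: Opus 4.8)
The plan is to obtain Corollary~\ref{coro.Z} as an immediate application of the Skorokhod--Jakubowski theorem (Theorem~\ref{thm.skoro}); the point is that all the substantive work has already been carried out, in Lemma~\ref{lem.Z} and in the tightness hypothesis. Recall that Theorem~\ref{thm.skoro} applies to any topological space $\mathcal{X}$ admitting a countable family of continuous functions $\mathcal{X}\to\R$ that separates the points of $\mathcal{X}$: given a sequence of Borel probability measures on $\mathcal{X}$ that is tight, one may extract a subsequence, construct a single probability space $(\widetilde\Omega,\widetilde\F,\widetilde\Prob)$, and find $\mathcal{X}$-valued random variables with the prescribed laws that converge $\widetilde\Prob$-a.s.

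First I would record that, by Lemma~\ref{lem.Z}, the space $(Z_T,\mathcal{T})$ possesses such a countable separating family of continuous functions: one takes the union of the separating families exhibited there for each of the four component spaces $C^0([0,T];H^3(\dom)')$, $L^2_w(0,T;H^1(\dom))$, $L^2(0,T;L^2(\dom))$, and $C^0([0,T];L^2_w(\dom))$, noting that a function continuous for one of the topologies $\mathcal{T}_i$ is a fortiori continuous for the finer topology $\mathcal{T}$, and that a countable union of countable families is countable. Second, the hypothesis of the corollary is precisely that $\{\Law(\eta_n):n\in\N\}$ is tight on $(Z_T,\mathcal{T})$. Applying Theorem~\ref{thm.skoro} then furnishes the (not relabeled) subsequence $(\eta_k)$, the auxiliary probability space $(\widetilde\Omega,\widetilde\F,\widetilde\Prob)$, the $Z_T$-valued random variables $\widetilde\eta_k$ with $\Law(\widetilde\eta_k)=\Law(\eta_k)$ on $Z_T$, and a $Z_T$-valued limit $\widetilde\eta$ with $\widetilde\eta_k\to\widetilde\eta$ $\widetilde\Prob$-a.s., which is exactly the assertion.

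There is essentially no genuine obstacle at this stage, since the delicate ingredients — constructing separating functions on the non-metrizable pieces $L^2_w(0,T;H^1(\dom))$ and $C^0([0,T];L^2_w(\dom))$, and verifying tightness through the compactness criterion of Lemma~\ref{lem.comp} together with the Aldous condition — are already in place. The only points I would spend a sentence on are measurability-theoretic: one should observe that each $\eta_n$ is a bona fide random variable into $Z_T$ equipped with the $\sigma$-algebra generated by the separating family (which coincides here with the Borel $\sigma$-algebra of $\mathcal{T}$), so that the laws $\Law(\eta_n)$ are well-defined tight Borel probability measures to which Theorem~\ref{thm.skoro} genuinely applies, and that membership of the $\widetilde\prescript{}{}{}$-limit $\widetilde\eta$ in $Z_T$ is itself part of the theorem's conclusion rather than something requiring a separate closedness argument.
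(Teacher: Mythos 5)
Your proposal is correct and follows exactly the paper's route: the corollary is obtained directly by combining Lemma \ref{lem.Z} (countable separating family of continuous functions on $Z_T$) with the Skorokhod--Jakubowski theorem (Theorem \ref{thm.skoro}) applied to the tight sequence of laws. The extra remarks on merging the separating families across the four topologies and on measurability are fine but add nothing beyond what the paper's one-line deduction already presupposes.
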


By Lemma \ref{lem.ut}, the set of measures $\{\Law(u^{(N)}):N\in\N\}$ is tight
on $(Z_T,\mathcal{T})$ and by Lemma \ref{lem.Z}, the space $Z_T\times C^0([0,T];Y_0)$
satisfies the assumption of Theorem \ref{thm.skoro}. 
Therefore, we can apply
Corollary \ref{coro.Z} to deduce the existence of a subsequence of $(u^{(N)})_{N\in\N}$,
which is not relabeled, a probability space 
$(\widetilde\Omega,\widetilde\F,\widetilde\Prob)$, and, on this space,
$Z_T\times C^0([0,T];Y_0)$-valued random variables $(\widetilde u,\widetilde W)$,
$(\widetilde u^{(N)},\widetilde W^{(N)})$ with $N\in\N$ such that
$(\widetilde u^{(N)},\widetilde W^{(N)})$ has the same law as $(u^{(N)},W)$
on $\B(Z_T\times C^0([0,T];Y_0))$ and
$$
  (\widetilde u^{(N)},\widetilde W^{(N)}) \to (\widetilde u,\widetilde W)
	\quad\mbox{in }Z_T\times C^0([0,T];Y_0),\ \widetilde\Prob\mbox{-a.s., as }N\to\infty.
$$
Because of the definition of the space $Z_T$, this convergence means that
$\widetilde\Prob$-a.s.,
\begin{align}
  \widetilde u^{(N)}\to \widetilde u &\quad\mbox{in }C^0([0,T];H^3(\dom)'), \nonumber \\
	\widetilde u^{(N)}\rightharpoonup \widetilde u &\quad\mbox{weakly in }
	L^2(0,T;H^1(\dom)), \nonumber \\
	\widetilde u^{(N)}\to \widetilde u &\quad\mbox{in }L^2(0,T;L^2(\dom)), \label{3.conv} \\
	\widetilde u^{(N)}\to \widetilde u &\quad\mbox{in }C^0([0,T];L_w^2(\dom)), \nonumber \\
  \widetilde W^{(N)}\to \widetilde W &\quad\mbox{in }C^0([0,T];Y_0). \nonumber
\end{align}

Since $u^{(N)}$ is an element of $C^0([0,T];H_N)$ $\Prob$-a.s., $C^0([0,T];H_N)$
is a Borel set of $C^0([0,T];$ $H^3(\dom)')\cap L^2(0,T;L^2(\dom))$, and since
$u^{(N)}$ and $\widetilde u^{(N)}$ have the same laws, we infer that
$$
  \Law(\widetilde u^{(N)})\big(C^0([0,T];H_N)\big) = 1 \quad\mbox{for all }N\ge 1.
$$
Note that, as $\B(Z_T\times C^0([0,T];Y_0))$ is a subset of
$\B(Z_T)\times\B(C^0([0,T];Y_0))$, the function $\widetilde u$ is a $Z_T$-Borel
random variable. Furthermore, in view of estimates \eqref{3.est1}-\eqref{3.est3}
and \eqref{3.estp} and the equivalence of the laws of $\widetilde u^{(N)}$
and $\widetilde u$ on $\B(Z_T)$, we have the uniform bounds
\begin{align}
  \sup_{N\in\N}\widetilde\E\Big(\sup_{t\in(0,T)}\|\widetilde u^{(N)}\|_{L^2(\dom)}^2\Big)
	&\le C_1, \label{3.tilde1} \\
  \sup_{N\in\N}\widetilde\E\bigg(\int_0^T\|\widetilde u^{(N)}\|_{H^1(\dom)}^2 dt\bigg)
	+ \alpha\sup_{N\in\N}\widetilde\E\bigg(\int_0^T\big\|(\widetilde u^{(N)})^2
	\big\|_{H^1(\dom)}^2 dt\bigg)	&\le C_1, \label{3.tilde2} \\
  \sup_{N\in\N}\widetilde\E\bigg(\sup_{t\in(0,T)}
	\|\widetilde u^{(N)}\|_{L^2(\dom)}^p\bigg) 
	&\le C_2, \label{3.tilde3}
\end{align}
where $p\ge 2$ is any number.

We deduce from \eqref{3.tilde2} that there exists a subsequence of 
$(\widetilde u^{(N)})$ (not relabeled) which is weakly converging in
$L^2((0,T)\times\widetilde\Omega;H^1(\dom))$ as $N\to\infty$. Since
$\widetilde u^{(N)}\to\widetilde u$ $\widetilde\Prob$-a.s.\ in $Z_T$,
we conclude that $\widetilde u\in L^2((0,T)\times\widetilde\Omega;H^1(\dom))$, i.e.
\begin{equation}\label{3.uH1}
  \widetilde\E\int_0^T\|\widetilde u(t)\|_{H^1(\dom)}^2 dt < \infty.
\end{equation}
Similarly, the bound \eqref{3.tilde1} allows us to extract a subsequence
which is weakly* convergent in $L^2(\widetilde\Omega;L^\infty(0,T;L^2(\dom)))$ and
\begin{equation}\label{3.uL2}
  \widetilde\E\bigg(\sup_{t\in(0,T)}\|\widetilde u(t)\|_{L^2(\dom)}^2\bigg) < \infty.
\end{equation}

The convergence $\widetilde u^{(N)}\to \widetilde u$ in $L^2(0,T;L^2(\dom))$ 
$\widetilde\Prob$-a.s.\ implies, up to a subsequence, that
$$
  \widetilde u^{(N)}\to\widetilde u \quad\mbox{a.e. in }\dom,
	\widetilde\Prob\mbox{-a.s.}
$$
In particular, we have (componentwise) $(\widetilde u^{(N)})^2\to(\widetilde u)^2$
a.e.\ in $\dom$, $\widetilde\Prob$-a.s. 
On the other hand, by estimate \eqref{3.tilde2}, there exists a subsequence
of $((\widetilde u^{(N)})^2)_{N\in\N}$ weakly converging to some function
$v$ in $L^2(\widetilde\Omega;
L^2(0,T;H^1(\dom)))$. The uniqueness of the limit function then implies
that $v=\widetilde u^2$ and consequently,
$$
  (\widetilde u^{(N)})^2\rightharpoonup (\widetilde u)^2
	\quad\mbox{weakly in }L^2(\widetilde\Omega;L^2(0,T;H^1(\dom))).
$$
It remains to show that the stochastic process $\widetilde u$ is a martingale
solution to \eqref{1.eq}. 
The following lemmas are taken from \cite[Lemma 5.2 and proof]{BGJ13}.

\begin{lemma}\label{lem.W1}
Suppose that the process $(\widetilde W^{(N)}(t))_{t\in[0,T]}$, defined on
$(\widetilde\Omega,\widetilde\F,\widetilde\Prob)$, has the same law as the
$Y$-valued cylindrical Wiener process $W$, defined on $(\Omega,\F,\Prob)$.
Then $\widetilde W^{(N)}$ is also a $Y$-valued cylindrical Wiener process on
$(\widetilde\Omega,\widetilde\F,\widetilde\Prob)$.
\end{lemma}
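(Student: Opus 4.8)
The plan is to exploit the fact that every ingredient in the definition of a cylindrical Wiener process is a property of the \emph{law} of the process on the path space, and hence is automatically inherited by any other process having the same law.

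First I would recall the characterisation we rely on. A $C^0([0,T];Y_0)$-valued process $V$, equipped with the $\widetilde\Prob$-augmentation $\Fil^V$ of its own natural filtration, is a $Y$-valued cylindrical Wiener process in the sense used here --- namely $V=\sum_{k\in\N}\eta_k\beta_k$ with $(\beta_k)$ independent standard real Brownian motions and the series converging in $C^0([0,T];Y_0)$ via the Hilbert--Schmidt embedding $Y\hookrightarrow Y_0$ --- precisely when: (a) $V(0)=0$ a.s.; (b) the coordinate processes $\beta_k$, obtained from the $Y_0$-valued path $V$ by the fixed measurable functionals that pair a path with the orthonormal system $(\eta_k)$, are real standard Brownian motions; (c) the family $(\beta_k)_{k\in\N}$ is jointly independent; and (d) for $s<t$ the increment $V(t)-V(s)$ is independent of $\F^V_s$. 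Each of (a)--(d) can be written as a countable family of conditions on the finite-dimensional marginals of $V$ --- e.g.\ through characteristic functions of the vectors $(\beta_k(t_1),\ldots,\beta_k(t_m))$, their factorisation across the index $k$, and the factorisation that encodes independence of increments from the generated past --- hence as conditions on the Borel probability measure $\Law(V)$ on $C^0([0,T];Y_0)$.

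Next, since $W$ is a cylindrical Wiener process and $\Law(\widetilde W^{(N)})=\Law(W)$ on $C^0([0,T];Y_0)$ by hypothesis, conditions (a)--(d) transfer verbatim to $\widetilde W^{(N)}$: the coordinate processes $\widetilde\beta_k$ extracted from $\widetilde W^{(N)}$ by the same functionals have the same joint finite-dimensional distributions as $(\beta_k)$, so they are independent standard Brownian motions adapted to $\Fil^{\widetilde W^{(N)}}$, and the increments of $\widetilde W^{(N)}$ are independent of the past that $\widetilde W^{(N)}$ generates. Reassembling $\widetilde W^{(N)}=\sum_{k\in\N}\eta_k\widetilde\beta_k$ with convergence in $C^0([0,T];Y_0)$ shows that $\widetilde W^{(N)}$ is a $Y$-valued cylindrical Wiener process on $(\widetilde\Omega,\widetilde\F,\widetilde\Prob)$. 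In the paper's notation, where the driving noise carries $n$ channels $W_j=\sum_k\eta_k\beta_{jk}$, the same argument is applied to the whole family $(\beta_{jk})_{j=1,\ldots,n,\,k\in\N}$ of coordinate processes.

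The work here is bookkeeping rather than conceptual: one must genuinely realise the $\beta_k$ as measurable maps on $C^0([0,T];Y_0)$ (using that the restriction of the pairing with $\eta_k$ to the range of $Y\hookrightarrow Y_0$ extends, up to the usual identifications, to a bounded linear functional on $Y_0$), and one must check that ``being a standard Brownian motion, jointly independent, with increments independent of the generated filtration'' really is encoded by the law --- most cleanly via the L\'evy characterisation or via characteristic functions of finite-dimensional marginals. This is exactly the argument carried out in \cite[Lemma 5.2 and its proof]{BGJ13}, which I would follow, with the minor addition of fixing on $\widetilde\Omega$ the $\widetilde\Prob$-augmented natural filtration of $\widetilde W^{(N)}$ as the filtration with respect to which adaptedness and independence of increments are asserted.
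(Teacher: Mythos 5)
Your proposal is correct and follows essentially the same route as the paper, which proves this lemma simply by invoking \cite[Lemma 5.2 and proof]{BGJ13}: the defining properties of a cylindrical Wiener process (with respect to its own augmented natural filtration) are determined by the law on $C^0([0,T];Y_0)$, hence transfer through $\Law(\widetilde W^{(N)})=\Law(W)$. One small caveat: the $Y$-pairing with $\eta_k$ need not extend to a \emph{bounded} functional on $Y_0$ (the embedding $Y\hookrightarrow Y_0$ is only Hilbert--Schmidt), but this is harmless since Borel-measurable functionals, or a direct check of the Gaussian characteristic functional of increments against $Y_0'$, suffice for the law-transfer argument, exactly as in \cite{BGJ13}.
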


\begin{lemma}\label{lem.W2}
The process $(\widetilde W(t))_{t\in[0,T]}$ is a $Y$-valued cylindrical Wiener
process on $(\widetilde\Omega,\widetilde\F,\widetilde\Prob)$. If $0\le s<t\le T$,
the increments $\widetilde W(t)-\widetilde W(s)$ are independent of the
$\sigma$-algebra generated by $\widetilde u(r)$ and $\widetilde W(r)$ for
$r\in[0,s]$.
\end{lemma}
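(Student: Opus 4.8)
The plan is to derive both assertions by transporting the corresponding facts for the Galerkin pairs $(u^{(N)},W)$ across the almost sure convergence supplied by Corollary~\ref{coro.Z}. Three ingredients are used: (a) almost sure convergence in $Z_T\times C^0([0,T];Y_0)$ entails convergence in law; (b) $(\widetilde u^{(N)},\widetilde W^{(N)})$ and $(u^{(N)},W)$ have the same law on $\B(Z_T\times C^0([0,T];Y_0))$; and (c) since $u^{(N)}$ is a strong solution of \eqref{2.approx1}--\eqref{2.approx2}, it is $\Fil$-adapted, so $u^{(N)}(r)$ and $W(r)$ are $\F_r$-measurable for $r\le s$ while the increment process $r\mapsto W(r)-W(s)$, $r\in[s,T]$, is independent of $\F_s$. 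Combining (a) and (b) along the chosen subsequence, the law of $(\widetilde u,\widetilde W)$ on $Z_T\times C^0([0,T];Y_0)$ is the weak limit of the laws of $(\widetilde u^{(N)},\widetilde W^{(N)})$, equivalently of the laws of $(u^{(N)},W)$.

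First I would settle that $\widetilde W$ is a cylindrical Wiener process: the $C^0([0,T];Y_0)$-marginal of $\Law(u^{(N)},W)$ is $\Law(W)$ for every $N$, so after passing to the limit $\widetilde W$ and $W$ have the same law as $C^0([0,T];Y_0)$-valued random variables, and then the reasoning of Lemma~\ref{lem.W1} applies verbatim to $\widetilde W$. For the independence statement I would introduce the restriction map $G_1:(u,w)\mapsto(u|_{[0,s]},w|_{[0,s]})$ and the increment map $G_2:(u,w)\mapsto(w(\cdot)-w(s))|_{[s,T]}$; both are continuous on $Z_T\times C^0([0,T];Y_0)$ with values in the corresponding function spaces over $[0,s]$ and $[s,T]$, one has $\sigma(G_1(\widetilde u,\widetilde W))=\sigma(\widetilde u(r),\widetilde W(r):r\le s)$, and $G_2(\widetilde u,\widetilde W)$ determines $\widetilde W(t)-\widetilde W(s)$. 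By (c), under $\Law(u^{(N)},W)$ the variable $G_1(u^{(N)},W)$ is $\F_s$-measurable while $G_2(u^{(N)},W)$ is independent of $\F_s$, hence $G_1(u^{(N)},W)$ and $G_2(u^{(N)},W)$ are independent. Because independence of two coordinates is preserved under weak convergence of the joint law once the marginals converge — which holds here since $G_1,G_2$ are continuous, so $G_1(u^{(N)},W)\to G_1(\widetilde u,\widetilde W)$ and $G_2(u^{(N)},W)\to G_2(\widetilde u,\widetilde W)$ in law — the pushforward $\Law(G_1(\widetilde u,\widetilde W),G_2(\widetilde u,\widetilde W))$ is a product measure. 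This is precisely the asserted independence of $\widetilde W(t)-\widetilde W(s)$ from $\sigma(\widetilde u(r),\widetilde W(r):r\le s)$; in particular $\widetilde W$ is a cylindrical Wiener process with respect to the filtration $\widetilde\Fil=(\sigma(\widetilde u(r),\widetilde W(r):r\le t))_{t\in[0,T]}$ (suitably augmented to satisfy the usual conditions).

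The algebra in (c) is elementary; the delicate points are the topological and measure-theoretic bookkeeping at the ends of the limiting procedure. One must check that $G_1$ and $G_2$ are continuous for the non-metrizable maximum topology on $Z_T$ — which follows from the continuous embeddings of $Z_T$ into $C^0([0,T];H^3(\dom)')$ and into $C^0([0,T];L^2_w(\dom))$, on which restriction and time evaluation are continuous — and that $\sigma(G_1(\widetilde u,\widetilde W))$ indeed coincides with $\sigma(\widetilde u(r),\widetilde W(r):r\le s)$, using the countable separating families of Lemma~\ref{lem.Z}. Moreover, as throughout the construction, $\widetilde W$ takes values in $Y_0$ rather than $Y$ and is only \emph{cylindrical} on $Y$, so every statement about its law and covariance must be read through the Hilbert--Schmidt embedding $Y\hookrightarrow Y_0$; this is precisely why one invokes Lemma~\ref{lem.W1} rather than re-deriving the Wiener property. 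An equivalent route would replace the \emph{independence-under-weak-limits} step by a direct factorisation of the conditional characteristic functional, showing $\widetilde\E[\gamma\,e^{i(\widetilde W(t)-\widetilde W(s),\psi)}]=\widetilde\E[\gamma]\,\E[e^{i(W(t)-W(s),\psi)}]$ for bounded continuous $\gamma$ depending only on the path of $(\widetilde u,\widetilde W)$ up to time $s$, or by L\'evy's martingale characterisation applied to the real components of $\widetilde W$; in all cases the proof reduces to equality of laws together with the almost sure convergence~\eqref{3.conv}.
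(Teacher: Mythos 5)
The paper does not actually prove this lemma: it is quoted from \cite[Lemma 5.2 and proof]{BGJ13}, and your argument is essentially a reconstruction of that standard proof — transfer of the independence structure of $(u^{(N)},W)$ (adaptedness of the strong Galerkin solution plus independent increments of $W$) through the equality of laws from Corollary \ref{coro.Z}, followed by a passage to the limit using the $\widetilde\Prob$-a.s.\ convergence \eqref{3.conv}. So the approach is correct and matches the cited one.

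One caution on your first route: the statement ``independence of two coordinates is preserved under weak convergence of the joint law'' is cleanest on Polish spaces, whereas $Z_T$ carries a non-metrizable topology and its Borel structure is handled via the separating functionals of Theorem \ref{thm.skoro}; rather than pushing weak convergence of product laws through $G_1,G_2$, it is simpler (and is what \cite{BGJ13} does) to use your ``equivalent route'': test with bounded continuous cylindrical functionals of finitely many evaluations $(\widetilde u^{(N)}(r),h)_{L^2(\dom)}$ and $\widetilde W^{(N)}(r)$, $r\le s$, together with $e^{i(\widetilde W^{(N)}(t)-\widetilde W^{(N)}(s),\psi)}$, factorize for each $N$ by equality of laws, and pass to the limit by a.s.\ convergence and dominated convergence; such functionals generate $\sigma(\widetilde u(r),\widetilde W(r):r\le s)$, which settles the measure-theoretic bookkeeping you flagged.
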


We denote by $\widetilde\Fil$ the filtration generated by $(\widetilde u,\widetilde W)$
and by $\widetilde\Fil^{(N)}$ the filtration generated by 
$(\widetilde u^{(N)},\widetilde W^{(N)})$. Lemma \ref{lem.W1} implies that
$\widetilde u$ is progressively measurable with respect to $\widetilde\Fil$, and
Lemma \ref{lem.W2} shows that $\widetilde u^{(N)}$ is progressively measurable
with respect to $\widetilde\Fil^{(N)}$. 

The following lemma plays a significant role in establishing the existence of a
martingale solution to \eqref{1.eq}. 

\begin{lemma}\label{lem.E}
It holds for all $s$, $t\in[0,T]$ with $s\le t$ and all $\phi_1\in L^2(\dom)$
and $\phi_2\in H^3(\dom)$ satisfying $\na\phi_2\cdot\nu=0$ on $\pa\dom$ that
\begin{align}
  \lim_{N\to\infty}\widetilde\E\int_0^T\big(\widetilde u^{(N)}(t)-\widetilde u(t),
	\phi_1\big)_{L^2(\dom)}^2dt &= 0, \label{3.E1} \\
	\lim_{N\to\infty}\widetilde\E\big(\widetilde u^{(N)}(0)-\widetilde u(0),
	\phi_1\big)_{L^2(\dom)}^2 &= 0, \label{3.E2} \\
	\lim_{N\to\infty}\widetilde\E\int_0^T\bigg|\sum_{j=1}^n\int_0^t\Big\langle
	A_{ij}(\widetilde u^{(N)}(s))\na\widetilde u_j^{(N)}(s)
	- A_{ij}(\widetilde u(s))\na\widetilde u_j(s),\na\phi_2\Big\rangle
	ds\bigg|dt &= 0, \label{3.E3} \\
	\lim_{N\to\infty}\widetilde\E\int_0^T\bigg|\sum_{j=1}^n\int_0^t\Big(
	\sigma_{ij}(\widetilde u^{(N)}(s))d\widetilde W_j^{(N)}(s)-\sigma_{ij}(\widetilde u(s))
	d\widetilde W_j(s),\phi_1\Big)_{L^2(\dom)}\bigg|^2 dt &= 0. \label{3.E4}
\end{align}
\end{lemma}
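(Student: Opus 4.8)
The plan is to prove the four limits in Lemma \ref{lem.E} separately, exploiting the a.s.\ convergences in \eqref{3.conv} together with the uniform moment bounds \eqref{3.tilde1}--\eqref{3.tilde3}; the common mechanism is to show a.s.\ convergence of the integrands and then upgrade to convergence in expectation by a uniform integrability (Vitali) argument. For \eqref{3.E1} and \eqref{3.E2}, note that $\widetilde u^{(N)}\to\widetilde u$ in $L^2(0,T;L^2(\dom))$ and in $C^0([0,T];L^2_w(\dom))$ $\widetilde\Prob$-a.s., so $(\widetilde u^{(N)}(t)-\widetilde u(t),\phi_1)_{L^2(\dom)}^2\to 0$ in $L^1(0,T)$ a.s.\ (for \eqref{3.E1}) and pointwise at $t=0$ a.s.\ (for \eqref{3.E2}); the uniform bound \eqref{3.tilde3} with $p>2$ gives $L^{p/2}(\widetilde\Omega)$-boundedness of $\sup_t(\widetilde u^{(N)}(t),\phi_1)^2$, hence uniform integrability, and Vitali's theorem yields the claim.

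For \eqref{3.E3}, the key step is the strong convergence $\widetilde u^{(N)}\to\widetilde u$ in $L^2(0,T;L^2(\dom))$, which after passing to a further subsequence gives $\widetilde u^{(N)}\to\widetilde u$ a.e.\ in $\dom\times(0,T)$ $\widetilde\Prob$-a.s.; since $A_{ij}$ is a polynomial of degree $\le 2$ in its arguments, $A_{ij}(\widetilde u^{(N)})\to A_{ij}(\widetilde u)$ a.e., and combined with the $L^2$-type bounds $A_{ij}(\widetilde u^{(N)})$ is bounded in $L^3((0,T)\times\widetilde\Omega;L^2(\dom))$ by Lemma \ref{lem.3} and \eqref{3.tilde1}, so $A_{ij}(\widetilde u^{(N)})\to A_{ij}(\widetilde u)$ strongly in $L^2((0,T)\times\widetilde\Omega\times\dom)$. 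Pairing this with the weak convergence $\na\widetilde u_j^{(N)}\rightharpoonup\na\widetilde u_j$ in $L^2((0,T)\times\widetilde\Omega\times\dom)$ (from \eqref{3.tilde2}) handles the difference $A_{ij}(\widetilde u^{(N)})\na\widetilde u_j^{(N)} - A_{ij}(\widetilde u)\na\widetilde u_j$; since $\na\phi_2\in L^\infty(\dom)$ and the time integral over $[0,t]\subset[0,T]$ is uniformly bounded, one concludes the integrand converges to $0$ $\widetilde\Prob$-a.s.\ in $L^1(0,T)$, and a Vitali argument using the uniform bounds promotes this to \eqref{3.E3}. The identification of the weak limit of $A_{ij}(\widetilde u^{(N)})\na\widetilde u_j^{(N)}$ (strong times weak $\Rightarrow$ weak) is the technical heart here.

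For the stochastic term \eqref{3.E4}, I would split
$$
\sigma_{ij}(\widetilde u^{(N)})d\widetilde W_j^{(N)} - \sigma_{ij}(\widetilde u)d\widetilde W_j
= \big(\sigma_{ij}(\widetilde u^{(N)}) - \sigma_{ij}(\widetilde u)\big)d\widetilde W_j^{(N)}
+ \sigma_{ij}(\widetilde u)\,d\big(\widetilde W_j^{(N)} - \widetilde W_j\big).
$$
The first piece is controlled by the It\^o isometry (Proposition \ref{prop.iso}) and the Lipschitz bound \eqref{1.sigma}: its $L^2(\widetilde\Omega)$ second moment is dominated by $\widetilde\E\int_0^T\|\widetilde u^{(N)} - \widetilde u\|_{L^2(\dom)}^2\,ds$, which vanishes by \eqref{3.E1}-type reasoning (indeed directly by the strong $L^2(0,T;L^2(\dom))$ convergence and uniform integrability). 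The second piece requires a stochastic-convergence lemma: since $\widetilde W^{(N)}\to\widetilde W$ in $C^0([0,T];Y_0)$ $\widetilde\Prob$-a.s., $\widetilde u^{(N)}\to\widetilde u$, and both $(\widetilde u^{(N)},\widetilde W^{(N)})$ and $(\widetilde u,\widetilde W)$ carry compatible filtrations (Lemmas \ref{lem.W1}, \ref{lem.W2}), one invokes the standard result on convergence of stochastic integrals under Skorokhod-type convergence (e.g.\ the Bensoussan--Brze\'zniak/Debussche lemma) to get $\int_0^\cdot\sigma_{ij}(\widetilde u(s))\,d\widetilde W_j^{(N)}(s)\to\int_0^\cdot\sigma_{ij}(\widetilde u(s))\,d\widetilde W_j(s)$ in probability in $L^2(0,T;L^2(\dom))$; the growth bound in \eqref{1.sigma} and \eqref{3.tilde3} furnish the uniform integrability needed to pass from convergence in probability to \eqref{3.E4}. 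I expect this last point --- rigorously passing to the limit in the stochastic integral with a \emph{changing} integrator $\widetilde W_j^{(N)}$ --- to be the main obstacle, since it is the one step not reducible to deterministic compactness plus Vitali.
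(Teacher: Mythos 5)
Your treatment of \eqref{3.E1} and \eqref{3.E2} matches the paper (a.s.\ convergence from $C^0([0,T];L^2_w(\dom))$ plus a Vitali argument with the higher moments), and your plan for \eqref{3.E4} is in the right spirit. The genuine gap is in \eqref{3.E3}. You pair strong convergence of $A_{ij}(\widetilde u^{(N)})$ with weak convergence of $\na\widetilde u^{(N)}_j$ \emph{in the product space} $(0,T)\times\widetilde\Omega\times\dom$, and then assert that ``the integrand converges to $0$ $\widetilde\Prob$-a.s.\ in $L^1(0,T)$''. That does not follow: strong~$\times$~weak in the product space only gives weak $L^1$ convergence of $A_{ij}(\widetilde u^{(N)})\na\widetilde u_j^{(N)}$, i.e.\ convergence of $\widetilde\E\int_0^t\langle\cdot,\na\phi_2\rangle ds$ \emph{without} the absolute value; but \eqref{3.E3} has the modulus inside $\widetilde\E\int_0^T dt$, and weak convergence gives no control of $\widetilde\E|\cdot|$ and no a.s.\ (or in-probability) convergence of the inner integral, so your Vitali step has nothing to start from. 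The paper avoids exactly this by arguing pathwise: it rewrites $\widetilde u_j\widetilde u_k\na\widetilde u_k=\tfrac12\widetilde u_j\na(\widetilde u_k^2)$ and, for the term $(\widetilde u_k)^2\na\widetilde u_j$, integrates by parts onto $\phi_2$ --- this is precisely where the hypotheses $\phi_2\in H^3(\dom)$ and $\na\phi_2\cdot\nu=0$ are used, and your proposal never uses the boundary condition, which is a symptom of the missing step. Your route could in principle be repaired without integration by parts (split $A_N\na u^{(N)}-A\na u=(A_N-A)\na u^{(N)}+A(\na u^{(N)}-\na u)$, show the inner integral tends to $0$ in probability for each $t$, then Vitali), but none of this is in your text. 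A second, smaller defect in the same step: the claimed strong convergence $A_{ij}(\widetilde u^{(N)})\to A_{ij}(\widetilde u)$ in $L^2((0,T)\times\widetilde\Omega\times\dom)$ is not justified by Lemma \ref{lem.3} and \eqref{3.tilde1} alone; the bound of Lemma \ref{lem.3} is a mixed-norm $L^3_{t,\omega}L^2_x$ bound, and together with a.e.\ convergence it does not give $L^2$ convergence on the product space --- you would need, e.g., interpolation with the $L^2_{t,\omega}L^6_x$ bound on $(\widetilde u^{(N)})^2$ coming from \eqref{3.tilde2} and Sobolev embedding to obtain an $L^q$ bound, $q>2$, on the product space before invoking Vitali.

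Two further remarks. For \eqref{3.E4}, your decomposition into $(\sigma_{ij}(\widetilde u^{(N)})-\sigma_{ij}(\widetilde u))d\widetilde W_j^{(N)}$ plus $\sigma_{ij}(\widetilde u)\,d(\widetilde W_j^{(N)}-\widetilde W_j)$ raises an adaptedness issue: $\sigma_{ij}(\widetilde u)$ is adapted to the filtration of $(\widetilde u,\widetilde W)$, not to $\widetilde\Fil^{(N)}$, so the two pieces are not individually well-defined It\^o integrals without enlarging filtrations; it is cleaner to apply the changing-integrator convergence lemma (or, as the paper does, reduce to a same-integrator difference via the a.s.\ convergence $\widetilde W^{(N)}\to\widetilde W$ in $C^0([0,T];Y_0)$) to the whole integral at once, and then conclude by the It\^o isometry, the Lipschitz bound \eqref{1.sigma}, and Vitali as you indicate. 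Finally, note that the paper's proof of \eqref{3.E3} also ends with a uniform second-moment bound on the time integral (from \eqref{3.tilde2}--\eqref{3.tilde3}) followed by Vitali, so once you replace your product-space pairing by the pathwise argument (or the in-probability repair sketched above), the rest of your outline aligns with the paper.
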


\begin{proof}
Let $\phi_1\in L^2(\dom)$. We know that $\widetilde u^{(N)}\to \widetilde u$ in
$Z_T$ $\widetilde\Prob$-a.s. In particular, $\widetilde u^{(N)}\to \widetilde u$
in $C^0([0,T];L_w^2(\dom))$ $\widetilde\Prob$-a.s., which means that
for any $t\in[0,T]$,
$$
  \lim_{N\to\infty}(\widetilde u^{(N)}(t),\phi_1)_{L^2(\dom)}
	= (\widetilde u(t),\phi_1)_{L^2(\dom)} \quad\widetilde\Prob\mbox{-a.s.}
$$
Estimate \eqref{3.tilde1} provides a uniform bound for 
$(\widetilde u^{(N)}(t),\phi_1)_{L^2(\dom)}^2$ such that we can apply the dominated
convergence theorem to conclude that
\begin{equation}\label{3.aux}
  \lim_{N\to\infty}\int_0^T\big(\widetilde u^{(N)}(t)-\widetilde u(t),
	\phi_1\big)_{L^2(\dom)}^2 dt = 0 \quad\widetilde\Prob\mbox{-a.s.}
\end{equation}
We have for any $r>1$, by \eqref{3.tilde3},
$$
  \widetilde\E\bigg(\bigg|\int_0^T\|\widetilde u^{(N)}(t)-\widetilde u(t)\|_{L^2(\dom)}^2
	dt\bigg|^r\bigg)
	\le C\widetilde\E\int_0^T\big(\|\widetilde u^{(N)}(t)\|_{L^2(\dom)}^{2r}
	+ \|\widetilde u(t)\|_{L^2(\dom)}^{2r}\big)dt	\le C.
$$
This bound provides the equi-integrability of 
$\int_0^T\big(\widetilde u^{(N)}(t)-\widetilde u(t),\phi_1\big)_{L^2(\dom)}^2 dt$.
Taking into account the convergence \eqref{3.aux}, 
Vitali's convergence theorem (see the appendix) then shows that \eqref{3.E1} holds. 

Convergence \eqref{3.E2} follows in a similar way. Indeed, since
$\widetilde u^{(N)}\to\widetilde u$ in $C^0([0,T];L_w^2(\dom))$ 
$\widetilde\Prob$-a.s.\ and $\widetilde u$ is continuous at $t=0$, we infer that
for any $\phi_1\in L^2(\dom)$,
$$
  \lim_{N\to\infty}(\widetilde u^{(N)}(0),\phi_1)_{L^2(\dom)}
	= (\widetilde u(0),\phi_1)_{L^2(\dom)} \quad\widetilde\Prob\mbox{-a.s.}
$$
Then convergence \eqref{3.E2} follows from \eqref{3.tilde1} and Vitali's
convergence theorem. 

Next, we establish convergence \eqref{3.E3} through several steps.
Due to the structure of $A_{ij}(\widetilde u^{(N)})$, we need to show the
following three convergences:
\begin{align}
	\lim_{N\to\infty}\int_0^t\big\langle\na\widetilde u_j^{(N)}(s)-\na\widetilde u_j(s),
	\na\phi\big\rangle ds &= 0, \label{3.N1} \\
		\lim_{N\to\infty}\int_0^t\Big\langle \widetilde u_j^{(N)}(s)\widetilde u_k^{(N)}(s)
	\na\widetilde u_k^{(N)}(s) - \widetilde u_j(s)\widetilde u_k(s)\na\widetilde u_k(s),
	\na\phi\Big\rangle ds &= 0, \label{3.N2} \\
	\lim_{N\to\infty}\int_0^t\Big\langle(\widetilde u_k^{(N)}(s))^2
	\na\widetilde u_j^{(N)}(s) - (\widetilde u_k(s))^2\na\widetilde u(s),\na\phi
	\Big\rangle ds &= 0, \label{3.N3}
\end{align}
for $j\neq k$ and suitable test functions $\phi$. 
We deduce from convergence \eqref{3.conv} that \eqref{3.N1}
follows for all $\phi\in H^1(\dom)$. The second convergence \eqref{3.N2} is proved
as follows:
\begin{align*}
  \bigg|&\int_0^t\Big\langle\widetilde u_j^{(N)}(s)\widetilde u_k^{(N)}(s)
	\na\widetilde u_k^{(N)}(s) - \widetilde u_j(s)\widetilde u_k(s)\na\widetilde u_k(s),
	\na\phi\Big\rangle ds\bigg| \\
	&= \frac12\bigg|\int_0^t\Big\langle \widetilde u_j^{(N)}(s)
	\na\big(\widetilde u_k^{(N)}(s)\big)^2
	- \widetilde u_j(s)\na\big(\widetilde u_k(s)\big)^2,\na\phi\Big\rangle ds\bigg| \\
	&= \frac12\bigg|\int_0^t\Big\langle\big(\widetilde u_j^{(N)}(s)-\widetilde u_j(s)\big)
	\na\big(\widetilde u_k^{(N)}(s)\big)^2 
	+ \widetilde u_j(s)\na\big\{\big(\widetilde u_k^{(N)}(s)\big)^2
	- \big(\widetilde u_k(s)\big)^2\big\},\na\phi\Big\rangle ds\bigg| \\
	&\le \frac12\int_0^t\|\widetilde u_j^{(N)}(s)-\widetilde u_j(s)\|_{L^2(\dom)}
	\|\na(\widetilde u_k^{(N)})^2\|_{L^2(\dom)}\|\na\phi\|_{L^\infty(\dom)} ds \\
	&\phantom{xx}{}+ \frac12\bigg|\int_0^t\Big(\widetilde u_j(s)
	\na\big\{\big(\widetilde u_k^{(N)}(s)\big)^2
	- \big(\widetilde u_k(s)\big)^2\big\},\na\phi\Big)_{L^2(\dom)} 
	ds\bigg| \\
	&=: I^{(N)}_1 + I^{(N)}_2.
\end{align*}
Let $\phi\in H^3(\dom)$. Then the embedding $H^3(\dom)\hookrightarrow W^{1,\infty}(\dom)$
is continuous for $d\le 3$ and, using the Cauchy-Schwarz inequality,
$$
  I^{(N)}_1 \le \frac12\|\phi\|_{H^3(\dom)}\|\widetilde u_j^{(N)}
	-\widetilde u_j\|_{L^2(0,T;L^2(\dom))}
	\|\na(\widetilde u_k^{(N)})^2\|_{L^2(0,T;L^2(\dom))}.
$$
Since $\widetilde u^{(N)}\to \widetilde u$ in $L^2(0,T;L^2(\dom))$ $\widetilde\Prob$-a.s.
and $\na(\widetilde u^{(N)})^2$ is uniformly bounded in $L^2(0,T;$ $L^2(\dom))$,
it follows that $I^{(N)}_1\to 0$ as $N\to\infty$. For the second integral, we
observe that $\widetilde u_j\na\phi\in L^2(0,T;L^2(\dom))$ (using \eqref{3.tilde2})
and $(\widetilde u^{(N)})^2\rightharpoonup(\widetilde u)^2$ weakly in
$L^2(0,T;H^1(\dom))$ (by \eqref{3.conv}). This implies that $I^{(N)}_2\to 0$
as $N\to\infty$, and we have proved \eqref{3.N2}.

We turn to the proof of \eqref{3.N3}. Let $\phi\in H^3(\dom)$ be such that
$\na\phi\cdot\nu=0$ on $\pa\dom$. An integration by parts leads to
\begin{align*}
  \int_0^t&\Big\langle(\widetilde u_k^{(N)}(s))^2
	\na\widetilde u_j^{(N)}(s) - (\widetilde u_k(s))^2\na\widetilde u_j(s),\na\phi
	\Big\rangle ds \\
	&= \int_0^t\int_\dom\Big((\widetilde u_k^{(N)}(s))^2\na\widetilde u_j^{(N)}(s)
	- (\widetilde u_k(s))^2\na\widetilde u_j(s)\Big)\cdot\na\phi dxds \\
	&= -\int_0^t\int_\dom\Big((\widetilde u_k^{(N)}(s))^2\widetilde u_j^{(N)}(s)
	- (\widetilde u_k(s))^2\widetilde u_j(s)\Big)\Delta\phi dxds \\
	&\phantom{xx}{}- \int_0^t\int_\dom\Big(\widetilde u_j^{(N)}(s)\na(u_k^{(N)}(s))^2
	- \widetilde u_j(s)\na(\widetilde u_k(s))^2\Big)\cdot\na\phi dxds \\
	&=: I^{(N)}_3 + I_4^{(N)}.
\end{align*}
The estimates for $I^{(N)}_1+I_2^{(N)}$ show that $I^{(N)}_4\to 0$
as $N\to\infty$. We estimate $I_3^{(N)}$ as follows, using the continuous
embeddings $H^3(\dom)\hookrightarrow W^{2,4}(\dom)$ and
$H^1(\dom)\hookrightarrow L^4(\dom)$ (for $d\le 3$):
\begin{align*}
  I_3^{(N)} &= -\int_0^t\int_\dom\big(\widetilde u_j^{(N)}(s)-\widetilde u_j(s)\big)
	\big(\widetilde u_k^{(N)}(s)\big)^2\Delta\phi dxds \\
	&\phantom{xx}{}+ \int_0^t\int_\dom
	\big((\widetilde u_k^{(N)}(s))^2-(\widetilde u_k(s))^2\big)\widetilde u_j(s)
	\Delta\phi dxds \\
	&\le \int_0^t\big\|\widetilde u_j^{(N)}(s)-\widetilde u_j(s)\big\|_{L^2(\dom)}
  \big\|(\widetilde u_k^{(N)}(s))^2\big\|_{L^4(\dom)}\|\Delta\phi\|_{L^4(\dom)}ds  \\
	&\phantom{xx}+ \int_0^t\int_\dom
	\big((\widetilde u_k^{(N)}(s))^2-(\widetilde u_k(s))^2\big)\widetilde u_j(s)
	\Delta\phi dxds \\
  &\le \big\|\widetilde u_j^{(N)}-\widetilde u_j\big\|_{L^2(0,T;L^2(\dom))}
	\big\|(\widetilde u_k^{(N)})^2\big\|_{L^2(0,T;H^1(\dom))}\|\phi\|_{H^3(\dom)} \\
	&\phantom{xx}+ \int_0^t\int_\dom
	\big((\widetilde u_k^{(N)}(s))^2-(\widetilde u_k(s))^2\big)\widetilde u_j(s)
	\Delta\phi dxds.
\end{align*}
The convergences \eqref{3.conv} and $\widetilde u_j\Delta\phi\in L^2(0,T;L^2(\dom))$
$\widetilde\Prob$-a.s.\ imply that $I_3^{(N)}\to 0$ as $N\to\infty$. 

Convergences \eqref{3.N1}-\eqref{3.N3} imply that $\widetilde\Prob$-a.s.
\begin{equation}\label{3.A1}
  \lim_{N\to\infty}\int_0^t\big(A_{ij}(\widetilde u^{(N)}(s))\na\widetilde u_j^{(N)}(s),
	\na \phi_2\big)_{L^2(\dom)} ds
	= \int_0^t\big(A_{ij}(\widetilde u(s))\na \widetilde u_j(s),
	\na\phi_2\big)_{L^2(\dom)} ds
\end{equation}
for all $\phi_2\in H^3(\dom)$ with $\na\phi_2\cdot\nu=0$ on $\pa\dom$. Furthermore,
employing the structure of $A_{ij}(u^{(N)})$, the continuous embedding
$H^3(\dom)\hookrightarrow W^{1,\infty}(\dom)$ (again for $d\le 3$ only), and 
estimates \eqref{3.tilde2}-\eqref{3.tilde3}, we find that
\begin{align*}
  \widetilde\E&\bigg(\bigg|\int_0^t\big(A_{ij}(\widetilde u^{(N)}(s))
	\na\widetilde u_j^{(N)}(s),\na\phi_2\big)_{L^2(\dom)}ds\bigg|^2\bigg) \\
  &\le \|\na\phi_2\|^2_{L^\infty(\dom)}\widetilde\E\bigg(\bigg|\int_0^t
	\big\|A_{ij}(u^{(N)}(s))\na\widetilde u_j^{(N)}(s)\big\|_{L^1(\dom)} 
	ds\bigg|^2\bigg) \\
	&\le C\|\phi_2\|^2_{H^3(\dom)}\widetilde\E\bigg(\bigg|\int_0^t
	\big(1+\|\widetilde u^{(N)}(s)^2\|_{L^2(\dom)}\big)
	\|\na\widetilde u^{(N)}(s)\|_{L^2(\dom)}ds\bigg|^2\bigg) \\
	&\le C\|\phi_2\|^2_{H^3(\dom)}\Big(T^{1/2}\big(
	\widetilde\E\|\widetilde u^{(N)}\|_{L^2(0,T;H^1(\dom))}^2\big)^{1/2} \\
	&\phantom{xx}{}
	+ T^{1/6}\big(\widetilde\E\|(\widetilde u^{(N)})^2\|_{L^3(0,T;L^2(\dom))}^3\big)^{1/3}
	\big(\widetilde\E\|\widetilde u^{(N)}\|_{L^2(0,T;H^1(\dom))}^2\big)^{1/2}\Big)
	\le C.
\end{align*}
This bound and the $\widetilde\Prob$-a.s.\ convergence \eqref{3.A1} allow us to
apply the Vitali convergence theorem to infer that
\eqref{3.E3} holds.

It remains to prove convergence \eqref{3.E4}. Since $\widetilde{W}^{(N)} 
\to \widetilde{W}$ in $C^0([0,T]; Y_0)$, it is sufficient to show that 
$\sigma_{ij}(\widetilde{u}^{(N)}) \to \sigma_{ij}(\widetilde{u})$ in 
$L^2(0,T; \T_2(Y; L^2(\dom)))$ $\Prob$-a.s. We estimate for $\phi\in L^2(\dom)$:
\begin{align*}
  \int_0^t&\big\|\big(\sigma_{ij}(\widetilde u^{(N)}(s))-\sigma_{ij}(\widetilde u(s)),
	\phi\big)_{L^2(\dom)}\big\|_{\T_2(Y;\R)}^2 ds \\
	&\le \int_0^t\big\|\sigma_{ij}(\widetilde u^{(N)}(s))-\sigma_{ij}(\widetilde u(s))
	\big\|_{\T_2(Y;L^2(\dom))}^2\|\phi\|_{L^2(\dom)}^2 ds \\
	&\le C_\sigma\|\widetilde u^{(N)}-\widetilde u\|_{L^2(0,T;L^2(\dom))}^2
	\|\phi\|_{L^2(\dom)}^2.
\end{align*}
Since $\widetilde u^{(N)}\to\widetilde u$ in $L^2(0,T;L^2(\dom))$ 
$\widetilde\Prob$-a.s., by \eqref{3.conv}, we infer that for $t\in[0,T]$,
$\omega\in\widetilde\Omega$, and $\phi\in L^2(\dom)$,
\begin{equation}\label{3.aux5}
  \lim_{N\to\infty} 
	\int_0^t\big\|\big(\sigma_{ij}(\widetilde u^{(N)}(s))-\sigma_{ij}(\widetilde u(s)),
	\phi\big)_{L^2(\dom)}\big\|_{\T_2(Y;\R)}^2 ds = 0.
\end{equation}
We conclude from \eqref{3.tilde3} and \eqref{3.uH1} that
\begin{align*}
  \widetilde\E&\bigg|\int_0^t\big\|(\sigma_{ij}(\widetilde u^{(N)}(s))
	-\sigma_{ij}(\widetilde u(s)),\phi\big)_{L^2(\dom)}\big\|_{\T_2(Y;\R)}^2 ds
	\bigg|^2 \\
	&\le C\widetilde\E\bigg(\|\phi\|_{L^2(\dom)}^4\int_0^t\big(
	\|\sigma_{ij}(\widetilde u^{(N)}(s))\|_{\T_2(Y;L^2(\dom))}^4 
	+ \|\sigma_{ij}(\widetilde u(s))\|_{\T_2(Y;L^2(\dom))}^4\big)ds\bigg) \\
	&\le C\bigg(1+\widetilde\E\Big(\sup_{t\in(0,T)}\|\widetilde u^{(N)}(t)\|_{L^2(\dom)}^4
	+ \sup_{t\in(0,T)}\|\widetilde u(t)\|_{L^2(\dom)}^4\Big)\bigg) \le C.
\end{align*}
With this bound, convergence \eqref{3.aux5}, and the Vitali convergence theorem
we obtain for all $\phi\in L^2(\dom)$,
$$
  \lim_{N\to\infty}\widetilde\E\int_0^t\big\|\big(\sigma_{ij}(\widetilde u^{(N)}(s))
	- \sigma_{ij}(\widetilde u(s)),\phi\big)_{L^2(\dom)}\big\|_{\T_2(Y;\R)}^2 ds = 0.
$$
Hence, by the It\^{o} isometry (Proposition \ref{prop.iso})
for $t\in[0,T]$ and $\phi\in L^2(\dom)$, 
\begin{equation}\label{3.aux6}
  \lim_{N\to\infty}\widetilde\E\bigg|\bigg(\int_0^t
	\big(\sigma_{ij}(\widetilde u^{(N)}(s))-\sigma_{ij}(\widetilde u(s))\big)
	d\widetilde W_j(s),\phi\bigg)_{L^2(\dom)}\bigg|^2 = 0.
\end{equation}
We use the It\^{o} isometry again and estimates \eqref{3.tilde1} and \eqref{3.uL2}
for $N\in\N$, $t\in[0,T]$, and $\phi\in L^2(\dom)$ to infer that
\begin{align*}
  \widetilde\E&\bigg|\bigg(\int_0^t\big(\sigma_{ij}(\widetilde u^{(N)}(s))
	- \sigma_{ij}(\widetilde u(s))\big)d\widetilde W_j(s),\phi\bigg)_{L^2(\dom)}\bigg|^2 \\
	&= \widetilde\E\bigg(\int_0^t\big\|\big(\sigma_{ij}(\widetilde u^{(N)}(s))
	- \sigma_{ij}(\widetilde u(s)),\phi\big)_{L^2(\dom)}\big\|_{\T_2(Y;\R)}^2 ds
	\bigg) \\
	&\le \widetilde\E\bigg(\|\phi\|_{L^2(\dom)}^2\int_0^t\big\|
	\sigma_{ij}(\widetilde u^{(N)}(s))-\sigma_{ij}(\widetilde u(s))
	\big\|_{\T_2(Y;L^2(\dom))}^2 ds\bigg) \\
	&\le C\widetilde\E\bigg(t\sup_{t\in(0,T)}\|\widetilde u^{(N)}(t)\|_{L^2(\dom)}^2
	+ t\sup_{t\in(0,T)}\|\widetilde u(s)\|_{L^2(\dom)}^2\bigg)
	\le C.
\end{align*}
This bound and convergence \eqref{3.aux6} allow us to apply the dominated convergence
theorem to conclude that for all $\phi\in L^2(\dom)$,
$$
  \lim_{N\to\infty}\widetilde\E\int_0^T\bigg|\bigg(\int_0^t
	\big(\sigma_{ij}(\widetilde u^{(N)}(s))-\sigma_{ij}(\widetilde u(s))\big)
	d\widetilde W_j(s),\phi\bigg)_{L^2(\dom)}\bigg|^2 dt = 0.
$$
This shows \eqref{3.E4} and finishes the proof.
\end{proof}

Let us define
\begin{align*}
  \Lambda^{(N)}_i(\widetilde u^{(N)},\widetilde W^{(N)},\phi)(t)
	&:= (\Pi_N(\widetilde u_i(0)),\phi)_{L^2(\dom)} \\
	&\phantom{xx}{}
	+ \sum_{j=1}^n\int_0^t\Big\langle\Pi_N\diver\big(A_{ij}(\widetilde u^{(N)}(s))
	\na\widetilde u^{(N)}_j(s)\big),\phi\Big\rangle ds \\
	&\phantom{xx}{}+ \bigg(\sum_{j=1}^n\int_0^t\Pi_N\sigma_{ij}(\widetilde u^{(N)}(s))
	d\widetilde W_j^{(N)},\phi\bigg)_{L^2(\dom)}, \\
	\Lambda_i(\widetilde u,\widetilde W,\phi)(t)
	&:= (\widetilde u_i(0),\phi)_{L^2(\dom)}
	+ \sum_{j=1}^n\int_0^t\big\langle\diver\big(A_{ij}(\widetilde u(s))
	\na\widetilde u_j(s)\big),\phi\big\rangle ds \\
	&\phantom{xx}{}+ \bigg(\sum_{j=1}^n\int_0^t\sigma_{ij}(\widetilde u(s))
	d\widetilde W_j(s),\phi\bigg)_{L^2(\dom)}, 
\end{align*}
for $t\in[0,T]$ and $i=1,\ldots,n$. The following corollary is essentially a consequence
of Lemma \ref{lem.E}. 

\begin{corollary}\label{coro.E}
It holds for any $\phi_1\in L^2(\dom)$ and any $\phi_2\in H^3(\dom)$ 
satisfying $\na\phi_2\cdot\nu=0$ on $\pa\dom$ that
\begin{align*}
  \lim_{N\to\infty}\big\|(\widetilde u^{(N)},\phi_1)_{L^2(\dom)}
	- (\widetilde u,\phi_1)_{L^2(\dom)}\big\|_{L^2(\widetilde\Omega\times(0,T))} &= 0, \\
  \lim_{N\to\infty}\big\|\Lambda_i^{(N)}(\widetilde u^{(N)},\widetilde W^{(N)},\phi_2)
	- \Lambda_i(\widetilde u,\widetilde W,\phi_2)\big\|_{L^1(\widetilde\Omega\times(0,T))}
	&= 0.
\end{align*}
\end{corollary}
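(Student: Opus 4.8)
The plan is to deduce the corollary from Lemma \ref{lem.E}, the only extra work being the systematic transfer of the Galerkin projection $\Pi_N$ from the integrands onto the fixed test functions, which is legitimate because $\Pi_N$ is self-adjoint on $L^2(\dom)$ and commutes with both the Lebesgue and the It\^o integral. The first convergence is immediate: by the definition of the norm on $L^2(\widetilde\Omega\times(0,T))$ it reads $\widetilde\E\int_0^T(\widetilde u^{(N)}(t)-\widetilde u(t),\phi_1)_{L^2(\dom)}^2\,dt\to 0$, which is exactly \eqref{3.E1}.

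For the second convergence I would write $\Lambda_i^{(N)}(\widetilde u^{(N)},\widetilde W^{(N)},\phi_2)-\Lambda_i(\widetilde u,\widetilde W,\phi_2)=D_0^{(N)}+D_1^{(N)}(t)+D_2^{(N)}(t)$, the three summands corresponding to the initial datum, the diffusion term and the stochastic term, and show that each of them tends to zero in $L^1(\widetilde\Omega\times(0,T))$. Throughout I would use that, $\phi_2$ being in $H^3(\dom)$, the choice of the basis $(e_k)$ gives $\Pi_N\phi_2\to\phi_2$ in $H^3(\dom)$ with $\sup_N\|\Pi_N\phi_2\|_{H^3(\dom)}<\infty$, so that in particular $\na\Pi_N\phi_2\to\na\phi_2$ in $L^\infty(\dom)$ by the embedding $H^3(\dom)\hookrightarrow W^{1,\infty}(\dom)$, valid for $d\le 3$.

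The term $D_0^{(N)}=(\Pi_N\widetilde u_i(0)-\widetilde u_i(0),\phi_2)_{L^2(\dom)}=(\widetilde u_i(0),\Pi_N\phi_2-\phi_2)_{L^2(\dom)}$ does not depend on $t$, converges to zero $\widetilde\Prob$-a.s., and is bounded by $\|\widetilde u_i(0)\|_{L^2(\dom)}\|\Pi_N\phi_2-\phi_2\|_{L^2(\dom)}$ with $\widetilde\E\|\widetilde u_i(0)\|_{L^2(\dom)}^2=\E\|u_i^0\|_{L^2(\dom)}^2<\infty$; Vitali's theorem then yields $D_0^{(N)}\to 0$ in $L^1(\widetilde\Omega)$, hence in $L^1(\widetilde\Omega\times(0,T))$ (alternatively one may invoke \eqref{3.E2}). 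For $D_1^{(N)}$ I would use $\langle\Pi_N\diver F,\phi_2\rangle=-\int_\dom F\cdot\na\Pi_N\phi_2\,dx$ and decompose, for each $s$ and each $j$,
\[
  -\!\int_\dom\!\big(A_{ij}(\widetilde u^{(N)})\na\widetilde u_j^{(N)}-A_{ij}(\widetilde u)\na\widetilde u_j\big)\cdot\na\phi_2\,dx
  \;-\;\int_\dom A_{ij}(\widetilde u^{(N)})\na\widetilde u_j^{(N)}\cdot\na(\Pi_N\phi_2-\phi_2)\,dx .
\]
After summing over $j$ and integrating in $s$ and $t$, the first contribution vanishes in $L^1(\widetilde\Omega\times(0,T))$ by \eqref{3.E3}, while the modulus of the second is at most $T\,\|\na(\Pi_N\phi_2-\phi_2)\|_{L^\infty(\dom)}\,\widetilde\E\int_0^T\sum_j\|A_{ij}(\widetilde u^{(N)})\na\widetilde u_j^{(N)}\|_{L^1(\dom)}\,dt$; the last expectation is bounded uniformly in $N$ because $\|A_{ij}(\widetilde u^{(N)})\na\widetilde u_j^{(N)}\|_{L^1(\dom)}\le C(1+\|(\widetilde u^{(N)})^2\|_{L^2(\dom)})\|\na\widetilde u^{(N)}\|_{L^2(\dom)}$ together with the Cauchy--Schwarz inequality in time and \eqref{3.tilde2}, so that $D_1^{(N)}\to 0$. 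For $D_2^{(N)}$ I would move $\Pi_N$ onto $\phi_2$ and insert $\pm\big(\sum_j\int_0^t\sigma_{ij}(\widetilde u)\,d\widetilde W_j,\Pi_N\phi_2\big)_{L^2(\dom)}$: the genuine difference $\big(\sum_j\int_0^t(\sigma_{ij}(\widetilde u^{(N)})\,d\widetilde W_j^{(N)}-\sigma_{ij}(\widetilde u)\,d\widetilde W_j),\phi_2\big)_{L^2(\dom)}$ tends to zero in $L^2(\widetilde\Omega\times(0,T))$, hence in $L^1$, by \eqref{3.E4}, and the two remaining terms are controlled by the Cauchy--Schwarz inequality in $x$, the It\^o isometry, \eqref{1.sigma}, and the bounds \eqref{3.tilde1} and \eqref{3.uL2}, giving a constant independent of $N$ and $t$ times $\|\Pi_N\phi_2-\phi_2\|_{L^2(\dom)}\to 0$.

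The corollary is thus essentially bookkeeping, and the only point that requires care is the treatment of the projections $\Pi_N$: one must check that transferring $\Pi_N$ onto $\phi_2$ is admissible in each dual or stochastic pairing and that the commutator errors involving $\Pi_N\phi_2-\phi_2$ are genuinely negligible, which relies on $\Pi_N\phi_2\to\phi_2$ in $H^3(\dom)$ and on the uniform estimates of Lemmas \ref{lem.est1} and \ref{lem.3}. Granted this, the result follows from Lemma \ref{lem.E} combined with the Vitali and the dominated convergence theorems.
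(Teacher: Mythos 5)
Your proof is correct and follows essentially the same route as the paper: Fubini's theorem reduces the $L^1(\widetilde\Omega\times(0,T))$ and $L^2(\widetilde\Omega\times(0,T))$ norms to the quantities controlled in Lemma \ref{lem.E}, the three constituents of $\Lambda_i^{(N)}-\Lambda_i$ being handled by \eqref{3.E2}--\eqref{3.E4}, and your explicit bookkeeping of the commutator terms involving $\Pi_N\phi_2-\phi_2$ merely fills in detail the paper leaves implicit. The only point to flag is that your claim $\Pi_N\phi_2\to\phi_2$ in $H^3(\dom)$ (hence $\na\Pi_N\phi_2\to\na\phi_2$ in $L^\infty(\dom)$) is not automatic for an arbitrary orthonormal basis of $L^2(\dom)$ and requires the Galerkin basis to be chosen suitably regular (e.g.\ spectral), a tacit assumption the paper itself relies on when it bounds pairings against $\na\Pi_N\phi$ by $\|\phi\|_{H^3(\dom)}$ in the tightness argument.
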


\begin{proof}
The first convergence follows immediately from the identity
$$
  \big\|(\widetilde u^{(N)},\phi_1)_{L^2(\dom)}
	- (\widetilde u,\phi_1)_{L^2(\dom)}\big\|_{L^2(\widetilde\Omega\times(0,T)}
	= \widetilde\E\int_0^T\big|\big(\widetilde u^{(N)}(t)-\widetilde u(t),
	\phi_1\big)_{L^2(\dom)}\big|^2 dt
$$
and convergence \eqref{3.E1}. For the second convergence, 
let $\phi_2\in H^3(\dom)$ satisfying $\na\phi_2\cdot\nu=0$ on $\pa\dom$.
Fubini's theorem implies that
\begin{align*}
  \big\|\Lambda_i^{(N)}&(\widetilde u^{(N)},\widetilde W^{(N)},\phi_2)
	- \Lambda_i(\widetilde u,\widetilde W,\phi_2)
	\big\|_{L^1(\widetilde\Omega\times(0,T))} \\
	&= \int_0^T\widetilde\E\Big|\Lambda_i^{(N)}(\widetilde u^{(N)},
	\widetilde W^{(N)},\phi_2)
	- \Lambda_i(\widetilde u,\widetilde W,\phi_2)\Big| dt.
\end{align*}
Convergences \eqref{3.E2}-\eqref{3.E4} show that each term in the definition of
$\Lambda_i^{(N)}(\widetilde u^{(N)},\widetilde W^{(N)},\phi_2)$ tends 
to the corresponding terms in 
$\Lambda_i(\widetilde u,\widetilde W,\phi_2)$ at least in
the space $L^1(\widetilde\Omega\times(0,T))$.
\end{proof}

Since $u^{(N)}$ is a strong solution to \eqref{2.approx1}-\eqref{2.approx2},
it satisfies the identity
$$
  (u_i^{(N)}(t),\phi)_{L^2(\dom)} = \Lambda_i^{(N)}(u^{(N)},W,\phi)(t)
	\quad\Prob\mbox{-a.s.}
$$
for all $t\in[0,T]$, $i=1,\ldots,n$, and $\phi\in H^1(\dom)$ and in particular, we have
$$
  \int_0^T\E\Big|(u_i^{(N)}(t),\phi)_{L^2(\dom)} 
	- \Lambda_i^{(N)}(u^{(N)},W,\phi)(t)\Big|dt = 0.
$$
Since the laws $\Law(u^{(N)},W)$ and $\Law(\widetilde u^{(N)},\widetilde W^{(N)})$
coincide, we find that
$$
  \int_0^T\widetilde\E\Big|(\widetilde u_i^{(N)}(t),\phi)_{L^2(\dom)} 
	- \Lambda_i^{(N)}(\widetilde u^{(N)},\widetilde W^{(N)},\phi)(t)\Big|dt = 0.
$$
By Corollary \ref{coro.E}, the limit $N\to\infty$ in this equation yields
$$
  \int_0^T\widetilde\E\Big|(\widetilde u_i(t),\phi)_{L^2(\dom)}
	- \Lambda_i(\widetilde u,\widetilde W,\phi)(t)\Big|dt = 0, \quad i=1,\ldots,n.
$$
This identity holds for all $\phi\in H^3(\dom)$ satisfying $\na\phi\cdot\nu=0$
on $\pa\dom$. By a density argument, it also holds for all $\phi\in H^1(\dom)$.
Hence, for Lebesgue-a.e.\ $t\in(0,T]$ and $\widetilde\Prob$-a.e.\ $\omega\in\widetilde
\Omega$, we deduce that
$$
  (\widetilde u_i(t),\phi)_{L^2(\dom)} - \Lambda_i(\widetilde u,\widetilde W,\phi)(t)
	= 0, \quad i=1,\ldots,n.
$$
By definition of $\Lambda_i$, this means that for Lebesgue-a.e.\ $t\in(0,T]$ and 
$\widetilde\Prob$-a.e.\ $\omega\in\widetilde\Omega$,
\begin{align*}
  (\widetilde u_i(t),\phi)_{L^2(\dom)}
	&= (\widetilde u(0),\phi)_{L^2(\dom)} 
	+ \sum_{j=1}^n\int_0^t\big\langle\diver\big(A_{ij}(\widetilde u(s))
	\na\widetilde u_j(s)\big),\phi\big\rangle ds \\
  &\phantom{xx}{}+ \bigg(\sum_{j=1}^n\int_0^t\sigma_{ij}(\widetilde u(s))
	d\widetilde W_j(s),\phi\bigg)_{L^2(\dom)}.
\end{align*}
Setting $\widetilde U:=(\widetilde\Omega,\widetilde\F,\widetilde\Prob,\widetilde\Fil)$,
we infer that the system $(\widetilde U,\widetilde W,\widetilde u)$ is a martingale
solution to \eqref{1.eq} and the stochastic process $\widetilde u$ satisfies
estimates \eqref{3.uH1} and \eqref{3.uL2}. 

\begin{remark}[Random initial data]\label{rem.ic}\rm
The initial data may be chosen to be random, i.e., we prescribe an inital probability 
measure $\mu^0$ on $L^2(\dom)$ instead of a given initial data. We assume that 
\begin{equation}\label{eq:5.18}
  \int_{L^2(\dom)} \|x\|_{L^2(\dom)}^p d\mu^0(x) < \infty\quad\mbox{for }
	p=\frac{24}{4-d}.
\end{equation} 
Now, in principle, we can carry out the whole analysis also in this case. 
Since for the given initial distribution $\mu^0$ and a given stochastic basis 
$(\Omega, \F, \Fil, \Prob)$, we have an $\mathcal{F}_0$-measurable random variable, 
which we will denote by $u^0$ and whose distribution is $\mu^0$. 
Because of assumption \eqref{eq:5.18}, we have $\E\|u^0\|_{L^2(\dom)}^p < \infty$ and 
consequently, the a priori estimates obtained in section \ref{sec.unif} still hold true. 
As before we can show that the set of measure $\{\Law(u^{(N)}):N\in\N\}$ is tight on 
$Z_T$ and therefore, by the Skorohod-Jakubowski theorem, we obtain a sequence of new 
random variables $(\widetilde{u}^{(N)})_{N\in\N}$ 
(and also a sequence of new Wiener processes) 
which have the same law as the old random variables $u^{(N)}$ on $Z_T$. 
In particular, $\Law(\tilde{u}^{(N)}(0)) = \Law(u^{(N)}(0))$ in $L^2(\dom)$ as well as
$\widetilde{u}^{(N)} \to \widetilde{u}$ in $C^0([0,T];L_w^2(\dom))$ 
$\widetilde\Prob$-a.s.\ and $\widetilde{u}^{(N)}(0) \to \widetilde{u}(0)$ 
in $L^2(\dom)$ weakly 
$\widetilde\Prob$-a.s. We conclude that $\Law(\tilde{u}(0)) = \Law(\tilde{u}^{(N)}(0)) 
= \Law(u^0) = \mu^0$. Thus, we have shown that the process $\widetilde{u}$ has 
the initial measure $\mu^0$ and therefore is the required martingale solution of 
\eqref{1.eq}.
\qed
\end{remark}


\subsection{Nonnegativity of the solutions}\label{sec.pos}

We show that if $u_i^0\ge 0$ in $\dom$ for $i=1,\ldots,n$ and condition
\eqref{1.sigma2} on $\sigma$ holds then $\widetilde u_i$ is nonnegative $\Prob$-a.s.
For this, we employ the technique of \cite{CPT16}. The idea is to approximate
the test function $f(z)=z^-=\max\{0,-z\}$ for $z\in\R$ and to use It\^o's formula.
We define as in \cite[Section 2.4]{CPT16} the following functions:
$$
  f_\eps(z) = \left\{\begin{array}{ll}
	-z &\quad\mbox{if }z\le -\eps, \\
	\displaystyle-3\left(\frac{z}{\eps}\right)^4z - 8\left(\frac{z}{\eps}\right)^3z
	- 6\left(\frac{z}{\eps}\right)^2z &\quad\mbox{if }-\eps\le z\le 0 \\
	0 &\quad\mbox{if }z\ge 0
  \end{array}\right.
$$
for $\eps>0$.
Then $f_\eps$ has at most linear growth, i.e.\ $|f_\eps(z)|\le C|z|$
for all $z\in\R$,
and the functions $f_\eps'$ and $\psi_\eps:=f_\eps f_\eps''+(f_\eps')^2$ are
bounded in $\R$. We set
$$
  F_\eps(v) = \int_\dom f_\eps(v(x))^2dx, \quad 
	F(v) = \int_\dom f(v(x))^2 dx
$$
for square-integrable functions $v:\dom\to\R$. 

We replace the diffusion coefficients
$A_{ij}(u^{(N)})$ in \eqref{2.approx1} by the modified coefficients
$$
  A_{ij}^+(u^{(N)}) = \delta_{ij}\bigg(a_{i0} + \sum_{k=1}^n a_{ik}u_k^2\bigg)
	+ 2a_{ij}u_i^+u_j, \quad i,j=1,\ldots,n,
$$
where $z^+=\min\{0,z\}$ is the positive part of $z\in\R$. 
Observe that generally, $A_{ij}^+(u)\neq A_{ij}(u)$ but if $u_i\ge 0$ for
all $i=1,\ldots,n$ then we obtain the original coefficients, $A_{ij}^+(u)=A_{ij}(u)$.
The proof of Lemma \ref{lem.ex} provides the existence of a
pathwise unique strong solution $u^{(N)}$ to this truncated problem.
The It\^o formula in finite dimensions gives \cite[Formula (3.3)]{CPT16}
\begin{align}
  F_\eps(u_i^{(N)}(t)) &= F_\eps(u_i^{(N)}(0)) \nonumber \\
	&\phantom{xx}{}+ 2\int_0^t\int_\dom f_\eps(u_i^{(N)}(s))f_\eps'(u_i^{(N)}(s))\Pi_N
	\bigg(\sum_{j=1}^n\sigma_{ij}(u^{(N)}(s))\bigg)dxdW_j(s) \nonumber \\
	&\phantom{xx}{}- 2\int_0^t\int_\dom\psi_\eps(u_i^{(N)}(s))
	\sum_{j=1}^n A_{ij}^+(u^{(N)}(s))\na u_i^{(N)}(s)\cdot\na u_j^{(N)}(s)dxds 
	\label{3.Feps} \\
	&\phantom{xx}{}+ \int_0^t\int_\dom\sum_{j=1}^n\sum_{k,\ell=1}^N\sum_{m=1}^\infty
	\psi_\eps(u_i^{(N)}(s))e_ke_\ell\sigma_{ij}^{mk}(u^{(N)}(s))
	\sigma_{ij}^{m\ell}(u^{(N)}(s))dxds \nonumber \\
	&=: I_{\eps,0}^{(N)} + I_{\eps,1}^{(N)} + I_{\eps,2}^{(N)} + I_{\eps,3}^{(N)},
	\nonumber
\end{align}
where $\sigma_{ij}^{km}$ is defined in \eqref{1.sigmadW}. We claim that the
integral $I_{\eps,1}^{(N)}$ is nonpositive. Indeed, we write
\begin{align*}
  I_{\eps,1}^{(N)} &= -2\int_0^t\int_\dom\psi_\eps(u_i^{(N)})A_{ii}^+(u^{(N)})
	|\na u_i^{(N)}|^2 dxds \\
	&\phantom{xx}{}- 2\int_0^t\int_\dom\psi_\eps(u_i^{(N)})\sum_{j\neq i}A_{ij}^+(u^{(N)})
	\na u_i^{(N)}\cdot\na u_j^{(N)} dxds.
\end{align*}
The first term on the right-hand side is clearly nonpositive; the second term vanishes
since $\psi_\eps(u_i^{(N)})=0$ in $\{u_i^{(N)}\ge 0\}$ and $A_{ij}^+(u^{(N)})=0$
in $\{u_i^{(N)}\le 0\}$. This shows that $I_{1,\eps}^{(N)}\le 0$. By \eqref{3.conv},
we know that $u^{(N)}\to u$ in $L^2(0,T;L^2(\dom))$ as $N\to\infty$.
(To be precise, we should work with the new processes $\widetilde u^{(N)}$ but
we omit the tilde.) 
Therefore, up to a subsequence which is not relabeled, $u^{(N)}\to u$
for a.e.\ $(x,t,\omega)\in\dom\times(0,T)\times\Omega$. Following the steps of
\cite[Section 3.2]{CPT16}, we can show the following $\Prob$-a.s.\ convergence 
results as $N\to\infty$:
\begin{align*}
  & F_\eps(u_i^{(N)}(t)) \to F_\eps(u_i(t)), \quad
	I_{\eps,0}^{(N)} \to F_\eps(u_i^0), \\
	& I_{\eps,2}^{(N)} \to 2\int_0^t\int_\dom f_\eps(u_i(s))f_\eps'(u_i(s))
	\sum_{j=1}^n\sigma_{ij}(u(s))dxdW_j(s), \\
	& I_{\eps,3}^{(N)} \to \int_0^t\int_\dom\sum_{j=1}^n\sum_{k,\ell=1}^\infty
	\sum_{m=1}^\infty
	\psi_\eps(u_i(s))e_ke_\ell\sigma_{ij}^{mk}(u(s))\sigma_{ij}^{m\ell}(u(s))dxds.
\end{align*}
Passing to the limit $N\to\infty$ in \eqref{3.Feps} then leads to
\begin{align*}
  F_\eps(u_i(t)) &\le F_\eps(u_i^0) + 2\int_0^t\int_\dom f_\eps(u_i(s))f'_\eps(u_i(s))
	\sum_{j=1}^n\sigma_{ij}(u(s))dW_j(s)dx \\
	&\phantom{xx}{}+\int_0^t\int_\dom
	\psi_\eps(u_i(s))\sum_{j=1}^\infty\sum_{m=1}^\infty\big(\sigma_{ij}(u(s))\eta_m\big)^2
	dxds.
\end{align*}
Taking the mathematical expectation, the stochastic integral vanishes:
\begin{equation}\label{3.Feps2}
  \E F_\eps(u_i(t)) \le \E F_\eps(u_i^0)
	+ \E\int_0^t\int_\dom\psi_\eps(u_i(s))
	\sum_{j=1}^n\sum_{m=1}^\infty\bigg(\sigma_{ij}(u(s))\eta_m\bigg)^2 dxds.
\end{equation}
It is shown in \cite[Section 3.4]{CPT16} that in the limit $\eps\to 0$,
$\Prob$-a.s.\footnote{Observe that there is a typo in 
\cite[formulas (3.21)-(3.24)]{CPT16}: 
The sum from $l=1$ to $\infty$ should be outside the brackets.}
\begin{align*}
  & \E F_\eps(u_i(t)) \to \E\|u_i^-(t)\|_{L^2(\dom)}^2, \quad
	\E F_\eps(u_i^0) \to \E\|(u_i^0)^-\|_{L^2(\dom)}^2, \\
	& \E\int_0^t\int_\dom\psi_\eps(u_i)\sum_{j=1}^n\sum_{m=1}^\infty\bigg(\sigma_{ij}(u)
	\eta_m\bigg)^2 dxds \to \E\int_0^t\sum_{j=1}^n
	\|\sigma_{ij}(-u^-)\|_{\T_2(Y;L^2(\dom))}^2 ds.
\end{align*}
Thus, the limit $\eps\to 0$ in \eqref{3.Feps2} gives
$$
  \E\|u_i^-(t)\|_{L^2(\dom)}^2 \le \E\|(u_i^0)^-\|_{L^2(\dom)}^2
	+ \E\int_0^t\sum_{j=1}^n\|\sigma_{ij}(-u_i^-(s))\|_{\T_2(Y;L^2(\dom))}^2 ds.
$$
The first term on the right-hand side vanishes since $u_i^0\ge 0$. For the
second term, we employ the linear growth \eqref{1.sigma2} of $\sigma_{ij}$,
showing that
$$
  \E\|u_i^-(t)\|_{L^2(\dom)}^2 
	\le \E\int_0^t\|u_i^-(s)\|_{L^2(\dom)}^2 ds.
$$
Gronwall's lemma implies that $\E\|u_i^-(t)\|_{L^2(\dom)}^2=0$ for $t\in(0,T)$
and consequently, $u_i(t)\ge 0$ in $\dom$, $\Prob$-a.s. for a.e.\ $t\in[0,T]$
and all $i=1,\ldots,n$. This finishes the proof.


\begin{appendix}
\section{Some results from stochastic analysis}\label{app}

\subsection{Results for stochastic processes}

The following particular It\^o formula is proved in \cite[Theorem 4.2.5]{PrRo07}.

\begin{theorem}[It\^o formula]\label{thm.ito}
Let $V\subset H\subset V'$ be a Gelfand triple and $U$ be a separable Hilbert space, 
$X_0\in L^2(\Omega;H)$, and let $a\in L^2(\Omega\times(0,T);V')$, 
$b\in L^2(\Omega\times(0,T);\T_2(U,H))$ be progressively measurable. Define the 
stochastic process
$$
  X(t) = X_0 + \int_0^t a(s)ds + \int_0^t b(s)dW(s), \quad t\in(0,T).
$$
Then
\begin{align*}
  \frac12\|X(t)\|_H^2 
	&= \frac12\|X_0\|_H^2 + \int_0^t \langle a(s),X(s)\rangle_{V',V} ds
	+ \frac12\int_0^t\|b(s)\|_{\T_2(U,H)}^2 ds \\
	&\phantom{xx}{}+ \int_0^t(X(s),b(s)dW(s))_H \quad\mbox{for }t\in(0,T),
\end{align*}
where $\langle\cdot,\cdot\rangle_{V',V}$ is the duality pairing between $V'$
and $V$, $(\cdot,\cdot)_H$ is the inner product in $H$, and $X(s)\in
L^2(\Omega\times(0,T);V)$ in $\langle a(s),X(s)\rangle_{V',V}$
is any $V$-valued progressively measurable 
$dt\otimes\Prob$ version of the equivalence class represented by $X(s)$.
\end{theorem}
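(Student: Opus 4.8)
The plan is to derive this Gelfand-triple It\^o formula from the classical finite-dimensional It\^o formula by a Galerkin projection and then pass to the limit; there is a competing route via time-mollification, but I would take the projection route since the difficulty is then concentrated in one clean place, namely choosing a projection that behaves well on $V$, $H$, and $V'$ simultaneously. Concretely, I would fix a positive self-adjoint operator $\Lambda$ on $H$ with compact resolvent whose form domain is $V$ (for the present paper one may take $\Lambda=I-\Delta$ with homogeneous Neumann conditions, $H=L^2(\dom)$, $V=H^1(\dom)$) and let $(e_k)_{k\in\N}\subset V$ be an orthonormal basis of $H$ consisting of eigenvectors of $\Lambda$. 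Writing $H_m=\operatorname{span}\{e_1,\dots,e_m\}$ and $\Pi_m$ for the $H$-orthogonal projection onto $H_m$, this choice guarantees that $\Pi_m$ is also a contraction on $V$ and on $V'$, that $\Pi_m v\to v$ in $V$ (resp.\ $V'$) for every $v\in V$ (resp.\ $v\in V'$), and that $\langle g,e_k\rangle_{V',V}=(g,e_k)_H$ whenever $g\in H$, so in particular $(\Pi_m g,w)_H=\langle g,\Pi_m w\rangle_{V',V}$ for $g\in V'$ and $w\in V$.

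Next I would localize by stopping times: replacing $(0,T)$ by $(0,T\wedge\tau_R)$ with $\tau_R=\inf\{t:\|X(t)\|_H^2+\int_0^t(\|a(s)\|_{V'}^2+\|b(s)\|_{\T_2(U,H)}^2)\,ds\ge R\}$, one may assume all quantities entering the formula are uniformly bounded and let $R\to\infty$ at the very end. Set $X^{(m)}(t):=\Pi_m X(t)=\Pi_m X_0+\int_0^t\Pi_m a(s)\,ds+\int_0^t\Pi_m b(s)\,dW(s)$. Since $X^{(m)}$ is a genuine $H_m$-valued It\^o process, the classical It\^o formula applied to $x\mapsto\frac12\|x\|_H^2$ yields
\begin{align*}
  \frac12\|X^{(m)}(t)\|_H^2 &= \frac12\|\Pi_m X_0\|_H^2
  + \int_0^t\langle a(s),\Pi_m X(s)\rangle_{V',V}\,ds \\
  &\phantom{xx}{}+ \frac12\int_0^t\|\Pi_m b(s)\|_{\T_2(U,H)}^2\,ds
  + \int_0^t\big(X^{(m)}(s),\Pi_m b(s)\,dW(s)\big)_H,
\end{align*}
where I have used $(\Pi_m a(s),X^{(m)}(s))_H=\langle a(s),\Pi_m X(s)\rangle_{V',V}$ from the first step.

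Then I would pass to the limit $m\to\infty$ term by term. Because $X(t)\in H$ and $X_0\in L^2(\Omega;H)$, the left-hand side and the initial term converge. For the drift term I would use that the $V$-valued $dt\otimes\Prob$-version of $X$ satisfies $\Pi_m X(s)\to X(s)$ in $V$ for a.e.\ $(s,\omega)$ together with the domination $|\langle a(s),\Pi_m X(s)\rangle_{V',V}|\le\|a(s)\|_{V'}\|X(s)\|_V\in L^1((0,T\wedge\tau_R))$ and dominated convergence, which gives convergence uniformly in $t$, $\Prob$-a.s. The trace term converges by monotone convergence, $\|\Pi_m b(s)\|_{\T_2(U,H)}\nearrow\|b(s)\|_{\T_2(U,H)}$, and the stochastic integral converges in $L^2(\Omega)$ uniformly in $t$ by Doob's inequality and the It\^o isometry, using pointwise convergence of the integrands and the bound $\|\Pi_m b(s)\|_{\T_2(U,H)}\|X^{(m)}(s)\|_H\le\|b(s)\|_{\T_2(U,H)}\|X(s)\|_H$ together with dominated convergence. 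Extracting a common subsequence one obtains the identity for a.e.\ $t$, $\Prob$-a.s.; since the right-hand side is $\Prob$-a.s.\ continuous in $t$, the identity holds for all $t\in(0,T\wedge\tau_R)$ and, letting $R\to\infty$, for all $t\in(0,T)$, which in passing also furnishes the $H$-continuous modification of $X$.

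The main obstacle is the one flagged in the first step: an arbitrary $H$-orthogonal projection need not be bounded on $V$, so without the spectral choice of basis the term $\langle a(s),\Pi_m X(s)\rangle_{V',V}$ cannot be controlled uniformly in $m$. A secondary, bookkeeping-type difficulty is keeping track of the distinction between $X$ as an $H$-valued (indeed $H$-continuous) process and the $V$-valued $dt\otimes\Prob$-version actually inserted into the duality pairing, which is precisely the point the statement is careful about.
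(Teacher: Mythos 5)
You should first be aware that the paper does not prove this theorem at all: it is quoted verbatim from Pr\'ev\^ot--R\"ockner \cite[Theorem 4.2.5]{PrRo07} (going back to Krylov--Rozovskii), whose proof proceeds by partitioning the time interval and passing to the limit along the partitions, not by a spatial Galerkin projection. Your spectral-projection route is therefore a genuinely different strategy, and it is a recognized alternative when the triple admits such projections; however, as written it has gaps at exactly the delicate points of the theorem. First, your localization is circular: a priori $X$ is only a continuous $V'$-valued process (the Bochner integral of $a$ lives in $V'$), and the fact that $t\mapsto X(t)$ is $H$-valued with $\sup_t\|X(t)\|_H<\infty$ and $H$-continuous is part of the conclusion; hence $\tau_R=\inf\{t:\|X(t)\|_H^2+\dots\ge R\}$ is not yet known to be a well-defined stopping time. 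You must localize with quantities you actually control, e.g.\ $\int_0^t(\|X(s)\|_V^2+\|a(s)\|_{V'}^2+\|b(s)\|_{\T_2(U,H)}^2)ds$ computed with the $V$-valued version, or better, first prove the uniform estimate $\sup_m\E\big(\sup_{t\le T}\|X^{(m)}(t)\|_H^2\big)<\infty$ at the Galerkin level via Burkholder--Davis--Gundy and Gronwall. This estimate is not cosmetic: your dominated-convergence argument for the martingale term needs $\E\int_0^T\|b(s)\|_{\T_2(U,H)}^2\|X(s)\|_H^2ds<\infty$, which does not follow from the stated hypotheses ($b\in L^2$, $X\in L^2(\Omega\times(0,T);V)$, $X_0\in L^2(\Omega;H)$), so without the uniform Galerkin bound the passage to the limit in the stochastic integral has no dominating function.

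Two further points. The existence of an orthonormal eigenbasis of $\Lambda$ requires a compact resolvent, i.e.\ compactness of $V\hookrightarrow H$, which is not among the Gelfand-triple hypotheses; for the theorem in the stated generality you should instead use the spectral projections $\Pi_\lambda=\mathbf{1}_{[0,\lambda]}(\Lambda)$ of the operator associated with the $V$-form, which retain all the properties you need ($H$-orthogonality, uniform boundedness on $V$ and $V'$, strong convergence). Finally, the concluding step ``identity for a.e.\ $t$ plus continuity of the right-hand side $\Rightarrow$ identity for all $t$'' is glossed: at the exceptional times $X(t)$ need not even belong to $H$, and $\|\Pi_m X(t)\|_H$ may diverge there. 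What your argument actually yields is an $H$-continuous modification satisfying the identity for all $t$; to conclude that it is a modification of $X$ (and hence that $X$ itself is $H$-continuous and satisfies the formula everywhere) you still need the standard upgrade: $V'$-continuity together with local boundedness in $H$ gives weak $H$-continuity, and the norm identity then converts weak into strong continuity. All of these gaps are repairable, but they are precisely the substance of the theorem, so they should be carried out rather than asserted.
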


The next proposition can be found in \cite[Prop. 2.10]{Kru14}.

\begin{proposition}[It\^o isometry]\label{prop.iso}
Let $\sigma(u)\in L^2((0,T)\times\Omega;\T_2(Y;L^2(\dom)))$ be a predictable
stochastic process. Then
$$
  \E\bigg(\int_0^T\sigma(u(s))dW(s)\bigg)^2 
	= \E\int_0^T\|\sigma(u)\|_{\T_2(Y;L^2(\dom))}^2ds.
$$
\end{proposition}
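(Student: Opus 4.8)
The plan is to establish the identity first for elementary (simple) integrands and then extend it by the density argument underlying the construction of the Itô integral in Hilbert spaces; here the square on the left-hand side is understood as the squared $L^2(\dom)$-norm of the $L^2(\dom)$-valued random variable $\int_0^T\sigma(u(s))dW(s)$.

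First I would treat a simple process of the form $\sigma(s)=\sum_{k=0}^{m-1}\sigma_k\mathbf{1}_{(t_k,t_{k+1}]}(s)$ with $0=t_0<\cdots<t_m=T$ and each $\sigma_k:\Omega\to\T_2(Y;L^2(\dom))$ being $\F_{t_k}$-measurable and bounded. For such $\sigma$ the stochastic integral is the finite sum $\int_0^T\sigma(s)dW(s)=\sum_{k=0}^{m-1}\sigma_k\big(W(t_{k+1})-W(t_k)\big)$, which is a genuine element of $L^2(\Omega;L^2(\dom))$ precisely because each $\sigma_k$ is Hilbert--Schmidt. Expanding the squared $L^2(\dom)$-norm and taking expectations, the off-diagonal terms with indices $j<k$ vanish: conditioning on $\F_{t_k}$, the factor $\sigma_j(W(t_{j+1})-W(t_j))$ is $\F_{t_k}$-measurable, while $W(t_{k+1})-W(t_k)$ has mean zero and is independent of $\F_{t_k}$. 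For the diagonal terms I would use the expansion $W_j(t)=\sum_{\ell\in\N}\eta_\ell\beta_{j\ell}(t)$: the Brownian increments $\beta_{j\ell}(t_{k+1})-\beta_{j\ell}(t_k)$ are mutually independent in $\ell$, independent of $\F_{t_k}$, and of variance $t_{k+1}-t_k$, so the conditional expectation of $\|\sigma_k(W(t_{k+1})-W(t_k))\|_{L^2(\dom)}^2$ given $\F_{t_k}$ equals $(t_{k+1}-t_k)\sum_{\ell}\|\sigma_k\eta_\ell\|_{L^2(\dom)}^2=(t_{k+1}-t_k)\|\sigma_k\|_{\T_2(Y;L^2(\dom))}^2$. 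Summing over $k$ and taking expectations yields the identity $\E\big\|\int_0^T\sigma\,dW\big\|_{L^2(\dom)}^2=\E\int_0^T\|\sigma(s)\|_{\T_2(Y;L^2(\dom))}^2\,ds$ for simple $\sigma$.

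Finally I would remove the simplicity assumption. For any predictable $\sigma\in L^2((0,T)\times\Omega;\T_2(Y;L^2(\dom)))$ there is a sequence of simple processes $\sigma_n$ with $\sigma_n\to\sigma$ in $L^2((0,T)\times\Omega;\T_2(Y;L^2(\dom)))$; by the isometry just proved, $\big(\int_0^T\sigma_n\,dW\big)_n$ is Cauchy in $L^2(\Omega;L^2(\dom))$, and its limit is by definition $\int_0^T\sigma\,dW$. Passing to the limit in the identity $\E\|\int_0^T\sigma_n\,dW\|_{L^2(\dom)}^2=\E\int_0^T\|\sigma_n\|_{\T_2(Y;L^2(\dom))}^2\,ds$, using continuity of the norms on both sides, gives the claim. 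The only point requiring a little care — and the nearest thing to an obstacle — is that the cylindrical process $W$ does not converge in $Y$ but only in a larger Hilbert space $Y_0$; this is harmless, since the Hilbert--Schmidt hypothesis on the integrand is exactly what makes each term $\sigma_k(W(t_{k+1})-W(t_k))$ a well-defined $L^2(\dom)$-valued random variable and makes the series representation of $W$ usable term by term, so no further difficulty arises.
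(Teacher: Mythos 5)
Your proof is correct. The paper does not prove this proposition at all -- it is quoted from the literature (Kruse, Prop.\ 2.10 of \cite{Kru14}), where the isometry is built into the construction of the stochastic integral; your argument (isometry for elementary processes via conditioning and orthogonality of increments, then extension by density of elementary processes in $L^2((0,T)\times\Omega;\T_2(Y;L^2(\dom)))$) is exactly the standard argument behind that citation. You also correctly identify and dispose of the only delicate point, namely that $W$ lives in $Y_0$ rather than $Y$, so that $\sigma_k(W(t_{k+1})-W(t_k))$ must be read as the series $\sum_\ell \sigma_k(\eta_\ell)(\beta_\ell(t_{k+1})-\beta_\ell(t_k))$, convergent in $L^2(\Omega;L^2(\dom))$ precisely by the Hilbert--Schmidt hypothesis, and that the square on the left-hand side means the squared $L^2(\dom)$-norm.
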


This result can be generalized in the following sense; see \cite[Prop. 2.12]{Kru14}.

\begin{proposition}[Burkholder-Davis-Gundy inequality]\label{prop.bdg}
Let $p\ge 2$ and let 
$\sigma:L^2(\dom)\times[0,T]\times\Omega\to \T_2(Y;L^2(\dom))$ be
a predictible stochastic process such that
$$
  \E\bigg(\int_0^T\|\sigma(u(s))\|_{\T_2(Y;L^2(\dom))}^2 ds\bigg)^{p/2} < \infty.
$$
Then, for some $C>0$ depending on $p$,
$$
  \E\bigg|\int_0^T\sigma(u(s))dW(s)\bigg|^p
	\le C\E\bigg(
	\int_0^T\|\sigma(u(s))\|_{\T_2(Y;L^2(\dom))}^2 ds\bigg)^{p/2}.
$$
\end{proposition}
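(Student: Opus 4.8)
The plan is to reduce the Hilbert-space Burkholder--Davis--Gundy inequality to the classical scalar one by applying the It\^o formula of Theorem \ref{thm.ito} to the squared norm of the stochastic integral. It suffices to prove the stronger estimate with a supremum,
$$
  \E\Big(\sup_{t\in[0,T]}\Big\|\int_0^t\sigma(u(s))dW(s)\Big\|_{L^2(\dom)}^p\Big)
  \le C\,\E\Big(\int_0^T\|\sigma(u(s))\|_{\T_2(Y;L^2(\dom))}^2 ds\Big)^{p/2},
$$
since the claimed inequality is recovered by taking $t=T$ under the supremum. Write $M(t):=\int_0^t\sigma(u(s))dW(s)$ and $V(t):=\int_0^t\|\sigma(u(s))\|_{\T_2(Y;L^2(\dom))}^2 ds$. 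By a routine localisation --- stopping at $\tau_R:=\inf\{t\in[0,T]:V(t)\ge R\}\wedge T$, proving the bound for $M(\cdot\wedge\tau_R)$ and $V(\cdot\wedge\tau_R)$, and letting $R\to\infty$ by monotone convergence --- together with an approximation of $\sigma(u)$ by bounded, time-piecewise-constant integrands, one may assume in addition that $M$ is a genuine continuous $L^2(\dom)$-valued martingale with $\E\sup_{t\in[0,T]}\|M(t)\|_{L^2(\dom)}^p<\infty$. This finiteness is what will legitimise the absorption step below.

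Next I would apply Theorem \ref{thm.ito} with the trivial Gelfand triple $L^2(\dom)=H$, drift $a\equiv 0$, and $b=\sigma(u)$, and to $X=M$; after multiplying by $2$ this yields, for $t\in[0,T]$,
$$
  \|M(t)\|_{L^2(\dom)}^2 = \mathcal N(t) + V(t), \qquad
  \mathcal N(t):=2\int_0^t\big(M(s),\sigma(u(s))dW(s)\big)_{L^2(\dom)},
$$
where $\mathcal N$ is a real-valued continuous martingale and $V$ is nondecreasing. A standard computation of the quadratic variation of a stochastic integral (the differential form of the It\^o isometry, Proposition \ref{prop.iso}) together with the Cauchy--Schwarz inequality in $Y$ gives
$$
  [\mathcal N]_t = 4\int_0^t\sum_{k\in\N}\big(M(s),\sigma(u(s))\eta_k\big)_{L^2(\dom)}^2 ds
  \le 4\int_0^t\|M(s)\|_{L^2(\dom)}^2\,\|\sigma(u(s))\|_{\T_2(Y;L^2(\dom))}^2 ds
  \le 4\,S\,V(T),
$$
where $S:=\sup_{t\in[0,T]}\|M(t)\|_{L^2(\dom)}^2$.

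Finally I would combine the \emph{scalar} Burkholder--Davis--Gundy inequality, applied to $\mathcal N$ with exponent $p/2\ge1$, with the trivial bound $S\le\sup_{t\in[0,T]}|\mathcal N(t)|+V(T)$, the estimate on $[\mathcal N]_T$, and the Young inequality:
$$
  \E\,S^{p/2}
  \le C\,\E\Big(\sup_{t\in[0,T]}|\mathcal N(t)|\Big)^{p/2}+C\,\E\,V(T)^{p/2}
  \le C\,\E\,[\mathcal N]_T^{p/4}+C\,\E\,V(T)^{p/2}
  \le C\,\E\big(S^{p/4}V(T)^{p/4}\big)+C\,\E\,V(T)^{p/2}
  \le \frac12\,\E\,S^{p/2}+C\,\E\,V(T)^{p/2}.
$$
Since $\E\,S^{p/2}<\infty$ by the first paragraph, the first term on the right is absorbed into the left-hand side, giving $\E\,S^{p/2}\le C\,\E\,V(T)^{p/2}$; undoing the localisation by monotone convergence then produces the asserted inequality. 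I expect the main obstacle to be precisely the circularity concealed in this absorption: one cannot assume $\E\,S^{p/2}<\infty$ a priori for a general square-integrable $\sigma$, so the argument must first be carried out for bounded (ultimately elementary) integrands, for which the conditionally Gaussian structure makes all polynomial moments finite, and only then be extended by the localisation/monotone-convergence passage. A pleasant feature of this route is that It\^o's formula is used only for the smooth map $x\mapsto\|x\|_{L^2(\dom)}^2$; one never differentiates $x\mapsto\|x\|^p$, which fails to be twice differentiable at the origin when $2\le p<4$.
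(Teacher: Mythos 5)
The paper offers no proof of this proposition to compare against: it is recalled in the appendix as a known result and simply quoted from \cite[Prop.~2.12]{Kru14}. Your proposal supplies the standard self-contained derivation, and it is correct in outline: It\^o's formula (Theorem \ref{thm.ito} with $V=H=L^2(\dom)$, $a\equiv 0$, $b=\sigma(u)$) gives $\|M(t)\|_{L^2(\dom)}^2=\mathcal N(t)+V(t)$; the bound $[\mathcal N]_T\le 4SV(T)$ is right (though the Cauchy--Schwarz step is taken in $L^2(\dom)$ for each basis vector $\eta_k$, not ``in $Y$'' --- a harmless slip); and the scalar Burkholder--Davis--Gundy inequality at exponent $p/2\ge 1$ combined with Young's inequality yields $\E S^{p/2}\le \tfrac12\E S^{p/2}+C\,\E V(T)^{p/2}$. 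You also correctly flag the one delicate point, that the absorption requires $\E S^{p/2}<\infty$ a priori, and your fix (elementary bounded integrands first, then localisation and monotone convergence) works; a slightly cleaner route is to localise instead at $\tau_R=\inf\{t:\|M(t)\|_{L^2(\dom)}\ge R\}$, since continuity of $M$ then gives $S\le R^2$ for the stopped integral, so the absorption is immediately legitimate and the elementary-integrand approximation can be skipped, the general case following by Fatou as $R\to\infty$. As for what each approach buys: the paper's citation keeps the appendix brief and defers to the literature where essentially this same It\^o-plus-scalar-BDG argument is carried out, while your route makes the estimate self-contained at the cost of the approximation/localisation bookkeeping --- and it proves the stronger maximal form with $\sup_{t\in[0,T]}$ inside the expectation, which is in fact the version actually invoked in \eqref{3.dW} and in the proof of Lemma \ref{lem.estp}, so stating and proving it in that form would even serve the paper better than the terminal-time statement given in Proposition \ref{prop.bdg}.
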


\subsection{Finite-dimensional stochastic differential equations}

We state a result on the 
existence of the pathwise unique strong solution to the stochastic differential
equation on $\R^n$ (essentially taken from \cite[Theorem 3.1.1]{PrRo07}; originally 
from \cite{Kry99}),
\begin{equation}\label{2.sde}
  \pi\cdot dX(t) = a(X,t)dt + b(X,t)dW(t), \quad t>0, \quad X(0)=X_0.
\end{equation}
Here, $\pi=(\pi_1,\ldots,\pi_n)\in(0,\infty)^n$, 
$a:\R^n\times[0,\infty)\times\Omega\to\R^n$ and
$b:\R^n\times[0,\infty)\times\Omega\to\R^{n\times m}$ are both continuous
in $x\in\R^n$ for fixed $t\in[0,\infty)$, $\omega\in\Omega$, 
progressively measurable, and satisfy
for all $R$, $T>0$,
\begin{equation}\label{2.ab1}
  \int_0^T\sup_{|x|\le R}\big(|a(x,t)|^2 + |b(x,t)|^2\big)dt < \infty
	\quad\mbox{in }\Omega,
\end{equation}
where $|a(x,t)|$ is the Euclidean norm on $\R^n$ and $|b(x,t)|$ is the 
Frobenius norm on $\R^{n\times m}$. Furthermore, we assume that for all
$R$, $t>0$, and $x$, $y\in\R^n$ with $|x|$, $|y|\le R$,
\begin{equation}\label{2.ab2}
\begin{aligned}
  2\big(a(x,t)-a(y,t),x-y\big) + \big|b(x,t)-b(y,t)\big|^2 &\le K_R(t)|x-y|^2, \\
	2(a(x,t),x) + |b(x,t)|^2 &\le K_1(t)(1+|x|^2),
\end{aligned}
\end{equation}
where for every $R>0$, $K_R(t)$ is an $\R_+$-valued $\F_t$-adapted process satisyfing
$\int_0^T K_R(t)dt<\infty$ in $\Omega$ for all $R$, $T>0$. 
We call $X$ the pathwise strong solution to \eqref{2.sde} if 
$X(t)=(X_1(t),\ldots,X_n(t))$ 
for $t\ge 0$ is a $\Prob$-a.s.\ continuous $\R^n$-valued $\F_t$-adapted
process such that $\Prob$-a.s. for all $t\ge 0$,
\begin{equation}\label{2sde2}
  \pi_i X_i(t) = \pi_i X_{0i} + \int_0^t a_i(X(s),s)ds 
	+ \int_0^t \sum_{j=1}^m b_{ij}(X(s),s)dW_j(s), \quad
	i=1,\ldots,n.
\end{equation} 

\begin{theorem}[Existence of solutions]\label{thm.sde}
Let Assumptions \eqref{2.ab1}-\eqref{2.ab2} hold and let $X_0:\Omega\to\R^n$
be $\F_0$-measurable. Then there exists a (up to $\Prob$-indistinguishability)
pathwise unique strong solution to \eqref{2.sde}.
\end{theorem}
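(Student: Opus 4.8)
The plan is to reduce the statement to a classical existence-and-uniqueness theorem for It\^o stochastic differential equations on $\R^n$ with progressively measurable, locally monotone coefficients of linear growth. First I would divide the $i$-th equation in \eqref{2.sde} by $\pi_i>0$; this brings the system into the standard form $dX=\widetilde a(X,t)dt+\widetilde b(X,t)dW(t)$ with $\widetilde a_i=a_i/\pi_i$ and $\widetilde b_{ij}=b_{ij}/\pi_i$, and the rescaled coefficients still satisfy \eqref{2.ab1} and \eqref{2.ab2} with $K_R$, $K_1$ replaced by admissible multiples. Hence it suffices to treat $\pi=(1,\ldots,1)$, and the claim is then exactly \cite[Theorem 3.1.1]{PrRo07} (originally \cite{Kry99}). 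For completeness I would sketch the three-step argument behind that result.

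\emph{Truncation and local solutions.} For $R>0$ I would choose a suitable truncation $a_R$, $b_R$ of the coefficients that agrees with $a$, $b$ on $\{|x|\le R\}$, is bounded in the sense of \eqref{2.ab1}, and is \emph{globally} monotone, i.e.\ $2(a_R(x,t)-a_R(y,t),x-y)+|b_R(x,t)-b_R(y,t)|^2\le \widetilde K_R(t)|x-y|^2$ with $\int_0^T\widetilde K_R(t)\,dt<\infty$ a.s.; constructing such a truncation is part of the standard machinery. By the classical existence theory for SDEs with globally monotone coefficients, the truncated problem has a pathwise unique continuous $\F_t$-adapted strong solution $X^R$ with $X^R(0)=X_0$.

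\emph{Concatenation and non-explosion.} Define the stopping times $\tau_R:=\inf\{t\ge 0:|X^R(t)|\ge R\}$, with $\inf\emptyset=+\infty$. On $[0,\tau_R)$ the truncation is inactive, so $X^R$ solves \eqref{2.sde}; pathwise uniqueness for the truncated equation at level $R'\ge R$ forces $X^{R'}=X^R$ on $[0,\tau_R]$, hence $(\tau_R)_R$ is nondecreasing and the processes glue to a strong solution $X$ of \eqref{2.sde} on $[0,\tau_\infty)$, $\tau_\infty:=\sup_R\tau_R$. To see $\tau_\infty=+\infty$ a.s., I would apply the It\^o formula to $t\mapsto|X^R(t\wedge\tau_R)|^2$, use the coercivity bound $2(a(x,t),x)+|b(x,t)|^2\le K_1(t)(1+|x|^2)$ for the drift and the Burkholder--Davis--Gundy inequality for the martingale part, and conclude via Gronwall's lemma that $\E\sup_{t\in[0,T]}|X^R(t\wedge\tau_R)|^2\le C_T(1+\E|X_0|^2)$ with $C_T$ independent of $R$. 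Chebyshev's inequality then gives $\Prob(\tau_R\le T)\le C_T(1+\E|X_0|^2)/R^2\to 0$ as $R\to\infty$, so $\tau_\infty>T$ a.s.; since $T$ is arbitrary, $\tau_\infty=+\infty$ a.s.\ and $X$ is a global strong solution satisfying \eqref{2sde2}.

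\emph{Uniqueness.} If $X$ and $Y$ are two strong solutions, I would set $\rho_R:=\inf\{t:|X(t)|\vee|Y(t)|\ge R\}$, apply It\^o's formula to $|X-Y|^2$ on $[0,t\wedge\rho_R]$, invoke the local monotonicity bound (first line of \eqref{2.ab2}), and use Gronwall's lemma to obtain $X=Y$ on $[0,\rho_R]$; letting $R\to\infty$ (with $\rho_R\to\infty$ by the non-explosion estimate applied to each solution) yields indistinguishability. The hard part is the truncated-existence step: because $a$ and $b$ are only progressively measurable in $(t,\omega)$ rather than deterministic, one cannot quote the elementary Lipschitz SDE theorem and must work with the version allowing random, time-dependent monotone coefficients and integrable random constants $K_R(t)$, $K_1(t)$ --- this is precisely what is established in \cite{Kry99} and \cite[Theorem 3.1.1]{PrRo07} --- whereas the a priori energy estimate and the uniqueness argument are routine once the localized solutions are available.
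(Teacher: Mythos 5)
Your reduction step is where the proposal breaks down. You divide the $i$-th equation by $\pi_i$ and assert that the rescaled coefficients $\widetilde a_i=a_i/\pi_i$, $\widetilde b_{ij}=b_{ij}/\pi_i$ "still satisfy \eqref{2.ab1} and \eqref{2.ab2} with $K_R$, $K_1$ replaced by admissible multiples." Condition \eqref{2.ab1} does survive, but \eqref{2.ab2} does not: it controls only the combinations $2(a(x,t)-a(y,t),x-y)+|b(x,t)-b(y,t)|^2$ and $2(a(x,t),x)+|b(x,t)|^2$, and the componentwise rescaling weights the drift term by $\pi_i^{-1}$ but the noise term by $\pi_i^{-2}$ (and, for $n\ge 2$, turns the Euclidean pairing into a weighted one), so cancellations between the two terms are destroyed. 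A concrete counterexample already occurs for $n=1$: take $\pi_1=\eps<1/2$, $a(x)=-x^3$, $b(x)=x^2$. Then $2(a(x)-a(y))(x-y)+|b(x)-b(y)|^2=-(x^2+y^2)(x-y)^2\le 0$ and $2a(x)x+|b(x)|^2=-x^4\le 0$, so \eqref{2.ab2} holds with $K_R=K_1=0$; but the rescaled coefficients give $2\widetilde a(x)x+\widetilde b(x)^2=\eps^{-1}(\eps^{-1}-2)\,x^4$, which grows quartically and violates the weak coercivity line of \eqref{2.ab2} for every choice of $K_1(t)$. Hence the problem you pass to \cite[Theorem 3.1.1]{PrRo07} need not satisfy that theorem's hypotheses, and since the presence of the constant vector $\pi$ is precisely the only point at which Theorem \ref{thm.sde} differs from the quoted classical result, this is a genuine gap rather than a cosmetic omission.

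The paper handles $\pi$ differently: it does not reduce to the unweighted case at all, but observes that the proof of \cite[Theorem 3.1.1]{PrRo07} (via the Euler method, going back to \cite{Kry99}) can be repeated verbatim for \eqref{2.sde} because $\pi$ is a constant vector with $\min_{i}\pi_i>0$; the given conditions \eqref{2.ab1}--\eqref{2.ab2} are then used as stated, paired with the weighted quantities that naturally arise from $\pi_i\,dX_i$ (e.g.\ $\sum_i\pi_iX_i^2$), rather than being transported to rescaled coefficients. Your truncation--localization--non-explosion--Gronwall outline and the honest acknowledgment that the truncated existence step must be quoted from \cite{Kry99}/\cite{PrRo07} are otherwise reasonable (if different in flavour from the Euler-scheme proof the paper invokes), but as written everything downstream rests on the invalid rescaling; to repair it you would either have to rerun the cited proof with the weight $\pi$ carried along, as the paper does, or verify a correspondingly weighted version of \eqref{2.ab2} for the specific coefficients at hand instead of claiming the original conditions are preserved.
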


The proof is the same as in \cite[Theorem 3.1.1]{PrRo07}. The difference to this
theorem is the appearance of the constant vector $\pi$ 
on the left-hand side of \eqref{2.sde}.
As the proof in \cite{PrRo07} is based on the Euler method and the vector is constant, 
this appearance does
not change the arguments. We just have to take into account that
$\min_{i=1,\ldots,n}\pi_i$ is positive.

\subsection{Tightness}

We recall some definitions and results on the tightness of
families of probability measures.
Let $E$ be a separable Banach space with norm $\|\cdot\|_E$ and associated Borel
$\sigma$-field $\B(E)$. 

\begin{definition}[Tightness]
The family $\Lambda$ of probability measures on $(E,\B(E))$ is said to be
{\em tight} if and only if for any $\eps>0$, there exists a compact set $K_\eps\subset E$
such that 
$$
  \mu(K_\eps)\ge 1-\eps\quad\mbox{for all }\mu\in\Lambda.
$$
\end{definition}

The theorem of Skorokhod allows for the representation of the limit measure of a
weakly convergent sequence of probability measures on a metric space
as the law of a pointwise convergent
sequence of random variables defined on a common probability space. 
Since our space $Z_T$, defined in \eqref{3.Z}, is not a metric space,
we use Jakubowski's generalization of the Skorokhod Theorem,
in the form given in \cite[Theorem C.1]{BrOn10} (see the original theorem in
\cite{Jak97}). This version is valid for topological spaces. 

\begin{theorem}[Skorokhod-Jakubowski]\label{thm.skoro}
Let $Z$ be a topological space such that there exists a sequence $(f_m)_{m\in\N}$
of continuous functions $f_m:Z\to\R$ that separate points of $Z$. Let $S$ be the
$\sigma$-algebra generated by $(f_m)_{m\in\N}$. Then
\begin{enumerate}
\item Every compact subset of $Z$ is metrizable.
\item If $(\mu_m)_{m\in\N}$ is a tight sequence of probability measures on $(Z,S)$,
then there exists a subquence $(\mu_{m_k})_{k\in\N}$, a probability space
$(\widetilde\Omega,\widetilde\F,\widetilde\Prob)$, and $Z$-valued Borel measurable
random variables $\xi_k$ and $\xi$ such that {\rm (i)} $\mu_{m_k}$ is the law of $\xi_k$
and {\em (ii)} $\xi_k\to\xi$ almost surely on $\Omega$.
\end{enumerate}
\end{theorem}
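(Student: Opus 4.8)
The plan is to reduce the statement to the classical Skorokhod representation theorem on the Polish space $\R^\N$, using the separating sequence to realise $Z$ as a continuous injective image inside $\R^\N$. Define $F\colon Z\to\R^\N$ by $F(z)=(f_m(z))_{m\in\N}$. Then $F$ is continuous, injective (the $f_m$ separate points), and $(S,\B(\R^\N))$-measurable, since the $F$-preimage of a cylinder set is a finite intersection of sets $f_m^{-1}(B)\in S$; moreover $Z$ is automatically Hausdorff, because two distinct points, being separated by some $f_m$, are separated by the $f_m$-preimages of disjoint open subsets of $\R$. Part (1) is then immediate: for compact $K\subset Z$, the map $F|_K\colon K\to F(K)$ is a continuous bijection from a compact space onto a subspace of the Hausdorff space $\R^\N$, hence a homeomorphism, so $K$ is homeomorphic to a subspace of the metrizable space $\R^\N$ and is metrizable.

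For part (2), I would push the measures forward, $\nu_m:=\mu_m\circ F^{-1}$ on $\B(\R^\N)$. Tightness of $(\mu_m)$ gives, for each $n\in\N$, a compact $K_{1/n}\subset Z$ (which we may take increasing) with $\mu_m(K_{1/n})\ge1-1/n$ for all $m$; since $F(K_{1/n})$ is compact, hence closed, in $\R^\N$ and $\nu_m(F(K_{1/n}))\ge1-1/n$, the family $(\nu_m)$ is tight on the Polish space $\R^\N$. By Prokhorov's theorem a subsequence $\nu_{m_k}$ converges weakly to some probability measure $\nu$, and the portmanteau theorem yields $\nu(F(K_{1/n}))\ge\limsup_k\nu_{m_k}(F(K_{1/n}))\ge1-1/n$. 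Hence $\nu$ and all the $\nu_{m_k}$ are concentrated on the $\sigma$-compact Borel set $F(Z_0)=\bigcup_nF(K_{1/n})$, where $Z_0:=\bigcup_nK_{1/n}$ carries all the $\mu_{m_k}$. The classical Skorokhod theorem on $\R^\N$ then gives a probability space $(\widetilde\Omega,\widetilde\F,\widetilde\Prob)$ and $\R^\N$-valued random variables $Y_k$, $Y$ with $\Law(Y_k)=\nu_{m_k}$, $\Law(Y)=\nu$, $Y_k\to Y$ $\widetilde\Prob$-a.s., all taking values in $F(Z_0)$ a.s. Setting $\xi_k:=F^{-1}(Y_k)$, $\xi:=F^{-1}(Y)$ and using that $F$ is a homeomorphism on each $K_{1/n}$ (part (1)), one sees that $F^{-1}$ is Borel on $F(Z_0)$, so $\xi_k,\xi$ are $(\widetilde\F,S)$-measurable, and a short computation using the injectivity of $F$ and that $\mu_{m_k}$ is carried by $Z_0$ gives $\Law(\xi_k)=\mu_{m_k}$ (and $\xi$ carries the $Z$-weak limit of the $\mu_{m_k}$).

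The step I expect to be the main obstacle is the almost-sure convergence $\xi_k\to\xi$ in the topology of $Z$. Since $F^{-1}$ is only Borel, and not globally continuous on $F(Z_0)$, the convergence $Y_k\to Y$ in $\R^\N$ does not by itself transfer to $Z$; this is precisely where the \emph{uniformity} in the tightness hypothesis is needed. One has $\widetilde\Prob(Y_k\in F(K_{1/n}))=\mu_{m_k}(K_{1/n})\ge1-1/n$ uniformly in $k$, and $F^{-1}$ is continuous on each compact $F(K_{1/n})$; the point is to combine these with a localisation of the Skorokhod construction over the exhaustion $(K_{1/n})_n$ (e.g.\ by a diagonal argument) so that $\widetilde\Prob$-a.s.\ the limit $Y$ and the tail of the sequence $(Y_k)$ lie in a common $F(K_{1/n})$, on which $F^{-1}$ is continuous, whence $\xi_k\to\xi$ in $Z$. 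Carrying this out rigorously is the technical heart of the argument, and is the point where I would follow Jakubowski's original construction most closely.
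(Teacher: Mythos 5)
Note first that the paper does not prove this theorem at all: it is quoted as Theorem C.1 of \cite{BrOn10}, going back to Jakubowski \cite{Jak97}, so there is no in-paper argument to compare with; the question is only whether your sketch amounts to a proof. It does not, and the gap is exactly the one you flag yourself. The embedding $F=(f_m)_m:Z\to\R^\N$, the Hausdorff property, part (1), the pushforward $\nu_m=\mu_m\circ F^{-1}$, tightness of $(\nu_m)$, Prokhorov, and the measurability of $\xi_k=F^{-1}(Y_k)$ on the $\sigma$-compact set $\bigcup_n F(K_{1/n})$ are all fine, and this is indeed the skeleton of Jakubowski's argument. But the almost-sure convergence $\xi_k\to\xi$ in $Z$ is the entire content of the theorem, and the patch you propose --- use the uniform bounds $\widetilde\Prob(Y_k\in F(K_{1/n}))\ge 1-1/n$ together with continuity of $F^{-1}$ on each compact $F(K_{1/n})$, and ``localise'' the classical Skorokhod construction by a diagonal argument --- does not go through as stated. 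The bounds are uniform in $k$ but give no control on the joint event that the \emph{whole tail} of the sequence lies in one fixed $F(K_{1/n})$: Borel--Cantelli is unavailable ($\sum_k 1/n$ diverges), and $Y_k\to Y$ with $Y\in F(K_{1/n})$ does not force $Y_k\in F(K_{1/n})$ eventually, since the $Y_k$ may converge from outside the closed set, where $F^{-1}$ is uncontrolled (indeed $F^{-1}$ need not even be defined there). So nothing you have written yields $\xi_k\to\xi$ a.s.\ in the topology of $Z$, and no post-processing of the output of the classical Skorokhod theorem on $\R^\N$ can be expected to yield it, because that theorem gives no information about where the representing variables sit relative to the sets $F(K_{1/n})$ beyond their marginal laws.

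The missing idea in Jakubowski's proof is that the representation is \emph{constructed directly}, not obtained by pulling back a generic representation from $\R^\N$: after passing to a subsequence along which the restricted (sub-probability) measures $\mu_{m_k}(\,\cdot\,\cap K_{1/n})$ converge for every $n$, one decomposes each $\mu_{m_k}$ along the compact exhaustion and builds the random variables on $([0,1],\mathcal{B},\mathrm{Leb})$ in such a way that, on an explicit set of measure at least $1-1/n$, \emph{all} the variables $\xi_k$ and the limit $\xi$ take values in the single compact, metrizable set $K_{1/n}$, where convergence can then be checked. Reproducing that construction (or an equivalent one) is what a complete proof requires; deferring to it, as you do in your last sentence, leaves the theorem unproved.
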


The Aldous condition is mentioned in the tightness criterion of Theorem \ref{thm.tight},
and therefore we recall its definition.

\begin{definition}[Aldous condition]\label{def.ald}
Let $(X_n)_{n\in\N}$ be a sequence of stochastic processes on a complete
separable metric space $S$, defined on the probability space $(\Omega,\F,\Prob)$
with filtration $\Fil=(\F_{t})_{t\in[0,T]}$.
We say that $(X_n)_{n\in\N}$ satisfies the {\em Aldous condition} 
if and only if for any $\eps>0$, there exists $\eta>0$ 
such that for any $\delta>0$ and any sequence $(\tau_n)_{n\in\N}$
of $\Fil$-stopping times with $\tau_n\le T$, it holds that
$$
  \sup_{n\in\N}\sup_{0<\theta<\delta}\Prob\big\{d\big(X_n(\tau_n+\theta),
	X_n(\tau_n)\big)\ge\eta\big\} \le \eps.
$$
\end{definition}

\subsection{Vitali's convergence theorem}

We use the following version of Vitali's convergence theorem (which can be seen
as a special version of the theorem of De la Vall\'ee-Poussin).

\begin{theorem}[Vitali]
Let $(a_N)$ be a sequence of integrable functions on some probability space 
$(\Omega,\B(\Omega),\Prob)$ such that $a_N\to a$ a.e.\ as $N\to\infty$
(or $a_N\to a$ in measure) for some integrable function $a$ and there exist $r>1$ and
a constant $C>$ such that $\E|a_N|^r\le C$ for all $N\in\N$. Then
$\E|a_N|\to\E|a|$ as $N\to\infty$.
\end{theorem}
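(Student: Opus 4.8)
The plan is to prove the stronger assertion that $a_N\to a$ in $L^1(\Omega)$, since then $\big|\E|a_N|-\E|a|\big|\le\E|a_N-a|\to 0$ gives the claim. Because $\Prob$ is a probability measure, almost-everywhere convergence implies convergence in measure, so it suffices to treat the case $a_N\to a$ in measure. The engine of the proof is to upgrade the uniform bound $\E|a_N|^r\le C$ with $r>1$ into uniform integrability of the family $(a_N)$, and then to combine uniform integrability with convergence in measure by a truncation argument.

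First I would establish uniform integrability. For any measurable set $A\in\B(\Omega)$, Hölder's inequality with conjugate exponent $r'=r/(r-1)$ yields
\[
  \int_A|a_N|\,d\Prob\le\Big(\int_A|a_N|^r\,d\Prob\Big)^{1/r}\Prob(A)^{1/r'}
	\le C^{1/r}\Prob(A)^{1/r'},
\]
uniformly in $N$; hence $\sup_N\int_A|a_N|\,d\Prob\to 0$ as $\Prob(A)\to 0$, which is exactly uniform integrability. Integrability of the limit follows from Fatou's lemma: extracting from $a_N\to a$ in measure an almost-everywhere convergent subsequence $a_{N_k}\to a$, we get $\E|a|^r\le\liminf_k\E|a_{N_k}|^r\le C$, so that $a\in L^r(\Omega)\subset L^1(\Omega)$.

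For the convergence itself I would use the truncation $T_M(z)=\max\{-M,\min\{M,z\}\}$ and the decomposition
\[
  \E|a_N-a|\le\E\big|a_N-T_M(a_N)\big|+\E\big|T_M(a_N)-T_M(a)\big|
	+\E\big|T_M(a)-a\big|.
\]
Since $|a_N-T_M(a_N)|\le|a_N|\mathbf{1}_{\{|a_N|>M\}}$ and $\Prob\{|a_N|>M\}\le C/M^r\to 0$ uniformly in $N$ by Chebyshev's inequality, the first term is bounded uniformly in $N$ by $\sup_N\int_{\{|a_N|>M\}}|a_N|\,d\Prob$, which is small for large $M$ by the uniform integrability just proved; the third term is small for large $M$ because $a\in L^1(\Omega)$. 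For the middle term, $T_M$ is bounded and continuous, so $T_M(a_N)\to T_M(a)$ in measure with all functions dominated by $M$, and the bounded convergence theorem for convergence in measure gives $\E|T_M(a_N)-T_M(a)|\to 0$ as $N\to\infty$ for each fixed $M$.

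Putting these together, I would fix $\eta>0$, choose $M$ so large that the first and third terms are each below $\eta/3$ uniformly in $N$, and then let $N\to\infty$ to make the middle term below $\eta/3$; this gives $\limsup_N\E|a_N-a|\le\eta$ for every $\eta>0$, hence $\E|a_N-a|\to 0$. The argument is entirely routine; the only point requiring care is that the tail contribution $\int_{\{|a_N|>M\}}|a_N|\,d\Prob$ be controlled uniformly in $N$, and this is precisely what the higher-moment hypothesis $r>1$ provides through the Hölder estimate above. The bounded convergence step for convergence in measure can itself be justified by the subsequence principle, namely that every subsequence admits a further almost-everywhere convergent one, to which the ordinary dominated convergence theorem applies.
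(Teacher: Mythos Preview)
Your proof is correct and follows the standard route: upgrade the uniform $L^r$ bound with $r>1$ to uniform integrability via H\"older, then combine uniform integrability with convergence in measure through a truncation argument to obtain $L^1$ convergence. Each step is sound, including the use of Fatou along an a.e.\ convergent subsequence to place the limit $a$ in $L^r$, and the subsequence principle to justify bounded convergence under convergence in measure.

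There is nothing to compare against, however: the paper does not prove this theorem. It is stated in the appendix as a known result from measure theory (a special case of the de la Vall\'ee--Poussin criterion combined with the classical Vitali convergence theorem) and is simply invoked as a tool in the proof of Lemma~\ref{lem.E}. Your write-up would serve perfectly well as the omitted proof.
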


\end{appendix}


\end{document}